\newtheorem{definition}{Definition}
\newtheorem{remark}{Remark}
\newtheorem{proposition}{Proposition}
\newtheorem{corollary}{Corollary}
\newtheorem{theorem}{Theorem}
\newtheorem{lemma}{Lemma}
\newcommand{\be}[1]{\begin{equation}\label{eq:#1}}
\newcommand{\ee}{\end{equation}}
\newcommand{\void}[1]{}
\def\N {\mathbb{N}}
\def\Z {\mathbb{Z}}
\def\C {\mathbb{C}}
\def\id {\mathrm{id}}
\def\Obj {\mathrm{Obj}}
\def\Mor {\mathrm{Mor}}
\def\Hom {\mathrm{Hom}}
\def\oti {\otimes}
\def\End {\mathrm{End}}
\def\2cat {\mathcal{K}}
\def\PoincDual {\xymatrix{&\ar@{|->}[l] \ar@{|->}[r]&}}
\def\cC {\ensuremath{\mathcal{C}}}
\def\one {{1\!\!1}}
\def\im {\mathrm{Im}}
\def\extS {\ensuremath{\mathrm{X}}}
\def\eCob {\ensuremath{\mathrm{M}}}
\def\bl {\ensuremath{\mathcal{H}}}
\def\F {\ensuremath{\mathcal F}}
\begin{document}

\title{Reducibility of quantum representations of mapping class groups}

\author{J\o rgen Ellegaard Andersen and Jens Fjelstad}

\maketitle

\begin{abstract}In this paper we provide a general condition for the reducibility of the Reshetikhin-Turaev quantum 
representations of the mapping class groups. Namely, for any modular tensor category with a special symmetric Frobenius algebra
with a non-trivial genus one partition function, we prove that the quantum representations of all the mapping class groups 
built from the modular tensor category are reducible. In particular for $SU(N)$ we get reducibility
for certain levels and ranks. For the quantum $SU(2)$ Reshetikhin-Turaev theory we construct a decomposition for all even levels. We conjecture
this decomposition is a complete decomposition into irreducible representations for high enough levels.
\end{abstract}

\newpage

One of the features of three dimensional TQFT, as defined first by Witten, Atiyah and Segal in \cite{W1}, \cite{Atiyah} and \cite{Segal}, and
 further made precise by Reshetikhin and Turaev in \cite{RT1}, \cite{RT2}, is that it provides finite dimensional representations of mapping class groups of compact orientable surfaces, possibly with marked points. More precisely, a TQFT based on a modular category $\cC$ with ground field $k$ associates finite dimensional $k$-vector spaces $\bl(\extS)$ to surfaces $\extS$, and linear isomorphisms $\rho(f):\bl(\extS)\rightarrow\bl(\extS')$ to orientation preserving homeomorphisms $f:\extS\rightarrow \extS'$ depending only on the mapping class $[f]$. The assignment $f\mapsto \rho(f)$ is quasi-functorial, where the failure of functoriality is measured by a non-zero multiplicative factor, with the end result that $\rho:\mathrm{MC}(\extS)\rightarrow\End(\bl(\extS))$ becomes a projective representation. See \cite{T1} for a complete treatment of this.

Another machine that produces finite dimensional projective representations of mapping class groups is rational conformal field theory \cite{MS, FrS}. Furthermore, given a rational CFT with chiral algebra $\mathcal{V}$, it is expected that the corresponding representation of $\mathrm{MC}(\extS)$ is isomorphic to the representation given by a TQFT based on the modular category $\mathrm{Rep}(\mathcal{V})$. With some assumptions on $\mathcal{V}$ the statement has been shown to be true for genus $0$ and genus $1$, but is still open for higher genus.

Apart from some special cases, not much is known about the (ir-)reducibility these representations. For the rest of the paper we focus on surfaces with no marked points.\\

\noindent
In the Reshetikhin-Tureav TQFT for $U_q(sl(2,\C))$ Roberts has shown, with the use of the skein theoretical construction of this Resetikhin-Turaev TQFT by Blanchet, Habegger, Masbaum and Vogel \cite{BHMV1}, \cite{BHMV2}, that for $k+2$ prime the representations are irreducible for any genus $g\geq 1$~\cite{Rob}. We recall that this result played a key role in proving that the mapping class groups does not have Kazhdan's Property T \cite{A2}.

\noindent
The method used to classify modular invariant torus partition functions in the $SU(2)$ WZW models of CFT presented in \cite{GepQ} confirm this result for genus $1$, since it is shown that the commutant of the representation is trivial when $k+2$ is prime. It is furthermore shown that the commutant is non-trivial for all other (integer) values of $k>1$, and the corresponding representations are reducible.\\

\noindent
The TQFT representations of the mapping class group of a surface of genus $1$ (i.e. of $SL(2,\Z)$) are rather special since they factor through the finite group $SL(2,\Z/N\Z)$ \cite{CosteGannon,Bantay}. It has been shown in some detail \cite{Kerler} how the representations decompose for $g=1$ in the $SU(3)$ theory at level $k$ with $k+3$ prime and $k+3\equiv 2 \mathrm{mod} 3$. Furthermore the same reference contains the result that the representations in the $SU(N)$ theory at level $k$ where $N>2$, $k>N$, and with $k+N$ and $N$ coprime are reducible for all genus $g\geq 1$.\\

\noindent
A rational CFT with chiral algebra $\mathcal{V}$ is defined by a so called symmetric special Frobenius algebra in the modular category $\mathrm{Rep}(\mathcal{V})$. By means of the Frobenius algebra, a CFT assigns to a closed oriented surfaces $\extS$, elements in $\bl(\extS)\otimes_\C\bl(\extS)^*$ (correlators) commuting with the TQFT representation of $\mathrm{MC}(\extS)$ on $\bl(\extS)$. The most familiar of these correlators are the modular invariant torus partition functions, which are elements of $\bl(T^2)\otimes_\C\bl(T^2)^*$. In a (canonically given) basis $\{\chi_i\}_i$ of $\bl(T^2)$, torus partition functions are represented by matrices $Z(A)=(Z_{ij}(A))$, determined by the Frobenius algebra $A$.
We say that this partition function is trivial, if $Z(A)$ is proportional to the the identity. We use techniques from rational CFT to investigate the TQFT representations of mapping class groups of compact closed orientable surfaces $\extS_g$ of genus $g\geq 1$, without marked points.

The main results are presented in two theorems:

\begin{theorem}
Let \cC{} be a modular tensor category.
If there exists a special symmetric Frobenius algebra $A$ in \cC{} such that $Z(A)$ is not trivial, then the TQFT representation of the mapping class group of $\extS_g$ on $\bl(\extS_g)$ is reducible for every $g\geq 1$.
\label{thm:reducibility}
\end{theorem}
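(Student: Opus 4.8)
\emph{Strategy.} The plan is to exploit the one structural fact recalled above: the full rational CFT built from $A$ assigns to $\extS_g$ a correlator $X_g(A)\in\bl(\extS_g)\otimes_\C\bl(\extS_g)^*\cong\End(\bl(\extS_g))$ that \emph{commutes} with every operator $\rho(f)$, $f\in\mathrm{MC}(\extS_g)$, and which for $g=1$ is exactly the torus partition function $Z(A)$. I would first reduce the theorem to a statement about this single operator: if $\rho$ were irreducible, then the set $\{\rho(f)\}$ would admit no common invariant subspace, so by Burnside's theorem the algebra it generates is all of $\End(\bl(\extS_g))$ and its commutant is $\C\cdot\Id$. (Projectivity of $\rho$ is harmless here, since the $\rho(f)$ are honest operators and we only use that $X_g(A)$ commutes with each of them.) Hence it suffices to prove that $X_g(A)$ is \emph{not} a scalar multiple of the identity whenever $Z(A)$ is non-trivial.

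\emph{Base case.} For $g=1$ there is nothing to do: $X_1(A)=Z(A)$ is non-scalar by hypothesis, so $\rho$ is already reducible on $\bl(T^2)$.

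\emph{Propagating non-triviality in the genus.} For $g\ge 2$ I would use the sewing/factorization property of CFT correlators. Cutting $\extS_g$ along a separating simple closed curve into a one-holed torus and a one-holed surface of genus $g-1$ yields the TQFT factorization $\bl(\extS_g)\cong\bigoplus_{k}\bl(\extS_{1};k)\otimes\bl(\extS_{g-1};k^\vee)$, the sum running over representatives of the simple objects $k$ of \cC. The correlator $X_g(A)$ is compatible with this gluing, and in the sewing-adapted basis its matrix elements can be expressed through the corresponding correlators on the two pieces together with the structure constants of $A$. On the sector where the separating curve carries the tensor unit one has $\bl(\extS_1;\one)\cong\bl(T^2)$ and the one-holed-torus correlator is precisely $Z(A)$; tracking this block one finds matrix elements of $X_g(A)$ that reproduce those of $Z(A)$ up to nonzero factors. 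Thus a nonzero off-diagonal entry, or a non-constant diagonal, of $Z(A)$ forces a corresponding non-scalar behaviour of $X_g(A)$, and irreducibility is excluded.

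\emph{Main obstacle.} The real work is this last step: making the factorization formula for the higher-genus \emph{full} correlator precise and showing that the non-triviality of $Z(A)$ genuinely survives the gluing. Concretely one must (i) identify the exact block of $X_g(A)$ governed by $Z(A)$, and (ii) verify that the accompanying coefficients, built from the special symmetric Frobenius structure of $A$ via the fusion and sewing data, do not vanish, so that they cannot conspire to cancel the non-scalar part of $Z(A)$. This is where the hypotheses that $A$ be \emph{special} and \emph{symmetric} should enter, guaranteeing that the relevant normalisations and counits are invertible; I expect this non-vanishing to be the crux of the argument.
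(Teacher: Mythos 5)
Your setup coincides with the paper's: the operator you call $X_g(A)$ is exactly the element $P_g=Z(\extS_g\times[-1,1]$ with a dual triangulation labelled by $A)$ that the paper constructs, its commutation with $\rho(f)$ is Theorem~2.2 of \cite{FjFRS1}, and the reduction via Schur/Burnside to ``$P_g$ is not a scalar'' is the same. The genus-one case is likewise identical. But the proof stops where the theorem actually lives: you have not shown that non-triviality of $Z(A)$ survives to $g\geq 2$, and you say so yourself (``I expect this non-vanishing to be the crux''). That step is not a routine verification along the route you sketch. Bulk factorization of the full CFT correlator along a separating curve is not a block-diagonal decomposition with a distinguished ``tensor unit sector'': the sewing formula sums over intermediate \emph{bulk fields}, i.e.\ over pairs of simple objects weighted by the very multiplicities $Z_{ij}$ you are trying to detect, and the operator $P_g$ is not adapted to the TQFT decomposition $\bl(\extS_g)\cong\bigoplus_k\bl(\extS_1;k)\otimes\bl(\extS_{g-1};k^\vee)$ because the triangulation carrying $A$ crosses the cutting curve. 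So identifying ``the block of $X_g(A)$ governed by $Z(A)$'' and proving its coefficients nonzero is not easier than the theorem itself.

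The paper closes this gap differently and more concretely. After reducing (via the semisimplicity/Morita argument) to $A$ haploid, it picks a simple $U_i$ with $E^l_A(U_i)\ncong U_i$ and nonzero, and evaluates $P_g$ on the single handlebody basis vector $v^g_i$ whose ribbon graph has all labels trivial except one ribbon labelled $U_i$. Writing each idempotent $P^l_A$ and $P^l_A(U_i)$ as $e\circ r$ through its image turns $P_g(v^g_i)$ into a graph labelled by the left center $C_l(A)$ and by $E^l_A(U_i)$; decomposing $C_l(A)$ into simple subobjects, haploidity guarantees a \emph{unique} component in which every $C_l(A)$-ribbon carries $\one$, and unitality (equation \eqref{eq:unitcomp}) forces that component's coefficient to be a nonzero $\xi$, contributing $\xi v^g_{E^l_A(U_i)}$; uniqueness ensures the remaining terms cannot cancel it. This is exactly the non-vanishing you flagged as the obstacle, and it is obtained by an explicit local computation with the Frobenius structure rather than by a sewing argument. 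As it stands, your proposal correctly identifies the commuting operator and the logical skeleton but leaves the essential analytic content of the theorem unproved, and the factorization route you propose would need substantial additional work to be made to function.
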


\begin{remark}
We expect that the qualifier ''$Z(A)$ is not trivial'' can be replaced by requiring that the symmetric special Frobenius algebra not Morita equivalent to $\one$. As we will see (cf remark~\ref{rem:morita}) our present proof does not allow us to do this. It should be mentioned, however, that there is no known example of an algebra that yields the diagonal modular invariant torus partition function, but at the same time is not Morita equivalent to $\one$.
\end{remark}
If we include in the set of symmetric special Frobenius algebras in modular categories also examples arising as canonical endomorphisms in nets of type $III_1$ subfactors on the circle, we get the following

\begin{corollary}
The TQFT representations are reducible for all genus $g\geq 1$ in the theories based on 

$SU(2)$ for all levels $k\in 2\N$, $k\geq 4$

$SU(3)$ for all levels $k\in \N$, $k\geq 3$

$SU(N)$ for $N=mq$, $m,q\in\N$, $m>1$ for level $k$ such that $kq\in 2 m\N$ if $N$ is even and $kq\in m\N$ if $N$ is odd.

\end{corollary}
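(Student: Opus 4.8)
The plan is to apply Theorem~\ref{thm:reducibility} case by case: for each listed theory and level it suffices to exhibit, inside the modular tensor category $\cC=\cC(SU(N)_k)$ underlying the Reshetikhin--Turaev theory, one special symmetric Frobenius algebra $A$ whose torus partition function $Z(A)$ is not proportional to the identity. Since any modular invariant $Z$ has non-negative integer entries with vacuum entry $Z_{00}=1$, a scalar multiple of the identity would force $Z=\Id$, the diagonal invariant; hence it is enough to produce in each case an $A$ realizing a \emph{non-diagonal} modular invariant.

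The principal construction is the simple current extension. Writing $J$ for the order-$N$ simple current of $SU(N)_k$ and $L=J^{q}$ for the generator of the subgroup of order $m$ inside $\Z/N\Z$ (recall $N=mq$), the orbit object $A=\bigoplus_{j=0}^{m-1}L^{\,j}$ carries the structure of a connected commutative special symmetric Frobenius algebra precisely when all its summands are bosonic, i.e. when $L$ has integer conformal weight. Using $h_{L^{\,j}}=\tfrac{kq}{2m}\,j(m-j)$ together with the exact identity $h_{L^{\,j}}=\tfrac{kq}{2}\,j(1-j)+j^{2}h_L$, whose first term is an integer because $j(j-1)$ is even, one sees that $h_L\in\Z$ already forces every $h_{L^{\,j}}\in\Z$ and makes all relevant monodromies integral. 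A short divisibility computation then shows that $h_L=\tfrac{kq(m-1)}{2m}\in\Z$ is equivalent to $2m\mid kq$ when $N$ is even and to $m\mid kq$ when $N$ is odd, which are exactly the stated conditions. The associated $Z(A)$ is the corresponding simple current invariant, in which the vacuum already couples to the non-trivial currents $L^{\,j}$, so $Z(A)$ is non-diagonal. This settles the general $SU(N)$ statement, and in particular the cases $3\mid k$ for $SU(3)$ and $4\mid k$ for $SU(2)$.

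To reach the remaining levels---every even $k\geq 4$ for $SU(2)$ and every $k\geq 3$ for $SU(3)$---one enlarges the supply of algebras as indicated just before the statement, admitting the canonical endomorphisms (Q--systems) of braided subfactors of the $III_1$ conformal nets on the circle associated to the loop group of $SU(N)$. Via $\alpha$--induction these Q--systems are precisely special symmetric Frobenius algebras in $\cC$, and they realize the complete lists of physical modular invariants: for $SU(2)$ the non-diagonal $D$--series (covering every even level $k\geq 4$, including the automorphism-type $D_{\mathrm{odd}}$ invariants at $k\equiv 2\bmod 4$) and the exceptionals, and for $SU(3)$ the simple current, charge conjugation (non-diagonal because $SU(3)$ has complex representations) and exceptional invariants, which together provide a non-diagonal $Z(A)$ for every $k\geq 3$. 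Each such invariant is non-diagonal, hence non-trivial, and Theorem~\ref{thm:reducibility} delivers reducibility for all $g\geq 1$.

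The main obstacle is not the arithmetic, which as above collapses to the integer-spin condition for a single simple current, but the \emph{realizability} input: one must know that each non-diagonal invariant on the list is genuinely the torus partition function of an honest special symmetric Frobenius algebra (equivalently a Q--system) living in the \emph{same} modular tensor category $\cC$ to which Theorem~\ref{thm:reducibility} is applied, and not merely a matrix commuting with the $SL(2,\Z)$ action. Securing this requires the classification of $SU(2)$ and $SU(3)$ modular invariants together with their subfactor realization, as well as the identification of the modular category produced by the loop group net with the Reshetikhin--Turaev category of the quantum group at the relevant root of unity. It is exactly these points that force the qualifier ``examples arising as canonical endomorphisms in nets of type $III_1$ subfactors'' to be added in the statement.
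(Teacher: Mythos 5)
Your overall strategy is exactly the paper's: the corollary is obtained by feeding Theorem~\ref{thm:reducibility} a symmetric special Frobenius algebra with non-diagonal $Z(A)$ in each case --- simple current invariants for $SU(N)$ (realized as algebras by \cite{FRS3} and as Q-systems by \cite{BE2}), the $D$/$E$ algebras of \cite{KO,Ostrik,BE} for $SU(2)$, and Ocneanu's announced subfactor realization of the classified $SU(3)$ invariants combined with Gannon's result that non-trivial ones exist for all $k\geq 3$. Your integer-spin computation $h_{L^j}=\tfrac{kq}{2m}j(m-j)$ correctly reproduces the stated divisibility conditions (the paper never spells this out), and your closing paragraph correctly isolates realizability in the same modular category as the essential, and weakest, input.

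One point needs repair. In this paper's conventions (second item of Remark~\ref{rem:morita}) the partition function is written in the basis $\{\chi_i\otimes\bar\chi_j\}$, so the \emph{charge conjugation} modular invariant is represented by the identity matrix: it is precisely $Z(\one)$ and is the \emph{trivial} one. It therefore cannot serve as your witness of non-triviality for $SU(3)$ at levels with $3\nmid k$. (The matrix $\delta_{i,\bar j}$, which is indeed not proportional to the identity for a category with complex objects, is in this basis the CFT-\emph{diagonal} invariant, and whether it is realized by an algebra is exactly the realizability question you flag --- it is not settled by exhibiting the matrix.) The levels $3\nmid k$, $k\notin\{5,9,21\}$ must instead be covered by the automorphism-type simple current invariants (the $D$-series of Gannon's classification, which exists for all $k\geq 3$ and is a Kreuzer--Schellekens invariant, hence realized by a symmetric special Frobenius algebra by \cite{FRS3}); your extension-type construction $\bigoplus_{j}L^{j}$ with $h_L\in\Z$ only produces the $3\mid k$ cases. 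This is the same mechanism that covers $SU(2)$ at $k\equiv 2\bmod 4$, which you do handle correctly via the $D_{\mathrm{odd}}$ algebras. With that substitution the argument is complete and coincides with the paper's.
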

The list is far from exhaustive, but contains the most interesting examples from the point of view of TQFT. Some other examples are listed in section~\ref{sec:reducibility}, the largest and most important class of which is all theories admitting simple current invariants~\cite{FRS3}. It is generally believed that most non-trivial modular invariants will be simple current invariants, so although some of the other invariants may be spurious (i.e. they will not arise as the torus partition function of any CFT, and thus won't be realised by a symmetric special Frobenius algebra), the theorem most likely implies that reducibility for all genus $g\geq 1$ is a rather generic feature of TQFT.\\

\noindent
In the $SU(2)$ case, the modular invariant torus partition functions have an ADE classification \cite{CaItZu}. In addition, the results in \cite{KO,Ostrik} can be said to give an ADE classification of the (Morita equivalence classes of) symmetric special Frobenius algebras in the modular categories $\cC_k$ of integrable dominant highest weight representations of the untwisted affine Lie algebra $\widehat{\mathfrak{su}}(2)$ at level $k$. These results are used to produce projectors $\Pi_\pm^{g,k,D/E}$ on the corresponding spaces $\bl(\extS_g)$, from which we can derive the following theorem.
 \begin{theorem}
 	Let $V^{g,k}\equiv(\bl(\extS_g),\rho_g)$ denote the representation of the mapping class group of $\extS_g$ given by the TQFT based on the modular category $\cC_k$, i.e. the Reshetikhin-Turaev TQFT for $U_q(sl(2,\C))$. If $k\in 2\Z_+$, $k\geq 4$, then have the following decomposition:
	\begin{align}
		V^{g,k}& =V^{g,k,D}_+\oplus V^{g,k,D}_-\equiv\im(\Pi^{g,k,D}_+)\oplus\im(\Pi^{g,k,D}_-).
	\end{align}
For $k=10, 16$, and $28$ we have further
	\begin{align}
		V^{g,k}& =V^{g,k,E}_+\oplus V^{g,k,E}_-\equiv\im(\Pi^{g,k,E}_+)\oplus\im(\Pi^{g,k,E}_-)\\
		&=V^{g,k,E}_+\oplus W^{g,k}\oplus V^{g,k,D}_-\equiv\im(\Pi^{g,k,E}_+)\oplus\im(\Pi^{g,k,D}_+\Pi^{g,k,E}_-)\oplus\im(\Pi^{g,k,D}_-).
	\end{align}
For any $g\geq 1$ and for any $k$ in the given range, the vector spaces underlying $V^{g,k,D/E}_\pm$, $W^{g,k}$ are non-zero.
\label{thm:su2}
\end{theorem}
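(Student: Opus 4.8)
The plan is to realise each projector as a Reshetikhin--Turaev invariant of the decorated cylinder $\extS_g\times[0,1]$ and then to reduce the decomposition statements to positivity of Verlinde-type sums. First I would construct, for every symmetric special Frobenius algebra $A$ in $\cC_k$, an operator $P_A\in\End(\bl(\extS_g))$: pick a triangulation of $\extS_g$, place the dual trivalent $A$-ribbon graph in the interior of $\extS_g\times[0,1]$ (multiplications at the vertices, the Frobenius coproduct where needed), and evaluate the resulting cobordism in the TQFT. The special and Frobenius axioms make $P_A$ idempotent and independent of the triangulation, exactly as in the construction of correlators in rational CFT. Triangulation independence, together with the fact that an orientation preserving homeomorphism carries one triangulation to another, forces $P_A$ to commute with $\rho_g(f)$ for every $f$; hence $P_A$ is an intertwiner. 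Applying this to the ADE algebras of \cite{KO,Ostrik} --- namely $\one$ (type $A$), $A_D=\one\oplus J$ with $J$ the order-two simple current (type $D$, which exists precisely for even $k$), and the exceptional algebras $A_{E_6},A_{E_7},A_{E_8}$ --- produces the operators from which the $\Pi^{g,k,D/E}_\pm$ are assembled.

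Next I would pin down the $\pm$ splitting. For type $D$ the algebra $\one\oplus J$ is the group algebra of $\Z/2$, so its projector is the averaging operator $\tfrac12(\Id+U_J)$, where $U_J$ is the involution induced by inserting a $J$-ribbon; evenness of $k$ is what makes $U_J^2=\Id$ non-anomalously. I would therefore set $\Pi^{g,k,D}_\pm=\tfrac12(\Id\pm U_J)$, so that $\Pi^{g,k,D}_++\Pi^{g,k,D}_-=\Id$ and the splitting $V^{g,k}=\im(\Pi^{g,k,D}_+)\oplus\im(\Pi^{g,k,D}_-)$ is immediate. For types $E_6,E_8$ I would take $\Pi^{g,k,E}_+=P_{A_E}$ and $\Pi^{g,k,E}_-=\Id-P_{A_E}$; for $E_7$ (which is of automorphism rather than extension type, built from the $k=16$ type-$D$ data) the same recipe applies once the corresponding idempotent is identified.

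The refined decomposition then rests on a compatibility statement: $\Pi^{g,k,D}_\pm$ and $\Pi^{g,k,E}_\pm$ commute and $\im(\Pi^{g,k,D}_-)\subseteq\im(\Pi^{g,k,E}_-)$, equivalently $\Pi^{g,k,E}_+\Pi^{g,k,D}_-=0$. Granting this, $\Pi^{g,k,D}_+\Pi^{g,k,E}_-$ is an idempotent and $\im(\Pi^{g,k,E}_-)=\im(\Pi^{g,k,D}_+\Pi^{g,k,E}_-)\oplus\im(\Pi^{g,k,D}_-)$, giving the stated three-term splitting with $W^{g,k}=\im(\Pi^{g,k,D}_+\Pi^{g,k,E}_-)$. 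The commutation and nesting I would derive from the algebraic relations between $A_D$ and $A_E$ recorded in \cite{KO,Ostrik}; for $E_8$ this is transparent, since $J=28$ is literally a summand of $A_{E_8}$, so $A_D\subset A_{E_8}$.

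Finally, non-vanishing. Since each $\Pi$ is an intertwining idempotent, $\dim V^{g,k,\bullet}_\pm=\mathrm{tr}_{\bl(\extS_g)}\Pi$ is computed by the TQFT invariant of $\extS_g\times S^1$ with the appropriate ribbon inserted along the $S^1$. For type $D$ this yields $\dim V^{g,k,D}_\pm=\tfrac12\sum_i\bigl(1\pm\tfrac{S_{Ji}}{S_{0i}}\bigr)(S_{0i})^{2-2g}$; using $S_{Ji}/S_{0i}=(-1)^i$ for $SU(2)_k$ this equals $\sum_{i\ \mathrm{even/odd}}(S_{0i})^{2-2g}$, a sum of strictly positive terms, so both summands are non-zero for every $g\geq1$ (both parities occur in $\{0,\dots,k\}$). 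For the $E$-summands one obtains an analogous Verlinde sum $\sum_i n_i(S_{0i})^{2-2g}$ with genus-independent coefficients $n_i$ fixed by the Dynkin exponents, and non-vanishing follows because the genus-one value (read off from the nontrivial matrix $Z(A_E)$) is already positive while the factors $(S_{0i})^{2-2g}$ stay positive. I expect the main obstacle to be twofold and concentrated in the $E$-case: establishing the commutation and nesting of $\Pi^{g,k,D}$ and $\Pi^{g,k,E}$ at all genus (a genuinely categorical input about the exceptional algebras, not merely a genus-one modular-invariant statement), and verifying strict positivity of the refined piece $W^{g,k}$ for every $g$, which requires showing the relevant coefficient sum is nonzero rather than merely non-negative.
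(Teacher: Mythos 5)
Your overall strategy --- intertwiners $P_A$ obtained by evaluating the $A$-labelled dual triangulation in $\extS_g\times[0,1]$, MCG-equivariance from triangulation independence, then assembling projectors from the ADE algebras --- is exactly the paper's. But the way you actually build the projectors has a genuine error in the type-$D$ case. You set $\Pi^{g,k,D}_\pm=\tfrac12(\Id\pm U_J)$ with $U_J$ ``the involution induced by inserting a $J$-ribbon.'' First, a $J$-ribbon inserted along a chosen cycle is not an intertwiner: the mapping class group moves the cycle, so only the full triangulation sum (which expands into a sum over essentially all of $H^1(\extS_g,\Z/2)$ worth of $J$-configurations) commutes with $\rho_g$. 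Second, and independently, the behaviour of $P_g[D]$ depends on $k\bmod 4$: the paper derives from the Morita-class fusion $[D]\times[D]=2[D]$ (for $k\equiv 0$) versus $[D]\times[D]=[A]$ (for $k\equiv 2$), together with $P_g[A\boxtimes B]=P_g[A]\circ P_g[B]$ and the Morita rescaling $P_g[A]=\gamma^{-\chi(\extS_g)/2}P_g[B]$, that $P_g[D]$ is proportional to an \emph{idempotent} when $k\equiv 0\bmod 4$ and to an involution only when $k\equiv 2\bmod 4$; the two cases get different-looking projectors. Your Verlinde-sum dimension formula, based on the eigenvalues $S_{Ji}/S_{0i}=(-1)^i$, computes the eigenspaces of fusion by $J$ and gives the wrong answer for $k\equiv 0\bmod 4$ (e.g.\ at $g=1$, $k=4n$ it yields $2n+1$, whereas $\tfrac12\mathrm{tr}\,Z(D)=n+1$).

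The other two load-bearing steps are identified but not supplied. The nesting $\Pi^{g,k,D}_+\Pi^{g,k,E}_+=\Pi^{g,k,E}_+$ is precisely what the paper extracts from the rig relations $[D]\times[E]=[E]$ (resp.\ $2[E]$) via Proposition~\ref{prop:Pmultadd} and Proposition~\ref{prop:PMorita}; your remark that $A_D$ is a subobject of $A_{E_8}$ does not by itself yield any relation between $P_g[D]$ and $P_g[E]$, and for $E_6,E_7$ there is no such inclusion to lean on. Likewise your recipe $\Pi^{g,k,E}_+=P_{A_E}$, $\Pi^{g,k,E}_-=\Id-P_{A_E}$ fails for $E_7$: since $[E]\times[E]=[D]+[E]$ at $k=16$, $P_g[E]$ is not proportional to an idempotent, and the paper's $\Pi^{g,16,E}_+$ is a specific linear combination of $P_g[E]$ and $\Pi^{g,16,D}_+$ (it also needs the nontrivial normalisations $P_g[E]^2=cP_g[E]$ with $c\neq 1$ for $E_6,E_8$). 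Finally, for non-vanishing the paper uses that $P_g[D]\not\propto\Id$ (from the proof of Theorem~\ref{thm:reducibility}) for the $D$-pieces and explicit rank computations for $W^{g,k}$ and the $E_7$ pieces; your positivity argument for the $E$- and $W$-summands is only sketched and, as you note yourself, the strict positivity of the $W^{g,k}$ coefficient sum at all genera is exactly the point that still has to be checked.
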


\noindent
We find it very interesting to understand if this decomposition is further reducible or not for genus $g\geq 2$. In fact for high enough level, we conjecture this decomposition can not be further decomposed. 
 
\noindent
We also find it very interesting to try to understand the above decompositions in terms of the geometric model (see e.g. \cite{ADW}, \cite{H1}, \cite{A1}, \cite{A3} and references in there) for these TQFT's. In particular we expect that the above decomposition agrees with the one found in \cite{AM}. Furthermore, the higher genus zero decompositions given by Blanchet in \cite{B1} might also be related to the decompositions obtainable from the techniques introduced in this paper.

\section{Preliminaries}\label{sec:prel}
In this section we gather some preliminary results, and establish conventions and notation for working in a ribbon category. The conventions and notation coincide to large extent with those of references \cite{FRS1,FrFRS,FjFRS1,FjFRS2}, which we refer to for more extensive discussions.

\subsection{Conventions and Notation for Ribbon Categories}\label{ssec:conventions}
Let \cC{} be a strict abelian $\C$-linear semisimple ribbon category with a finite number of isomorphism classes of simple objects. In practice we will be interested in modular categories, but the maximal non-degeneracy of the braiding will not play any role in this paper\footnote{Note that a modular category in this sense is more restrictive than in the original sense, but includes all modular tensor categories obtained in rational conformal field theory.}.
With this definition, \cC{} is automatically idempotent complete (Karoubian), i.e. every idempotent is split: let $p\in\End(U)$ be an idempotent, $p\circ p=p$, then $p$ is called split if there is a triple $(V,e,r)$ with $V$ an object, $e\in\Hom(V,U)$ a monic, and $r\in\Hom(U,V)$ a morphism s.t. $r\circ e=\id_{V}$ and $e\circ r=p$, i.e. $(V,e,r)$ is a {\em retract}. The monic $e$ is $\im(p)$ in the categorical sense, and by abuse of notation we say $\im(p)=V$.
We indicate that $V$ is a subobject of $U$ by writing $V\prec U$, and the morphisms of a retract $(V,e,r)$ of $U$ are denoted $e_{V\prec U}$ respectively $r_{V\prec U}$. \\

We choose a representative of every isomorphism class of simple objects and denote these by $U_i$, $i\in I$, where $U_0=\one$, the tensor unit.
The braiding isomorphisms are denoted $c_{U,V}\in\Hom(U\oti V,V\oti U)$. The twist isomorphisms are $\theta_U\in\End(U)$, and for a simple object $U_i$ we write $\theta_{U_i}=\theta_i\id_{U_i}$ where $\theta_i\in\C^\times$. Evaluation and coevaluation morphisms of the right duality are denoted by $d_U$ and $b_U$ respectively, while those for the left duality are denoted $\tilde{d}_U$ resp. $\tilde{b}_U$.\\

We will make frequent use of graphical calculus in \cC, let us therefore fix some conventions regarding diagrams representing morphisms. All diagrams are to be read upwards, from bottom to top.
Morphisms $e$ and $r$ of a retract are drawn as

\begin{figure}[h]
	\centering
	\begin{picture}(100,70)(0,0)
		\put(0,0)		{\includegraphics[scale=0.4]{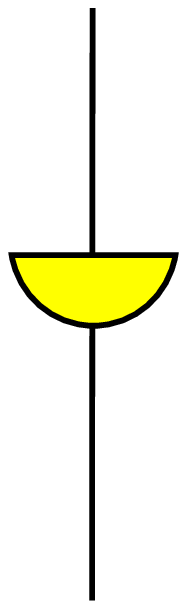}}
		\put(8,34)		{\scriptsize $e$}
		\put(120,0)	{\includegraphics[scale=0.4]{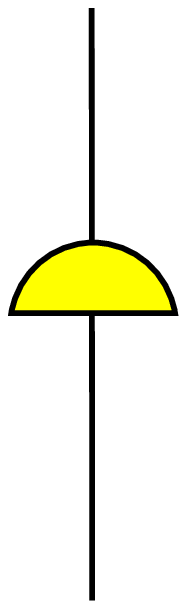}}
		\put(128,35)	{\scriptsize $r$}
		\put(6,-6)		{\scriptsize $V$}
		\put(126,-6)		{\scriptsize $U$}
		\put(6,70)		{\scriptsize $U$}
		\put(126,70)		{\scriptsize $V$}
	\end{picture}
\end{figure}
The various structural morphisms are pictured as
\begin{figure}[h]
	\centering
	\begin{picture}(200,80)
		\put(-30,20)	{$b_U=$}
		\put(-3,50)		{\scriptsize $U$}
		\put(28,50)	{\scriptsize $U^\vee$}
		\put(0,0)		{\includegraphics[scale=0.4]{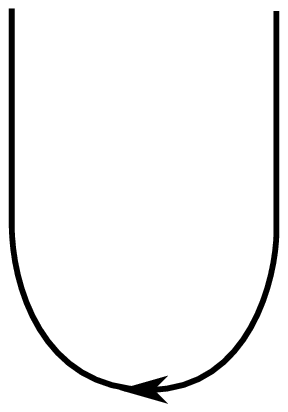}}
		\put(45,20)	{ $d_U=$}
		\put(77,-10)	{\scriptsize $U^\vee$}
		\put(108,-10)	{\scriptsize $U$}
		\put(80,0)		{\includegraphics[scale=0.4]{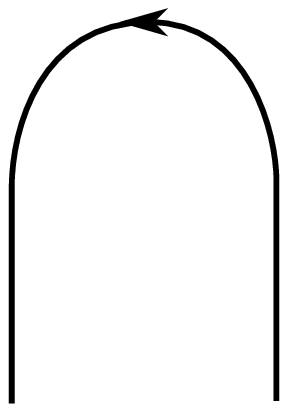}}
		\put(120,30)	{$c_{U,V}=$}
		\put(152,-10)	{\scriptsize $U$}
		\put(180,-10)	{\scriptsize $V$}
		\put(152,63)	{\scriptsize $V$}
		\put(180,63)	{\scriptsize $U$}
		\put(155,0)		{\includegraphics[scale=0.35]{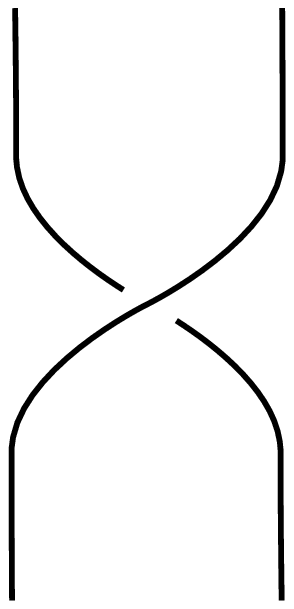}}
		\put(220,30)	{$\Theta_{U}=$}
		\put(253,-10)	{\scriptsize $U$}
		\put(253,63)	{\scriptsize $U$}
		\put(255,0)	{\includegraphics[scale=0.35]{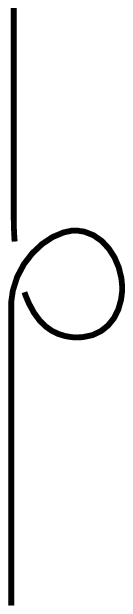}}
	\end{picture}
\end{figure}

with the obvious modifications for inverses of the isomorphisms and evaluation and coevaluation for the left duality.
We fix once and for all bases $\{\lambda_{(ij)k}^\alpha\}_\alpha$ of morphism spaces $\Hom(U_i\oti U_j,U_k)$ and dual bases  $\{\bar{\lambda}^{(ij)k}_{\bar{\alpha}}\}_{\bar{\alpha}}$ of $\Hom(U_k,U_i\oti U_j)$.
\begin{figure}[h]
	\centering
	\begin{picture}(400,100)
		\put(40,50)	{$\lambda_{(ij)k}^\alpha$}
		\put(104,52)	{$\alpha$}
		\put(70,50)	{$=$}
		\put(100,0)	{\includegraphics[scale=0.3]{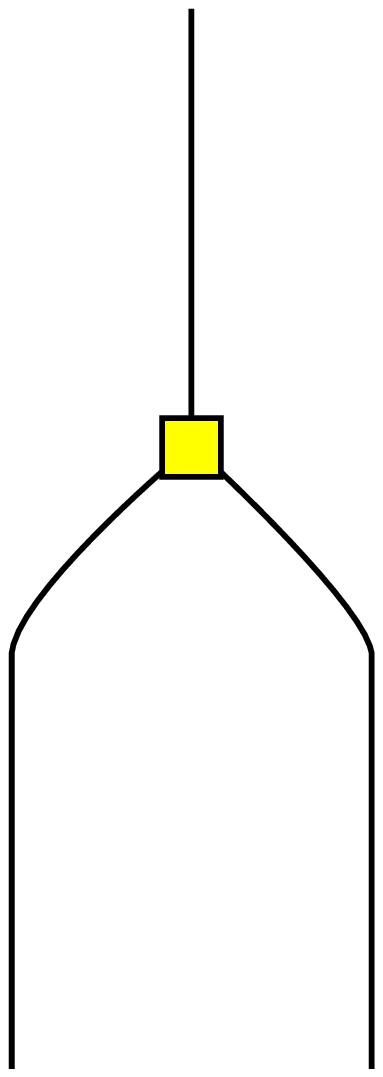}}
		\put(97,-8)	{\scriptsize $U_i$}
		\put(130,-9)	{\scriptsize $U_j$}
		\put(112,95)	{\scriptsize $U_k$}
		\put(240,50)	{$\bar{\lambda}^{(ij)k}_{\bar{\alpha}}$}
		\put(270,50)	{$=$}
		\put(312,-8)	{\scriptsize $U_k$}
		\put(298,95)	{\scriptsize $U_i$}
		\put(330,95)	{\scriptsize $U_j$}
		\put(298,36)	{$\bar{\alpha}$}
		\put(300,0)	{\includegraphics[scale=0.3]{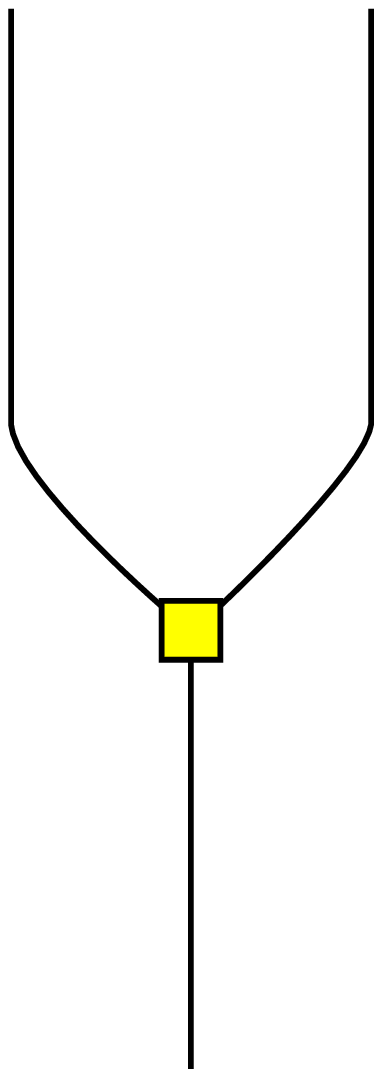}}
	\end{picture}
\end{figure}

\noindent
The meaning of dual bases is shown in the following figure.

	\begin{picture}(300,120)(-150,-10)
		\put(0,0)		{\includegraphics[scale=0.2]{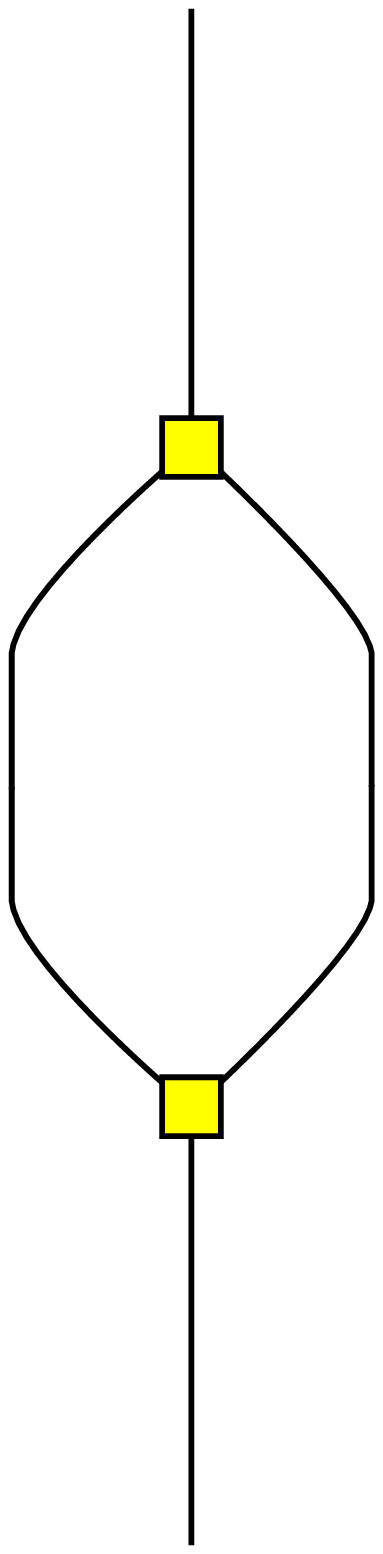}}
		\put(50,40)		{$=$}
		\put(100,0)	{\includegraphics[scale=0.4]{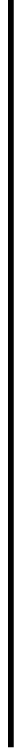}}
		\put(8,-8)		{\scriptsize $U_i$}
		\put(8,91)		{\scriptsize $U_i$}
		\put(-10,40)		{\scriptsize $U_j$}
		\put(25,40)	{\scriptsize $U_k$}
		\put(0,23)		{\small$\bar\alpha$}
		\put(0,60)		{\small$\beta$}
		\put(67,40)	{$\delta_{\alpha,\beta}$}
		\put(98,-8)		{\scriptsize $U_i$}
		\put(98,91)		{\scriptsize $U_i$}
	\end{picture}

When one of the simple objects involved is the tensor unit we further choose the morphisms as shown in figure~\ref{fig:adaptedbasis},
\begin{figure}
	\centering
	\begin{picture}(400,100)
	\put(40,50)		{$=$}
	\put(90,50)		{$=$}
	\put(-2,-8)			{\scriptsize $U_i$}
	\put(30,-8)		{\scriptsize $\one$}
	\put(13,95)		{\scriptsize $U_i$}
	\put(70,-8)		{\scriptsize $U_i$}
	\put(70,95)		{\scriptsize $U_i$}
	\put(112,-8)			{\scriptsize $\one$}
	\put(145,-8)		{\scriptsize $U_i$}
	\put(128,95)		{\scriptsize $U_i$}
	\put(250,0)		{\includegraphics[scale=0.3]{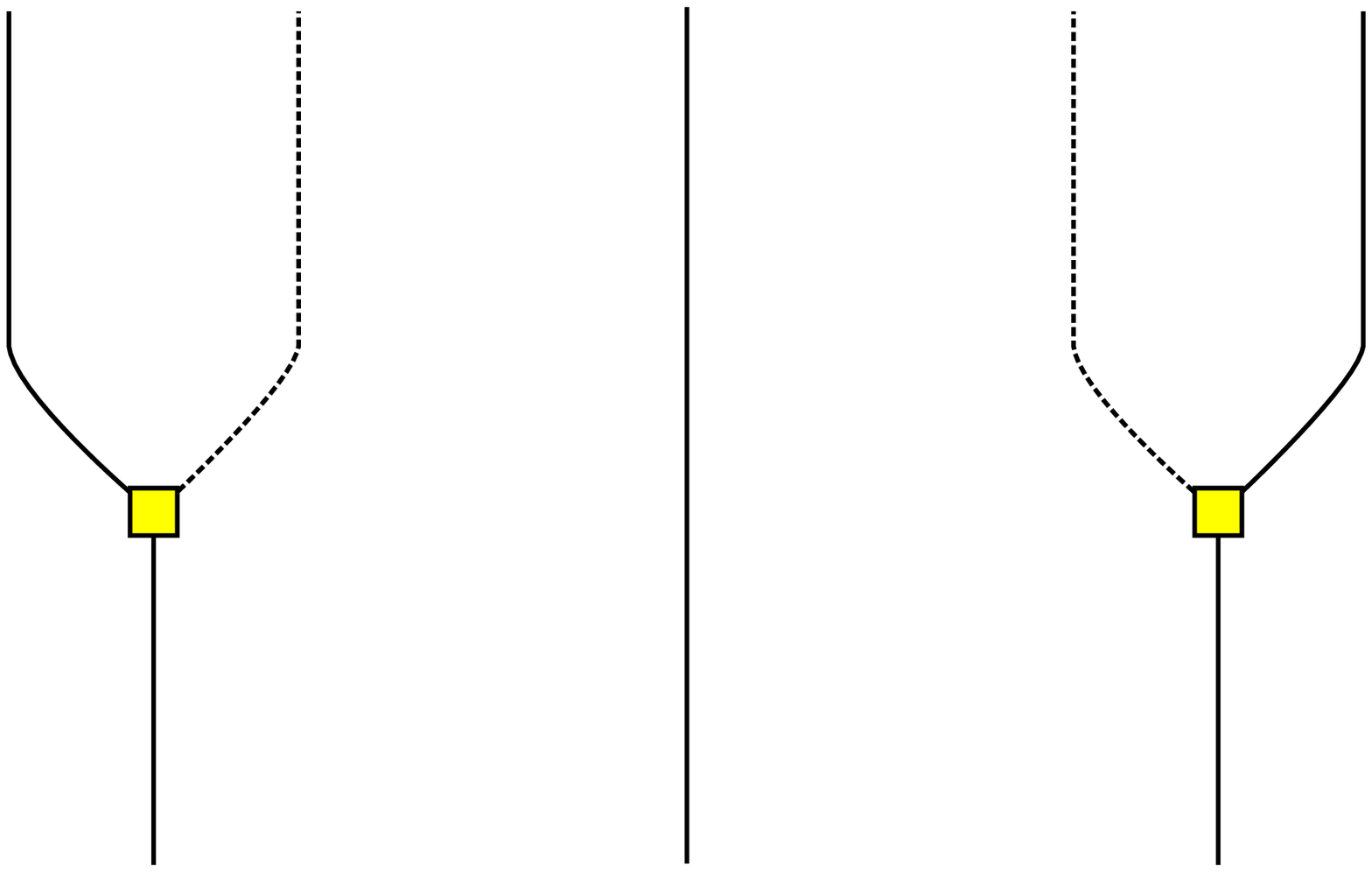}}
	\put(290,50)		{$=$}
	\put(340,50)		{$=$}
	\put(263,-8)		{\scriptsize $U_i$}
	\put(248,95)		{\scriptsize $U_i$}
	\put(280,95)		{\scriptsize $\one$}
	\put(320,-8)		{\scriptsize $U_i$}
	\put(320,95)		{\scriptsize $U_i$}
	\put(378,-8)		{\scriptsize $U_i$}
	\put(363,95)		{\scriptsize $\one$}
	\put(393,95)		{\scriptsize $U_i$}
	\put(0,0)			{\includegraphics[scale=0.3]{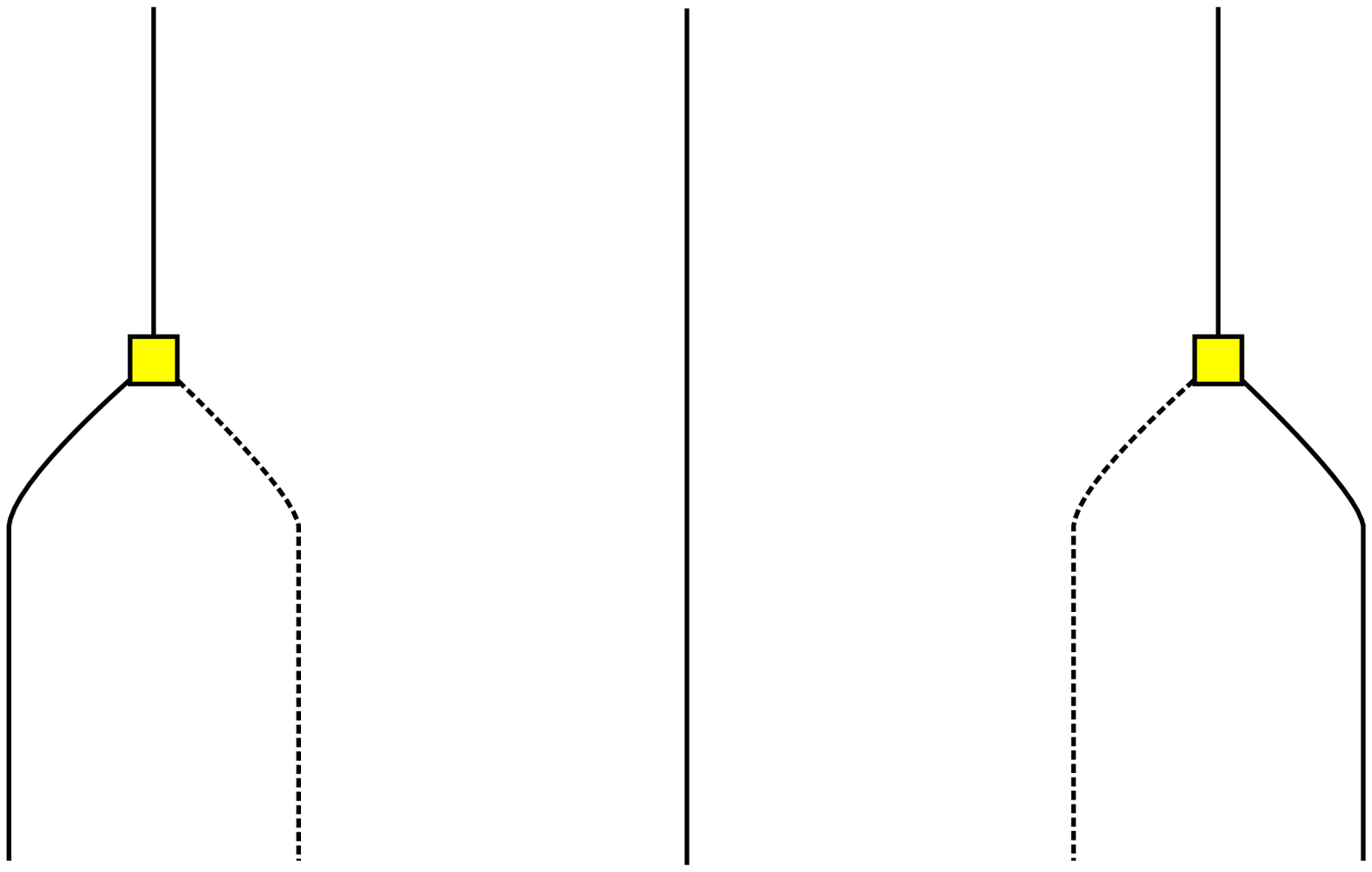}}
	\end{picture}
	\caption{Choice of adapted basis.}
		\label{fig:adaptedbasis}
\end{figure}
which is possible since \cC{} is strict.

\paragraph{A category related to $\widehat{\mathfrak{su}}_k(2)$}~\\
The category of integrable highest weight modules of the untwisted affine algebra $\widehat{\mathfrak{su}}_k(2)$ for some $k\in\N$ or, alternatively, the semisimple part of the category of representations of $U_q(\mathfrak{sl}_2)$ for $q=e^{\frac{2\pi i}{k+2}}$, is a modular tensor category. In CFT it is the category underlying the $\mathfrak{su}(2)$ WZW model at level $k$.
We define \cC$_k${} to be strictifications of these categories. There are $k+1$ isomorphism classes of simple objects, i.e. $I=\{0,1,\ldots ,k\}$, with fusion rules given by
\be{su2fusion}
	U_i\oti U_j\cong \oplus_{l=|i-j|}^{\mathrm{min}\{i+j,2k-i-j\}}U_l.
\ee
In particular we have the fusion coefficients $N_{ij}^{\phantom{ij}l}\in\{0,1\}$ for all values of $i$, $j$, and $l$.
The twist coefficients of the simple objects are given by
\be{su2twist}
	\theta_j=q^{\frac{j(j+2)}{4}}
\ee
and the quantum dimensions by
\be{su2qdim}
	d_j\equiv\mathrm{dim}(U_j)=\frac{q^{j/2}-q^{-j/2}}{q^{1/2}-q^{-1/2}}.
\ee
The object $U_k$ is invertible\footnote{In physics nomenclature an invertible object is known as a \em{simple current}.}, $U_k\oti U_k\cong\one$, a fact that will play an important role in the next section. We note that $d_k=1$ and $\theta_k=i^k$, so $\theta_k=1$ precisely when $k=4N$ and $\theta_k=-1$ precisely when $k=2(2N+1)$.
One important feature of invertible objects is that, tensoring any simple object with an invertible object gives a simple object. In particular, the Picard group $Pic(\cC)$ of \cC{}, i.e. the multiplicative group generated by isomorphism classes of invertible objects, acts on isomorphism classes of simple objects (though not in general on the set of simple objects).
Obviously, $Pic(\cC_k)\cong\Z_2$.

\subsection{Algebras in tensor categories and an Endofunctor}\label{ssec:algebras}

By an algebra in \cC{} we shall mean an associative unital algebra, i.e. an object $A$, a multiplication morphism $m: A\otimes A\rightarrow A$ and a unit morphism $\eta:\one\rightarrow A$ satisfying the associativity and unit constraints. The notion of a (coassociative, counital) coalgebra is the obvious dual concept, with comultiplication and counit denoted by $\Delta$ and $\varepsilon$ respectively. Graphically, these morphisms and constraints are pictured as in figure~\ref{fig:algcoalg}.
\begin{figure}[h]
	\centering
	\begin{picture}(400,250)(0,0)
		\put(70,150)		{\includegraphics[scale=0.5]{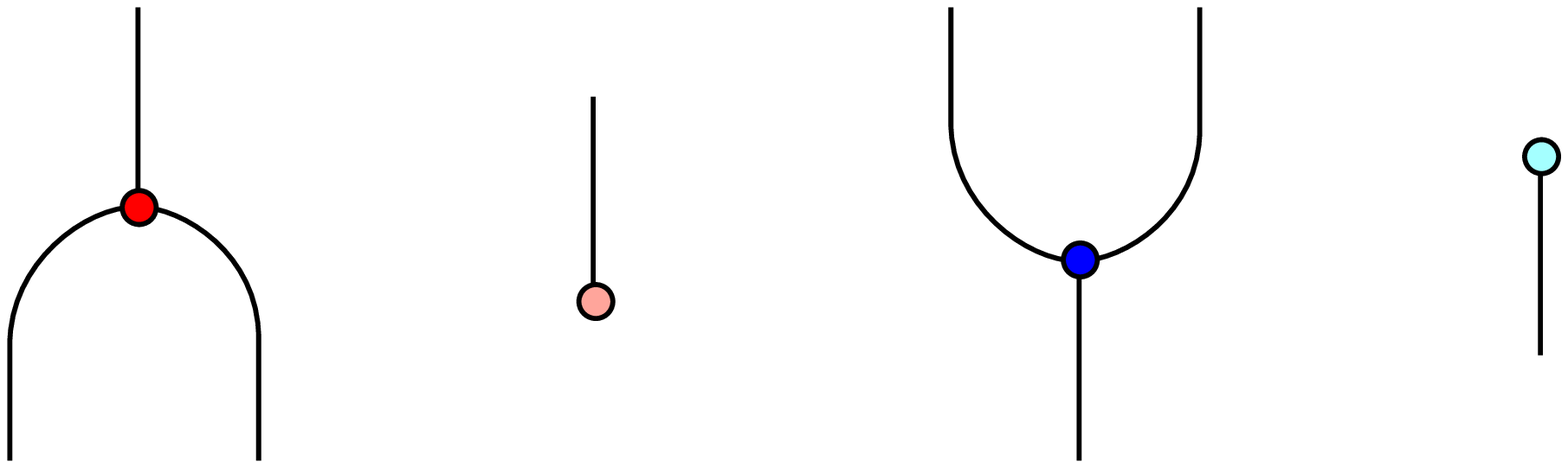}}
		\put(50,190)		{$m=$}
		\put(67,143)		{\scriptsize $A$}
		\put(109,143)		{\scriptsize  $A$}
		\put(89,230)		{\scriptsize $A$}
		\put(140,190)		{$\eta =$}
		\put(165,217)		{\scriptsize $A$}
		\put(195,190)		{$\Delta =$}
		\put(224,230)		{\scriptsize $A$}
		\put(266,230)		{\scriptsize $A$}
		\put(246,143)		{\scriptsize $A$}
		\put(295,190)		{$\varepsilon =$}
		\put(323,160)		{\scriptsize $A$}
		\put(60,70)		{\includegraphics[scale=0.25]{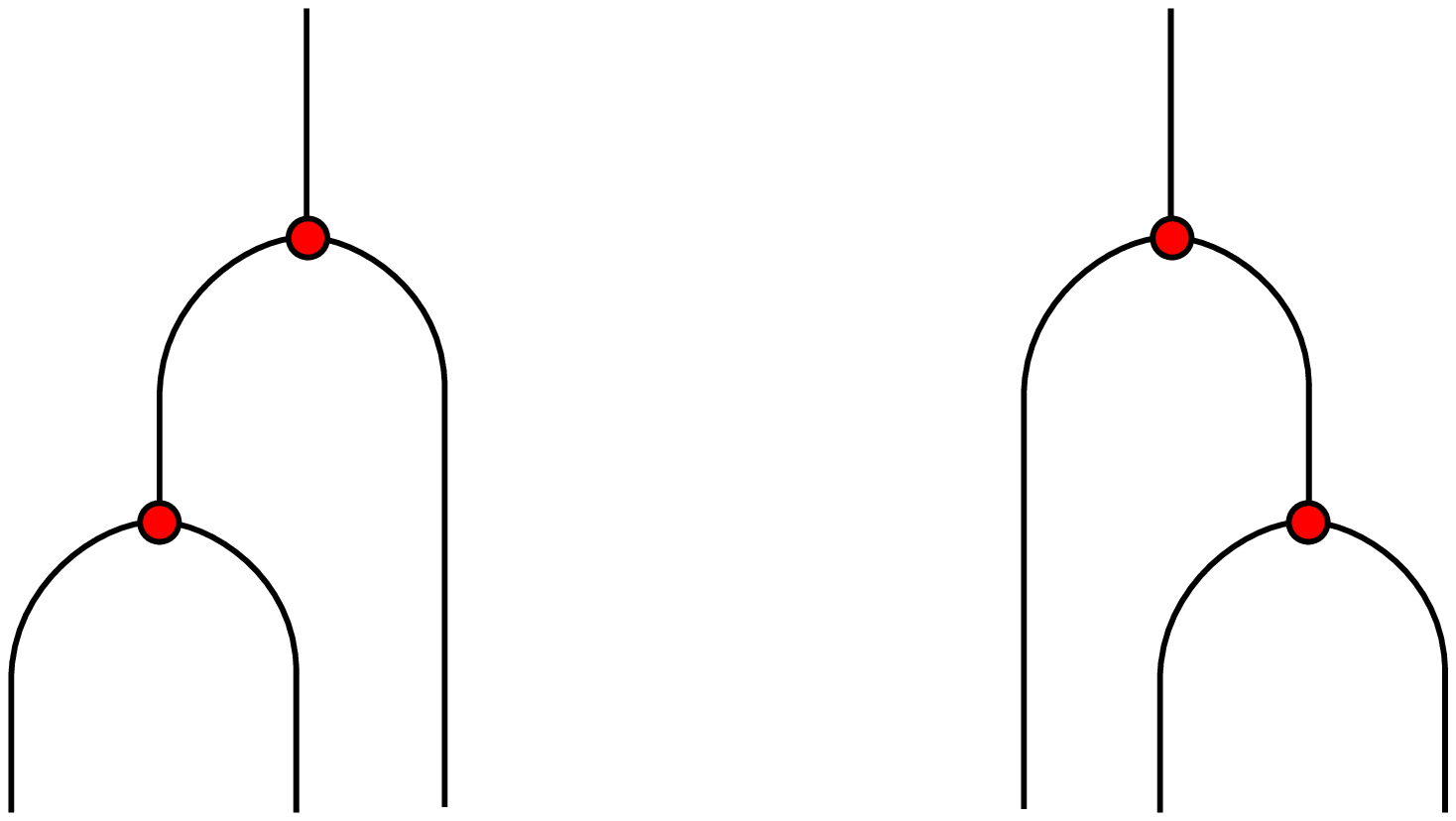}}
		\put(107,95)		{$=$}
		\put(107,25)		{$=$}
		\put(275,90)		{\scriptsize $=$}
		\put(303,90)		{\scriptsize $=$}
		\put(275,20)		{\scriptsize $=$}
		\put(303,20)		{\scriptsize $=$}
		\put(250,70)	{\includegraphics[scale=0.25]{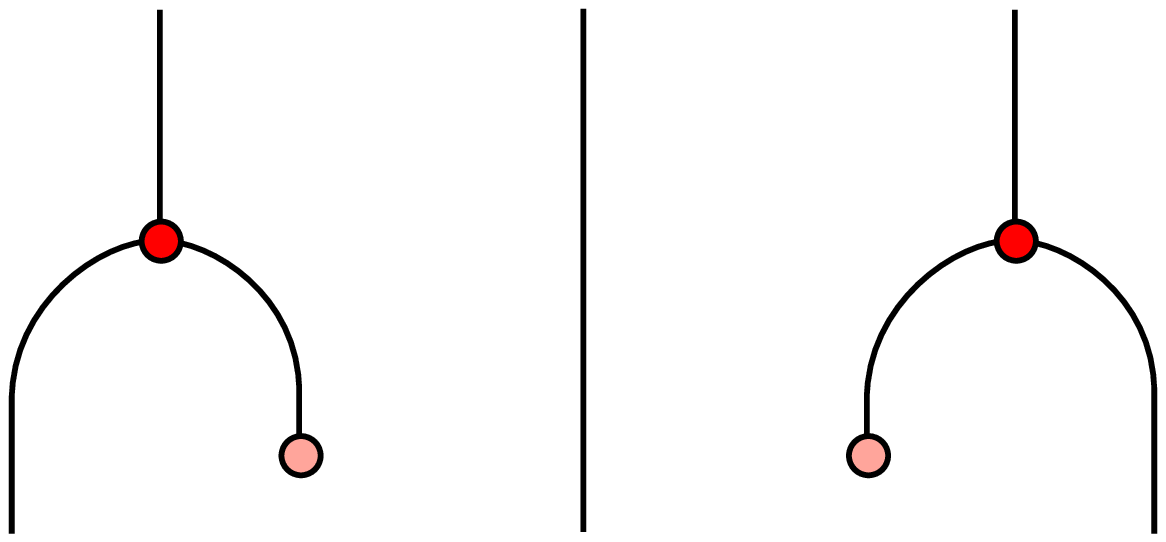}}
		\put(60,0)	{\includegraphics[scale=0.25]{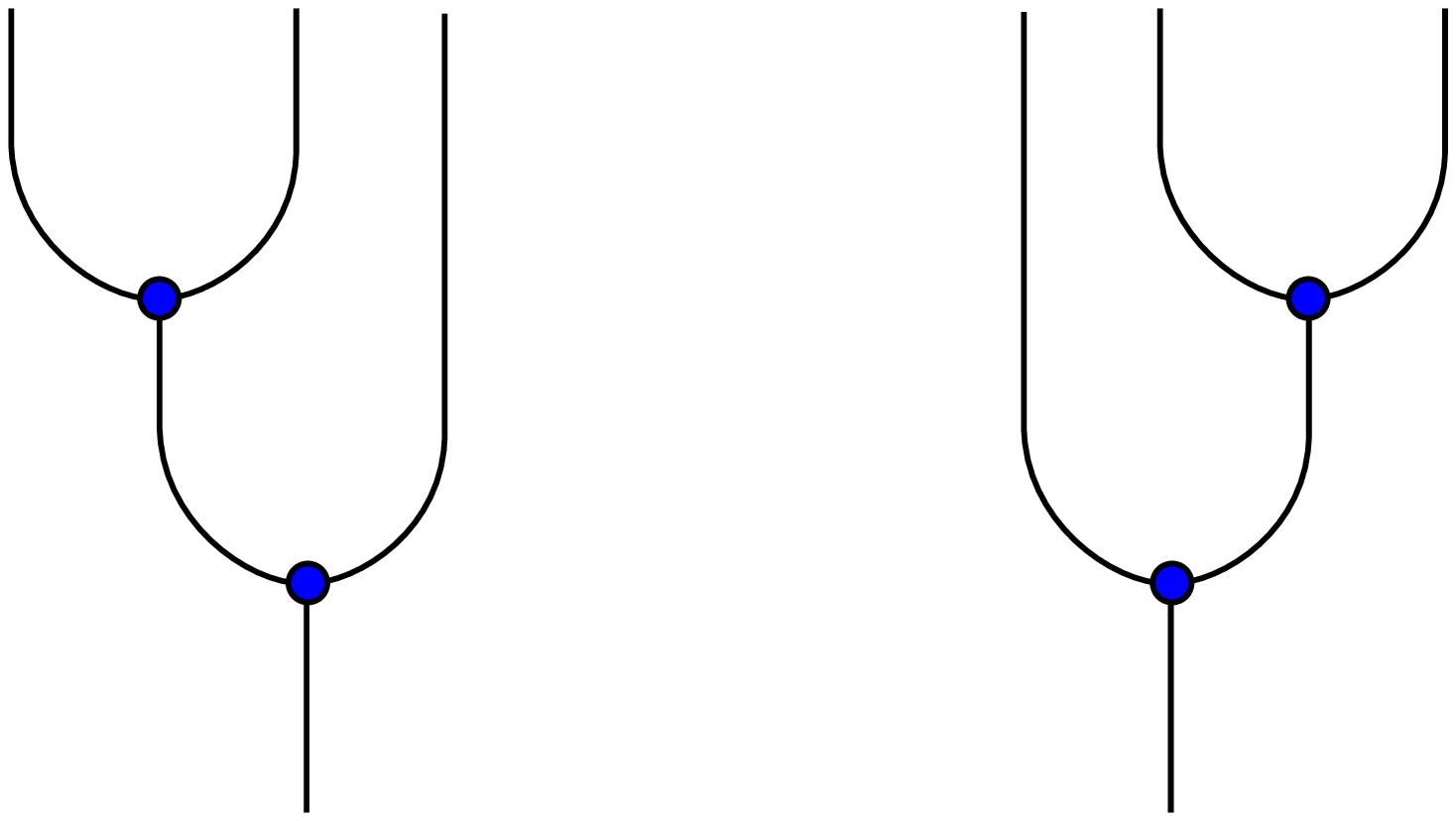}}
		\put(250,0)	{\includegraphics[scale=0.25]{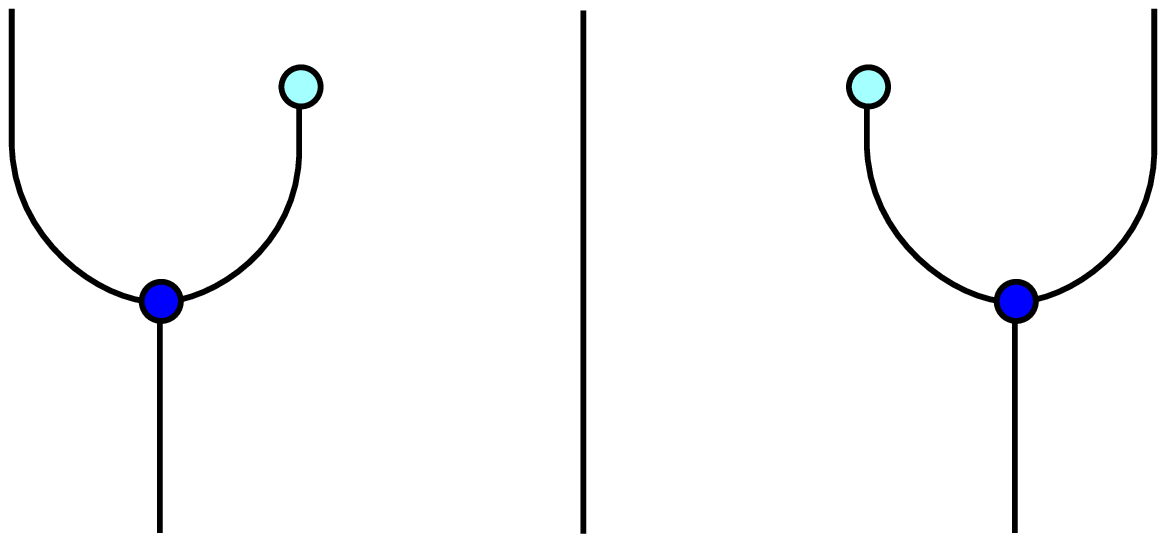}}
	\end{picture}
	\caption{ The graphical notation for a unital algebra and counital coalgebra, with associativity, coassociativity, left- and right-unit and -counit constraints.}
	\label{fig:algcoalg}
\end{figure}

A \underline{Frobenius} algebra in \cC{} is a $5$-tuple $(A,m,\eta,\Delta,\varepsilon)$ s.t. $(A,m,\eta)$ is an algebra, $(A,\Delta,\varepsilon)$ is a coalgebra, and there is the following compatibility condition between the algebra and coalgebra structures:
\be{Frob}
	(\id_A\oti m)\circ(\Delta\oti\id_A)=\Delta\circ m=(m\oti \id_A)\circ(\id_A\oti\Delta),
\ee
which is shown graphically in figure~\ref{fig:frob}.
\begin{figure}[h]
	\centering
	\begin{picture}(200,90)
	\put(0,0)			{\includegraphics[scale=0.3]{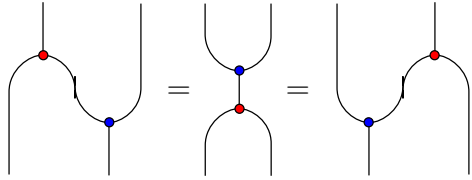}}
	\put(60,30)		{$=$}
	\put(105,30)		{$=$}
	\end{picture}
	\caption{The Frobenius condition.}
	\label{fig:frob}
\end{figure}

An algebra with counit, $(A,m,\eta,\varepsilon)$ is called \underline{symmetric} if the two obvious morphisms from $A$ to $A^\vee$ are equal:
\be{Symm}
	\Phi_1\equiv[(\varepsilon\circ m)\oti \id_{A^\vee}]\circ(\id_A\oti b_A) = [\id_{A^\vee}\oti(\varepsilon\circ m)]\circ (\tilde{b}_A\oti\id_A)\equiv\Phi_2,
\ee
or in graphical notation as shown in figure~\ref{fig:sym}
\begin{figure}[h]
	\centering
	\begin{picture}(160,90)
	\put(0,0)			{\includegraphics[scale=0.3]{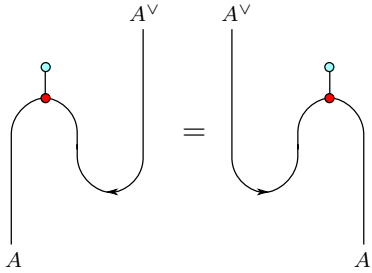}}
	\put(65,40)		{$=$}
	\put(-2,-8)			{\scriptsize $A$}
	\put(45,85)		{\scriptsize $A^\vee$}
	\put(80,85)		{\scriptsize $A^\vee$}
	\put(130,-8)		{\scriptsize $A$}
	\end{picture}
	\caption{The symmetry condition.}
	\label{fig:sym}
\end{figure}

An algebra with counit $(A,m,\eta,\varepsilon)$ is called \underline{non-degenerate} if $\Phi_1$ (or equivalenty $\Phi_2$) as defined above is an isomorphism.

A Frobenius algebra is called normalised \underline{special} if
\be{Spec}
	\varepsilon\circ\eta=\dim(A)\id_\one,\ m\circ\Delta=\id_A.
\ee
We drop the prefix normalised and call a Frobenius algebra special if it satisfies these two conditions.
Specialness is shown in graphical notation in figure~\ref{fig:special}.
\begin{figure}[h]
	\centering
	\begin{picture}(200,100)
	\put(0,0)			{\includegraphics[scale=0.4]{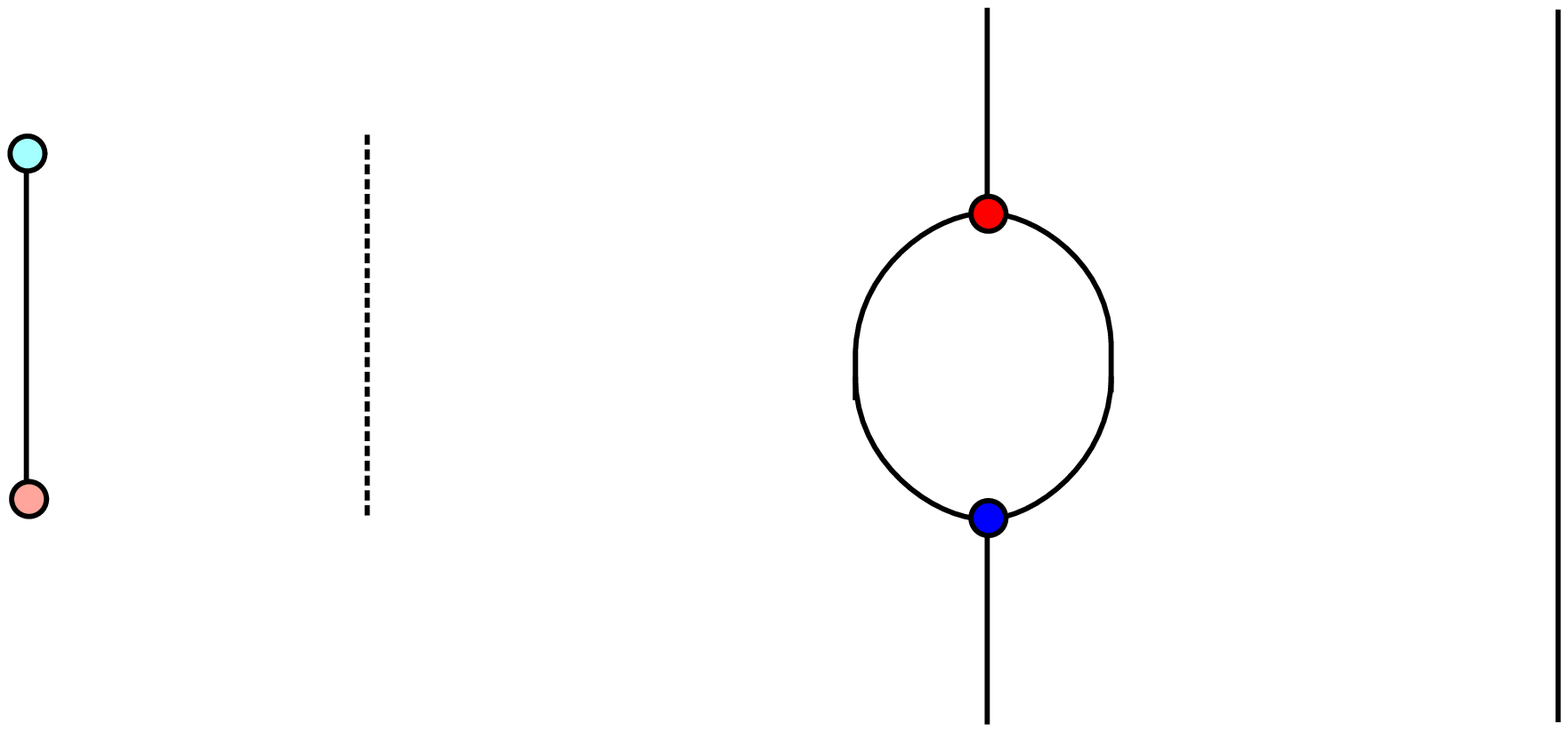}}
	\put(17,50)		{$=d_A$}
	\put(165,50)		{$=$}
	\end{picture}
	\caption{The (normalised) specialness condition.}
	\label{fig:special}
\end{figure}

\noindent Note that there is always the structure of a symmetric special Frobenius algebra on the object $\one$.\\

The notion of (left and right) modules and bimodules of algebras are defined in the obvious way, and we call an algebra $A$ \underline{simple} if it is simple as a bimodule over itself: $\dim_\C\Hom_{A|A}(A,A)=1$. An algebra $A$ is called \underline{haploid} if it contains one copy of the tensor unit: $\dim_\C\Hom(\one,A)=1$.\\[1ex]

\noindent For algebras of the form
\be{algdec}
	A\cong\oplus_{i\in J\subset I} U_i
\ee
(implying haploidity of $A$) we choose retracts $(U_i, e_{U_i\prec A},r_{U_i\prec A})$ corresponding to every simple subobject. These distinguished retracts are drawn as follows

\begin{figure}[h]
	\centering
	\begin{picture}(150,80)(0,0)
		\put(0,0)		{\includegraphics[scale=0.4]{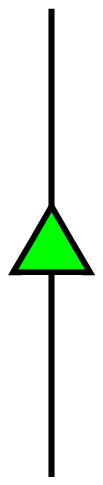}}
		\put(1,-8)		{\scriptsize $U_i$}
		\put(1,62)		{\scriptsize $A$}
		\put(10,28)	{$e_{U_i\prec A}$}
		\put(120,0)	{\includegraphics[scale=0.4]{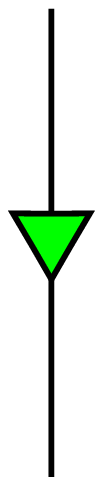}}
		\put(121,-8)	{\scriptsize $A$}
		\put(121,62)	{\scriptsize $U_i$}
		\put(130,20)	{$r_{U_i\prec A}$}
	\end{picture}
	\caption{Graphical notation for the distingished retracts of an algebra $A$.}
\end{figure}

In particular we let $e_{\one\prec A}=\eta$, so if $A$ is (normalised) special we have $r_{\one\prec A}=\dim(A)^{-1}\varepsilon$.
Using these retracts we define the components of the multiplication and comultiplication as shown in figure~\ref{fig:multcomp}.
\begin{figure}[h]
	\centering
	\begin{picture}(150,80)(0,0)
		\put(0,0)		{\includegraphics[scale=0.4]{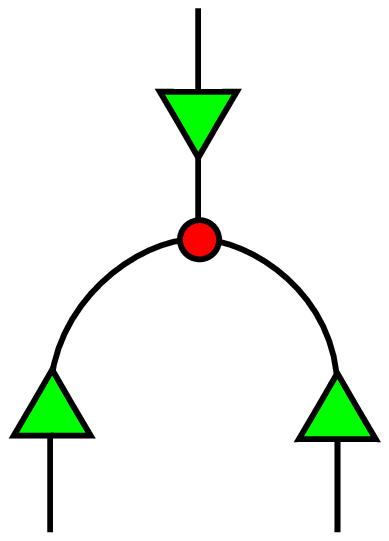}}
		\put(-55,30)	{$m_{ij}^{\phantom{ij}k} \lambda_{(ij)k} =$}
		\put(0,-8)		{\scriptsize $U_i$}
		\put(37,-8)	{\scriptsize $U_j$}
		\put(17,65)	{\scriptsize $U_k$}
		\put(120,0)	{\includegraphics[scale=0.4]{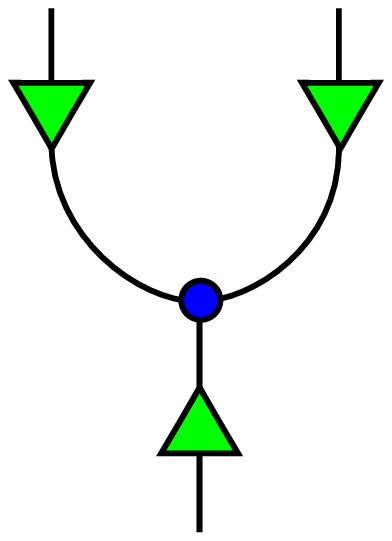}}
		\put(65,30)	{$\Delta_i^{\phantom{i}jk}\bar{\lambda}^{(jk)i}=$}
		\put(120,65)		{\scriptsize $U_j$}
		\put(157,65)	{\scriptsize $U_k$}
		\put(139,-9)	{\scriptsize $U_i$}
	\end{picture}
	\caption{Definition of the components of the multiplication and comultiplication in terms of the distinguished retracts.}
	\label{fig:multcomp}
\end{figure}

We will need the following result from~\cite{FRS1}
\begin{lemma}(Lemma 3.7 b) \cite{FRS1})
Let  $(A,m,\eta)$ be an algebra and let $\varepsilon\in\Hom(A,\one)$ be a morphism s.t. $(A,m,\eta,\varepsilon)$ is a non-degenerate algebra,
 then there is a unique structure of symmetric special Frobenius algebra on $A$.
\end{lemma}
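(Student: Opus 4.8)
The entire content is to manufacture a coproduct out of $(A,m,\eta,\varepsilon)$, so the plan is to read a pairing off the algebra structure, dualise it, and build $\Delta$ from $m$ and the resulting copairing. Put
\be{kappadef}
	\kappa\defas\varepsilon\circ m\in\Hom(A\oti A,\one).
\ee
Associativity of $m$ immediately gives the invariance $\kappa\circ(m\oti\id_A)=\kappa\circ(\id_A\oti m)$, and the hypothesis says exactly that $\kappa$ is non-degenerate, i.e. the induced $\Phi_1\maps A\to A^\vee$ is invertible. Non-degeneracy then produces a unique copairing $\gamma\in\Hom(\one,A\oti A)$ pinned down by the two snake identities $(\kappa\oti\id_A)\circ(\id_A\oti\gamma)=\id_A=(\id_A\oti\kappa)\circ(\gamma\oti\id_A)$; concretely $\gamma$ is $b_A$ (or $\tilde{b}_A$) transported through $\Phi_1^{-1}$, exhibiting $A$ as self-dual.

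\noindent
Next I would define the coproduct, keeping $\varepsilon$ as counit, by
\be{deltadef}
	\Delta\defas(m\oti\id_A)\circ(\id_A\oti\gamma).
\ee
A short computation using only the invariance of $\kappa$ shows the copairing $\gamma$ is balanced, so that \eqref{eq:deltadef} agrees with $(\id_A\oti m)\circ(\gamma\oti\id_A)$; I take either as $\Delta$. One then checks that $(A,\Delta,\varepsilon)$ is a counital coalgebra: the two counit axioms are immediate from the snake identities and the unit axiom for $m$, while coassociativity reduces, after inserting one copy of $\gamma$, to associativity of $m$ and balancedness of $\gamma$. The Frobenius relation \eqref{eq:Frob} is similarly forced: with $\Delta$ as in \eqref{eq:deltadef} both of its equalities collapse to associativity of $m$, essentially because $\Delta$ was assembled from $m$ and $\gamma$. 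This much reproduces the Frobenius structure; it is all graphical bookkeeping with associativity and the snake identities as the only inputs.

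\noindent
It remains to settle uniqueness and to establish the symmetry \eqref{eq:Symm} and specialness \eqref{eq:Spec}. Uniqueness is clean: in any Frobenius structure with the given $\varepsilon$, the copairing $\Delta\circ\eta$ is automatically dual to $\kappa=\varepsilon\circ m$ under the snake identities, hence equals $\gamma$ by non-degeneracy, and the Frobenius relation then forces $\Delta=(m\oti\id_A)\circ(\id_A\oti(\Delta\circ\eta))$, i.e. exactly \eqref{eq:deltadef}. The symmetry and specialness are the substantive points and are what I expect to be the main obstacle, since neither is a formal consequence of invertibility of $\Phi_1$ alone: the equality $\Phi_1=\Phi_2$ is a condition on $(m,\varepsilon)$ by itself. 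I would verify \eqref{eq:Symm} by writing $\Phi_1$ and $\Phi_2$ out explicitly and combining the invariance of $\kappa$ with the identification of the left and right dualities furnished by the twist $\theta$ of the ribbon category \cC, and I would verify \eqref{eq:Spec} from the matching property of the pairing, namely that $m\circ\Delta$ and $\varepsilon\circ\eta$ come out to be the required multiples of the identity. These two checks are precisely where the ribbon structure and the full strength of the hypotheses must be used.
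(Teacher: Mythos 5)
The paper offers no proof of this lemma---it is imported verbatim from [FRS1]---so your attempt can only be judged against the correct statement. The first half of your plan is sound and standard: $\kappa=\varepsilon\circ m$ is invariant by associativity, non-degeneracy produces the copairing $\gamma$, and $\Delta=(m\oti\id_A)\circ(\id_A\oti\gamma)$ together with the given $\varepsilon$ does yield a Frobenius algebra, uniquely so among Frobenius structures \emph{with counit} $\varepsilon$.

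The genuine gap is in your last paragraph, where you propose to keep the given $\varepsilon$ as the counit and then ``verify'' symmetry and specialness. Both are false in general for an arbitrary non-degenerate $\varepsilon$, so no graphical computation can establish them. In $\Vect$ take $A=\one\oplus\one=\C\times\C$ with componentwise multiplication and $\varepsilon(a,b)=a+2b$: this is non-degenerate and symmetric, but the resulting $\Delta$ satisfies $m\circ\Delta=\mathrm{diag}(1,\tfrac12)$ and $\varepsilon\circ\eta=3\neq\dim(A)$, so specialness fails. Take $A=M_n(\C)$ with $\varepsilon(a)=\mathrm{tr}(ua)$ for $u$ invertible and non-central: this is non-degenerate but not symmetric (and the coincidence of left and right dualities, which is automatic in $\Vect$, does not help). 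The actual content of the lemma is that the symmetric special Frobenius structure lives on the algebra $(A,m,\eta)$ with a counit $\varepsilon'$ that is in general \emph{different} from the given $\varepsilon$; the hypothesis only certifies that some non-degenerate functional exists, which is exactly how the paper uses it when it says that for haploid $A$ one may ``already choose $\varepsilon=d_A r_{\one\prec A}$''. The missing steps are therefore: construct the canonical symmetric counit (essentially the categorical trace of left multiplication in the regular representation), show it is again non-degenerate using the given $\varepsilon$, and normalise it so that $m\circ\Delta=\id_A$ and $\varepsilon'\circ\eta=\dim(A)\id_\one$. Correspondingly, your uniqueness argument proves the wrong statement---uniqueness of the Frobenius structure with prescribed counit $\varepsilon$---whereas the lemma requires uniqueness over all admissible counits; for that one must show that two symmetric non-degenerate counits differ by an invertible central element and that specialness forces this element to be the unit.
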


As a consequence, to check whether a triple $(A,m,\eta)$ can be extended to a ssFa we only need to check associativity, unitality and find a $\varepsilon$ s.t. the resulting structure is non-degenerate. If $A$ is haploid we can already choose $\varepsilon=d_A r_{\one\prec A}$.
If we restrict to algebras of the form above, i.e. where $\dim_\C\Hom(U_i,A)\in\{0,1\}$ for any simple object $U_i$, the restrictions formed by demanding unitality, associativity and non-degeneracy can be expressed rather compactly in terms of the components of $m$. In particular we have
\begin{itemize}
\item
Unitality:
\be{unitcomp}
	m_{0i}^{\phantom{oi}i}=1=m_{i0}^{\phantom{i0}i}\ \forall U_i\prec A
\ee
\item
Non-degeneracy:
\be{nondegcomp}
	m_{i\bar{\iota}}^{\phantom{i\bar{\iota}}0}\neq 0\ \forall U_i\prec A
\ee
\end{itemize}

\paragraph{Endofunctors Related to Algebras}~\\
Let $A$ be a symmetric special Frobenius algebra, and consider  the endomorphisms $P^{l/r}_A$ shown in figure below. It follows straightforwardly from the properties of $A$ that $P^{l/r}_A$ are idempotents, see lemma 5.2 of \cite{FRS1}.

\begin{figure}[h]
	\centering
	\begin{picture}(200,100)(0,0)
		\put(0,40)	{$P^l_A=$}
		\put(30,0)		{\includegraphics[scale=0.4]{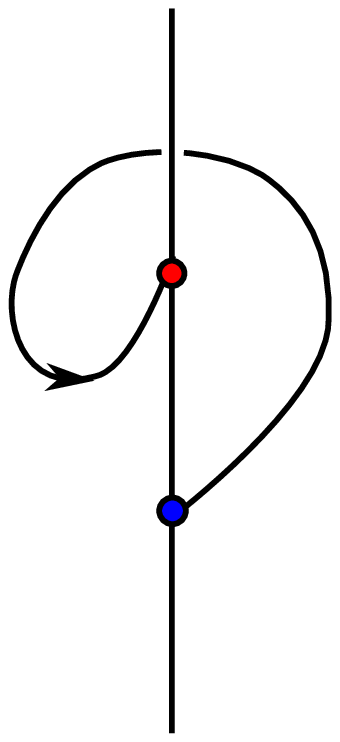}}
		\put(90,40)	{$P^r_A=$}
		\put(120,0)	{\includegraphics[scale=0.4]{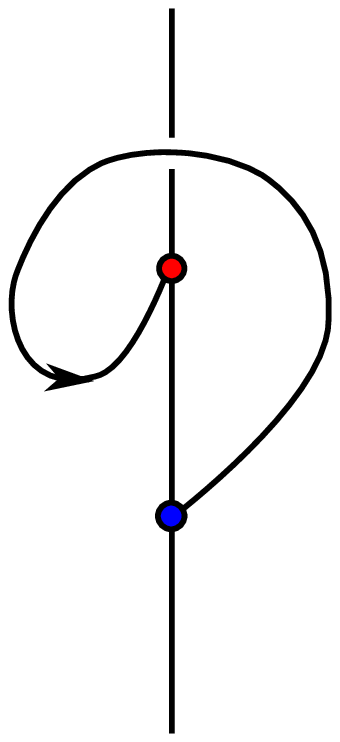}}
	\end{picture}
\end{figure}

\begin{definition}
The images $(C_{l/r}(A),e_{l/r},r_{l/r})$ of $P_A^{l/r}$ are called the {\em left center} respectively the {\em right center} of $A$.
\end{definition}
The left and right centers have the following properties
\begin{itemize}
	\item $C\equiv C_{l/r}(A)$ has the structure of a commutative symmetric special Frobenius algebra, with $m_C=r\circ m\circ(e\oti e)$, $,\eta_C=r\circ\eta$, $\Delta_C=\zeta(r\oti r)\circ\Delta\circ e$, $\varepsilon_C=\zeta\varepsilon\circ e$ for $\zeta=d_Cd_A^{-1}$. If $A$ is simple, so is $C$. (proposition 2.37, \cite{FrFRS})
	\item $C$ has trivial twist, $\theta_C=\id_C$ (lemma 2.33, \cite{FrFRS})
	\item In CFT, $C$ has the interpretation as the maximal extension of the chiral algebra w.r.t a given modular invariant torus partition function
\end{itemize}
For each object $U$ we define two endomorphisms, $P^l_A(U)$ and $P^r_A(U)$, of $A\oti U$ as in figure~\ref{fig:endofunctor}.

\begin{figure}[h]
	\centering
	\begin{picture}(200,100)(0,0)
		\put(25,0)		{\includegraphics[scale=0.4]{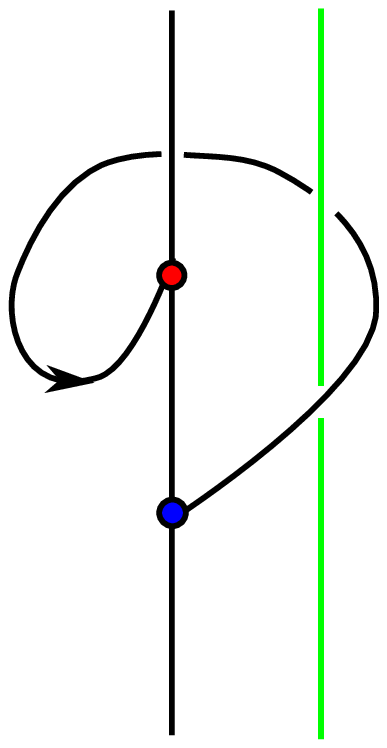}}
		\put(-30,40)		{$P^l_A(U)=$}
		\put(40,-8)	{\scriptsize $A$}
		\put(58,-8)	{\scriptsize $U$}
		\put(40,88)	{\scriptsize $A$}
		\put(58,88)		{\scriptsize $U$}
		\put(140,0)	{\includegraphics[scale=0.4]{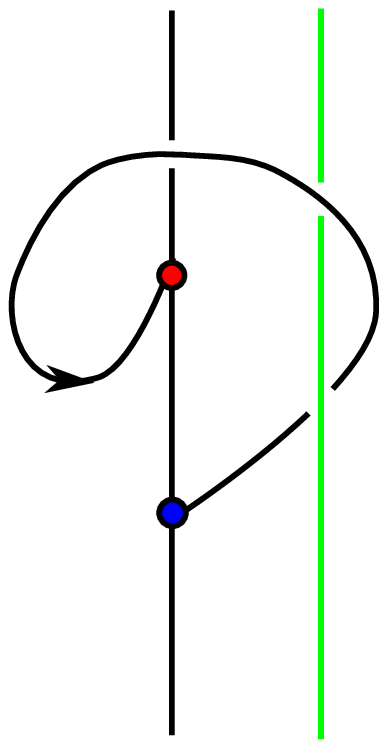}}
		\put(90,40)	{$P^r_A(U)=$}
		\put(155,-8)	{\scriptsize $A$}
		\put(173,-8)	{\scriptsize $U$}
		\put(155,88)	{\scriptsize $A$}
		\put(173,88)	{\scriptsize $U$}
	\end{picture}
	\caption{Idempotent endomorphisms defining the two endofunctors $E^{l/r}_A$.}
	\label{fig:endofunctor}
\end{figure}

Similarly as for $P^{l/r}_A$ the properties of $A$ imply that $P^{l/r}_A(U)$ are idempotents, see lemma 5.2 of \cite{FRS1}.

\begin{definition} The two endofunctors $E^{l/r}_A\in\End(\cC)$ are defined through
	\begin{itemize}
		\item $U\in\Obj(\cC)\mapsto E^{l/r}_A(U)\equiv\im(P^{l/r}_A(U))$
		\item $f\in\Mor(\cC)\mapsto E^{l/r}_A(f)\equiv r_{l/r}\circ f\circ e_{l/r}$
	\end{itemize}
\end{definition}
Note that by taking $U=\one$ we get $C_{l/r}(A)=E^{l/r}_A(\one)$.
\begin{remark} The functors $E^{l/r}_A$ are not tensor functors since, in general, $E^{l/r}_A(U)\oti E^{l/r}_A(V)\ncong E^{l/r}_A(U\oti V)$. Using these it is, however, possible to construct tensor functors. The objects $E^{l/r}_A(U)$ carry a natural structure of modules of $C_{l/r}(A)$, and they furthermore have the property of trivialising the braiding w.r.t. $C_{l/r}(A)$. Such modules were called {\em local} modules in \cite{FrFRS}. The category $\cC^{loc}_{C_{l/r}(A)}$ of local modules of the left or right center of a symmetric special Frobenius algebra $A$ happens to be a ribbon category, and if \cC{} is modular then so is $\cC^{loc}_{C_{l/r}(A)}$. By interpreting $E^{l/r}_A$ as functors from \cC{} to $\cC^{loc}_{C_{l/r}(A)}$, they are both tensor functors (see \cite{FrFRS}).
\end{remark}

Denote by $\{\chi_i\}_{i\in I}$ the basis of $\bl(T^2)$ given by the coevaluation morphisms $b_{U_i}$, i.e. $\chi_i$ is given by the invariant of a solid torus with a single ribbon of trivial framing labelled by $U_i$ traversing the non-contractible cycle once, and let $\{\bar{\chi}_i\}_{i\in I}$ be the dual basis. A crucial result is given by the following
\begin{proposition}
The modular invariant torus partition function $Z(A)=\sum_{i,j\in I}Z_{ij}(A)\chi_i\oti\bar{\chi}_j$ corresponding to a symmetric special Frobenius algebra $A$, of a rational CFT based on the modular tensor category \cC{}, is given by
	\be{modinv}
		Z_{ij}(A)=\dim_\C\Hom(E^l_A(U_i),U_j)
	\ee
\end{proposition}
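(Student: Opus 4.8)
The plan is to compute $Z(A)$ from its definition as a three-manifold invariant in the Reshetikhin--Turaev TQFT and then to massage the resulting ribbon graph into the idempotent $P^l_A(U_i)$ that defines the endofunctor $E^l_A$. Following the TFT construction of RCFT correlators in \cite{FRS1}, the torus partition function is the invariant of the \emph{connecting manifold} $T^2\times[-1,1]$ carrying a single $A$-ribbon that runs along one of the two cycles of $T^2$, placed on the central slice $T^2\times\{0\}$. Since $\partial(T^2\times[-1,1])=T^2\sqcup\overline{T^2}$, this invariant is naturally an element of $\bl(T^2)\otimes\bl(T^2)^*$, and its components in the distinguished basis $\{\chi_i\otimes\bar\chi_j\}$ are by definition the numbers $Z_{ij}(A)$. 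The task thus reduces to evaluating this invariant.

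First I would extract the matrix element $Z_{ij}(A)$ by gluing. Pairing with $\chi_i\in\bl(T^2)$ and with $\bar\chi_j\in\bl(T^2)^*$ amounts to gluing onto the two boundary tori the solid tori representing $\chi_i$ and $\chi_j$, i.e. solid tori carrying a $U_i$-ribbon, respectively a $U_j$-ribbon, along their core. With the gluing normalising the pairing so that $\langle\bar\chi_j,\chi_i\rangle=\delta_{ij}$, the resulting closed manifold is $S^2\times S^1$, and it carries the closed ribbon graph consisting of the $U_i$- and $U_j$-cores together with the embedded $A$-ribbon. Applying the gluing axiom of the modular functor, the invariant of $S^2\times S^1$ with ribbons threading the $S^1$ factor equals the trace of the holonomy endomorphism obtained by cutting the $S^1$ along a meridian $S^2$.

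Next I would identify this holonomy endomorphism. Cutting along the $S^2$ produces the object $A\otimes U_i$, and the way the $A$-ribbon braids around the $U_i$-ribbon as it winds along the chosen torus cycle reproduces exactly the braidings appearing in the definition of $P^l_A(U_i)$; hence the holonomy around the $S^1$ is the idempotent $P^l_A(U_i)$. Its $U_j$-graded trace then computes the multiplicity of $U_j$ in $\im(P^l_A(U_i))=E^l_A(U_i)$. Here I would invoke the elementary fact that, in a semisimple $\C$-linear category, for an idempotent $p\in\End(X)$ the induced map $\varphi\mapsto\varphi\circ p$ on $\Hom(X,U_j)$ is again idempotent, with image $\Hom(\im(p),U_j)$, so that its trace equals its rank $\dim_\C\Hom(\im(p),U_j)$. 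Taking $X=A\otimes U_i$ and $p=P^l_A(U_i)$ then yields the claimed formula $Z_{ij}(A)=\dim_\C\Hom(E^l_A(U_i),U_j)$.

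The main obstacle is the bookkeeping in this identification: one must check that the topology of the connecting manifold reproduces the \emph{left}-handed braidings of $P^l_A(U_i)$ rather than those of $P^r_A(U_i)$ (this is what fixes $E^l_A$, and it depends on orientation conventions and on which cycle of $T^2$ carries the $A$-ribbon), and one must track all framings, twists, and normalisation factors of $A$ through the gluing so that they cancel. It is precisely here that the specialness and symmetry of $A$ enter, ensuring via Lemma 5.2 of \cite{FRS1} that $P^l_A(U_i)$ is a genuine idempotent and that the spurious $\dim(A)$-factors disappear. Apart from this careful translation between the three-dimensional invariant and the graphical calculus in \cC, the argument is a direct specialisation of the partition-function computation of \cite{FRS1}.
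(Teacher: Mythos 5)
Your proposal is correct and coincides in substance with the paper's proof: the paper's entire proof is the citation ``this follows immediately from eq.\ (2.49) and Proposition 3.6 of \cite{FrFRS}'', and those results are established by exactly the connecting-manifold computation you outline (gluing the $U_i$- and $U_j$-solid tori onto $T^2\times[-1,1]$, recognising the resulting ribbon graph in $S^2\times S^1$ as the trace of precomposition with the idempotent $P^l_A(U_i)$, and identifying the trace of an idempotent with its rank $\dim_\C\Hom(E^l_A(U_i),U_j)$). The one point worth flagging in your bookkeeping is the basis convention $\bar\chi_j$ versus $\tilde\chi_j=\bar\chi_{\bar{j}}$ discussed in Remark~\ref{rem:morita}, which is where a charge conjugation on the index $j$ could silently enter.
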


\noindent Proof: This follows immediately from eq. $(2.49)$ and proposition $3.6$ of \cite{FrFRS}.$\Box$\\[1ex]

\noindent We will often drop the reference to $A$ and simply write $Z_{ij}$ for the coefficients.
\begin{remark}
\label{rem:morita}
\begin{itemize}
	\item It follows from the discussion in section 3.3 of \cite{FrFRSduality} that if $A$ and $B$ are two Morita equivalent symmetric special Frobenius algebras in \cC, then $Z_{ij}(A)=Z_{ij}(B)$. Combining the same discussion with the uniqueness result of \cite{FjFRS2} implies that two rational CFT's based algebras $A$ and $B$ in the same modular category \cC{} are equivalent (see \cite{FjFRS2} for the definition of equivalence) if and only if $A$ and $B$ are Morita equivalent. It is, however, not excluded at this point that there may exist two non-Morita equivalent algebras with the same torus partition function.
	\item In the CFT litterature, including \cite{FRS1,FrFRS}, it is conventional to express the torus partition functions in a different basis, namely $\{\chi_i\oti\tilde{\chi}_j\}_{i,j\in I}$, where $\tilde{\chi}_j\equiv\bar{\chi}_{\bar{j}}$. Thus what here appears as the identity matrix is in the CFT litterature the matrix $C_{ij}\equiv\delta_{i,\bar{j}}$, the {\em charge conjugation} matrix, and the partition function is referred to as the charge conjugation modular invariant. For our purposes it is convenient to identify the partition function with an endomorphism of the corresponding space of conformal blocks, and it is therefore more convenient to have the identity endomorphism being represented by the identity matrix.
\end{itemize}
\end{remark}

Finally, the following result will be useful.

\begin{proposition}
	Let $A$ be a symmetric special Frobenius algebra in a modular tensor category such that $Z_{ij}(A){\propto\!\!\!\!\! \slash\ }  \delta_{ij}$. Then there is a subobject $B\prec A$ with the structure of a symmetric special Frobenius algebra, and such that $B$ is Morita equivalent to a haploid symmetric special Frobenius algebra $C$ with $Z_{ij}(A){\propto\!\!\!\!\! \slash\ }  \delta_{ij}$.\end{proposition}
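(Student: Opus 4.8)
The plan is to pass from $A$ to a haploid representative of a Morita class that still records the non-triviality of the torus partition function; I read the final inequality as the assertion $Z_{ij}(C)\,{\propto\!\!\!\!\! \slash\ }\,\delta_{ij}$. First I would decompose $A$ as a direct sum of simple algebras. Since $A$ is special it is separable, hence semisimple, so the commutative algebra $\Hom_{A|A}(A,A)$ of bimodule endomorphisms is a product of copies of $\C$. Its primitive idempotents $e_k$ are central idempotents of $A$ and split it as $A\cong\bigoplus_k B_k$, with each $B_k\prec A$ a two-sided ideal, a unital algebra with unit $\eta_{B_k}=e_k\circ\eta_A$, and with $\dim_\C\Hom_{B_k|B_k}(B_k,B_k)=1$, i.e.\ simple. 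One checks that the coalgebra, specialness and symmetry data of $A$ restrict to each summand (with the normalisation $\dim(A)$ replaced by $\dim(B_k)$), so that each $B_k$ is a symmetric special Frobenius subobject of $A$.

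Next I would establish additivity of the partition function along this decomposition. Because the $B_k$ are algebra direct summands, the unit splits as $\eta_A=\sum_k\eta_{B_k}$ and the multiplication and comultiplication block-diagonalise, so the idempotent $P^l_A(U)$ on $A\oti U$ decomposes as $\bigoplus_k P^l_{B_k}(U)$. Passing to images gives $E^l_A(U)\cong\bigoplus_k E^l_{B_k}(U)$ for every object $U$, and hence, by~\eqref{eq:modinv}, $Z_{ij}(A)=\sum_k Z_{ij}(B_k)$. A sum of matrices each proportional to $\delta_{ij}$ is again proportional to $\delta_{ij}$, so the hypothesis $Z_{ij}(A)\,{\propto\!\!\!\!\! \slash\ }\,\delta_{ij}$ forces at least one summand $B\equiv B_{k_0}$ to satisfy $Z_{ij}(B)\,{\propto\!\!\!\!\! \slash\ }\,\delta_{ij}$. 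This simple summand is the required subobject.

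Finally, since $B$ is simple its category of left modules is an indecomposable module category over \cC, so any nonzero simple module $M$ is a generator. The internal endomorphism algebra $C\equiv\underline{\End}(M)$ then carries a natural structure of symmetric special Frobenius algebra Morita equivalent to $B$, and it is haploid since $\dim_\C\Hom(\one,C)=\dim_\C\Hom_B(M,M)=1$ by Schur's lemma. By the Morita invariance of the partition function (first item of Remark~\ref{rem:morita}) we conclude $Z_{ij}(C)=Z_{ij}(B)\,{\propto\!\!\!\!\! \slash\ }\,\delta_{ij}$, as desired.

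I expect the additivity identity $Z_{ij}(A)=\sum_k Z_{ij}(B_k)$ to be the main obstacle: one must verify in the graphical calculus both that the restricted data genuinely define special symmetric Frobenius structures on the $B_k$ and that $P^l_A(U)$ really respects the algebra decomposition, so that $E^l_A=\bigoplus_k E^l_{B_k}$. The passage from the simple summand $B$ to its haploid internal-End representative $C$ is standard module-category theory and can be quoted from the references, but the compatibility of the endofunctor $E^l_A$ with the splitting of $A$ is the step that requires genuine checking.
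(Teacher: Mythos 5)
Your proof is correct and follows essentially the same route as the paper's: use semisimplicity of the special algebra $A$ to split it into simple algebra summands, use additivity of the torus partition function under direct sums to locate a summand $B$ whose $Z_{ij}(B)$ is not proportional to $\delta_{ij}$, invoke Ostrik's corollary to replace $B$ by a Morita-equivalent haploid symmetric special Frobenius algebra, and conclude by Morita invariance of $Z$. The only differences are presentational: you carry out the full decomposition in one step and sketch proofs of the additivity $Z(A)=\sum_k Z(B_k)$ and of the internal-End construction, where the paper instead recurses on subobjects and simply cites Lemma 5.23 of \cite{FuS}, the corollary to Theorem 1 of \cite{Ostrik}, and remark~\ref{rem:morita}.
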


\begin{proof}
	Lemma 5.23 of~\cite{FuS} implies that $A$ is semisimple in the sense of~\cite{Ostrik}. If $A$ is indecomposable, then the corrollary to Theorem~$1$ in~\cite{Ostrik} implies that $A$ is Morita equivalent to a haploid non-degenerate algebra, and thus to a haploid symmetric special Frobenius algebra $B$, and $Z_{ij}(B)){\propto\!\!\!\!\! \slash\ }  \delta_{ij}$ according to remark~\ref{rem:morita}. If $A$ is decomposable, then there is a sub object $A_1\prec A$ with the structure of a non-degenerate algebra such that $Z_{ij}(A_1) {\propto\!\!\!\!\! \slash\ }  \delta_{ij}$ since $Z(A\oplus B)=Z(A)+Z(B)$, and we apply the same argument to $A_1$. $A$ has only a finite number of subobjects, so the procedure will end in a haploid symmetric special Frobenius algebra $C$ such that $Z_{ij}(C){\propto\!\!\!\!\! \slash\ }  \delta_{ij}$.
\end{proof}

\paragraph{Algebras in $\cC_k$}~\\

The modular invariant torus partition functions of the $su(2)$ WZW model has a well-known ADE classification~\cite{CaItZu}. The modular invariant of A type is the diagonal one $Z_{ij}=\delta_{ij}$, which exists for all values of $k$. For odd $k$, this is also the only one, and the methods described here cannot be used to analyze mapping class group representations with respect to reducibility.
For all even $k$ there is another, non-diagonal, modular invariant corresponding to the D in ADE, which we will use here. Finally there are three exceptional cases: for $k=10, 16$ and $28$ there are (non-diagonal) modular invariants corresponding to the $E_6$, $E_7$ and $E_8$ Dynkin diagrams.
Non-degenerate algebras corresponding to $D_{2n}$, $E_6$ and $E_8$ were constructed in \cite{KO}, and corresponding to $D_{2n+1}$ and $E_7$ in \cite{Ostrik}. Corresponding structures in the language of nets of subfactors on the circle were, however, constructed earlier in \cite{BE}, and it is a relatively straightforward task to translate between the two languages. In particular, the object underlying an algebra is obtained immediately.

\begin{table}[h]
\centering
	\begin{tabular}{c|c|c|c}
	type & $Z_{ij}$ & levels & algebra object\\
	\hline
	$A$ & $\delta_{ij}$ & $k\in\Z_+$ & $\one$\\
	$D$ & non-diagonal & $k\in 2\Z_+$, $k\geq 4$ & $\one\oplus U_k$\\
	$E_6$ & non-diagonal & $k=10$ & $\one\oplus U_6$\\
	$E_7$ & non-diagonal & $k=16$ & $\one\oplus U_8\oplus U_{16}$\\
	$E_8$ & non-diagonal & $k=28$ & $\one\oplus U_{10}\oplus U_{18}\oplus U_{28}$
	\end{tabular}
	\caption{The ADE classification of modular invariant torus partition functions for the $\mathfrak{su}(2)$ WZW model. It is indicated at what levels the different types occur. For each type the object underlying a simple algebra in the corresponding Morita class is given. The algebra structure is unique on all but the $E_8$ one, which nevertheless has a unique structure of a commutative algebra.}
	\label{tab:ADEclass}
\end{table}

\section{Reducibility of mapping class group representations}\label{sec:reducibility}
We spend the first section proving theorem \ref{thm:reducibility}, and in the second section we investigate in more detail the consequences of this theorem in the $\mathfrak{su}(2)$ case, concluding with a proof of theorem \ref{thm:su2}.
Fix notation as follows:
\begin{itemize}
	\item Let $\extS_g$ denote an extended surface of genus $g$ with no marked points
	\item For a given modular tensor category \cC{}, we denote by $\bl(\extS)$ the vector space corresponding to the extended surface $\extS$ and by $Z(\eCob)$ the invariant corresponding to the extended cobordism $\eCob$, given by the TQFT associated to \cC.
\end{itemize}

\subsection{Proof of theorem \ref{thm:reducibility}}\label{sec:redfromalg}
The strategy of the proof is to use Schur's lemma. Given a special symmetric Frobenius algebra
 with the required property, we explicitly construct an element $P_g\in\End(\bl(\extS_g))$ in the commutant of the representation of the mapping class group, and show that $P_g{\propto\!\!\!\!\! \slash\ } \id$.
The proof of theorem~\ref{thm:reducibility} requires some preparation, and
we begin by defining extended cobordisms whose invariants provide the elements $P_g$ in the commutant.\\

\noindent Let $T$ be a (Poincar\'{e}) dual triangulation of a surface $\extS$, i.e. an embedding of a trivalent graph in $\extS$ such that the Poincar\'{e} dual graph is a triangulation of $\extS$, and let $A$ be a symmetric special Frobenius algebra in a ribbon category \cC. By {\bf labelling T with $A$} we mean the following: Embed a (oriented and core-oriented) ribbon graph in $\extS$ such that each edge between two vertices of $T$ is covered by a ribbon labeled by $A$, and each trivalent vertex connects three edges by {\em one choice} of planar graph with coupons labelled by combinations of $m$, $\Delta$, $\varepsilon\circ m$, or $\Delta\circ\eta$, in such a way that any region of the graph is consistently interpreted as a morphism in \cC{} with the convention of letting core-orientation being the ''upwards'' direction.\\
There are thus many consistent ways of labelling $T$ with $A$, but it is not difficult to convince oneself that the properties {\em symmetry}, {\em specialness} and {\em Frobenius}
 are enough to guarantee that all possible consistent labellings are equivalent, as morphisms in \cC. An actual proof can be found by combining results from \cite{FRS1, FRS2, FjFRS1}.
By equipping each edge of $T$ with an orientation such that the edges at any vertex are either two in-going and one out-going, or one in-going and two out-going, there is an unambiguous way to label $T$ by $A$, where every trivalent vertex is covered with a coupon labelled either by $m$ or by $\Delta$. In the following we will indicate a particular labelling in this way.

\begin{definition}
Let $A$ be a symmetric, special Frobenius algebra in \cC, and let \extS$_g$ be an extended surface of genus $g$ with no marked points. Pick a dual triangulation $T$ of \extS$_g$, and consider the three-manifold $\eCob[\extS_g]\equiv \extS\times[-1,1]$ equipped with the natural orientation obtained from the orientation of \extS$_g$ and the interval. Define an extended cobordism $\eCob[\extS_g,A,T]:\extS_g\rightarrow\extS_g$ by labelling $T$ on $\extS_g\times\{0\}\subset\eCob[\extS_g]$ with $A$.
We define an element $P[\extS_g,A,T]$ of $\End(\bl(\extS_g))$ by
\be{Pdef}
P[\extS_g,A,T]\equiv Z(M[\extS_g,A,T]).
\ee
\end{definition}

\begin{proposition}(Proposition {\rm 3.2}, \cite{FjFRS1})
$P[\extS_g,A,T]$ is independent of the choice of dual triangulation $T$.
\end{proposition}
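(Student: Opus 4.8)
The plan is to reduce the statement to a finite list of local moves relating any two dual triangulations and then to verify that the labelled ribbon graph is invariant under each move using the algebraic properties of $A$. First I would invoke the combinatorial fact that any two dual triangulations of a fixed compact surface $\extS_g$ are connected by a finite sequence of elementary local moves. Passing to the Poincar\'e dual picture, these are the $2$--$2$ move (the flip of the edge shared by two triangles, which on the trivalent graph is the Whitehead/``I--H'' move) and the $1$--$3$/$3$--$1$ stellar moves (insertion or removal of a vertex inside a triangle, producing or removing a small bubble in the graph). That such moves suffice is the two-dimensional case of Pachner's theorem; translating through Poincar\'e duality expresses them as local replacements of the embedded trivalent graph $T$, each supported inside a small disk on $\extS_g\times\{0\}\subset\eCob[\extS_g]$.

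Since each move is supported in a disk and leaves both the rest of the graph and the rest of the cobordism untouched, it suffices to check that the morphism in \cC{} assigned to the affected local piece is unchanged, after which the TQFT invariant $Z(\eCob[\extS_g,A,T])$ cannot change. This is where the hypotheses on $A$ enter. For the $2$--$2$ flip, the two retriangulations are identified by associativity of $m$ together with the Frobenius relation of figure~\ref{fig:frob}: that relation, $(\id_A\oti m)\circ(\Delta\oti\id_A)=\Delta\circ m=(m\oti\id_A)\circ(\id_A\oti\Delta)$, is precisely the identity allowing one to slide a comultiplication past a multiplication and re-expand in the other fusion channel. For the $1$--$3$/$3$--$1$ move one inserts or removes a bubble built from an $m$ followed by a $\Delta$, together with the closed loop coming from $\varepsilon\circ\eta$; here the normalised specialness conditions $m\circ\Delta=\id_A$ and $\varepsilon\circ\eta=\dim(A)\,\id_\one$ of figure~\ref{fig:special} guarantee that bubble removal introduces no spurious scalar and that the change in the numbers of vertices, edges and faces is automatically compensated. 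Finally, whenever a move transports an edge orientation or a coupon past a duality morphism, the symmetry condition of figure~\ref{fig:sym} ensures that the two resulting morphisms agree.

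Having established invariance under each elementary move, I would conclude that $P[\extS_g,A,T]=Z(\eCob[\extS_g,A,T])$ takes the same value for all dual triangulations $T$, since any two are joined by such a sequence. The main obstacle is not any single algebraic identity, each being a short graphical computation, but the careful bookkeeping that makes the reduction rigorous: one must confirm that the moves are genuinely local so the ambient graph is left intact, that the embedding and framing data of the ribbon graph are tracked consistently across each replacement, and, most delicately, that the normalisation built into the special condition exactly absorbs the change in the combinatorial data of $T$ so that no global factor survives in the invariant. This is the content recorded as Proposition 3.2 of \cite{FjFRS1}, whose proof assembles precisely these local identities from \cite{FRS1,FRS2,FjFRS1}.
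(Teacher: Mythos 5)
Your proposal is correct and follows essentially the same route as the paper, which simply cites Proposition 3.2 of \cite{FjFRS1}: that proof is exactly the reduction to local moves on dual triangulations, with the fusion (I--H) move handled by associativity plus the Frobenius relation, the bubble move by normalised specialness, and orientation reversals by symmetry. The only minor imprecision is that the Poincar\'e dual of the $1$--$3$ stellar move is a vertex-to-triangle replacement rather than directly a bubble, but this decomposes into a fusion move followed by a bubble move, so your algebraic verification covers it.
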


\begin{proposition} $P[\extS_g,A,T]$ commutes with the action of the mapping class group: If $\rho$ denotes the representation on $\bl(\extS_g)$, we have $$P[\extS_g,A,T]\circ\rho(f)=\rho(f)\circ P[\extS_g,A,T]$$ for any mapping class $f$.
\end{proposition}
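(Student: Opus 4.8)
The statement to prove is that the operator $P[\extS_g,A,T]\in\End(\bl(\extS_g))$ commutes with the mapping class group action $\rho(f)$. The plan is to exploit the TQFT-functoriality of the invariant $Z(\eCob)$ together with the defining property of $P$ as the invariant of a cobordism carrying the $A$-labelled dual triangulation, and the already-established triangulation-independence of $P$. The key conceptual point is that the mapping class $f$ can be realised geometrically as a cobordism (the mapping cylinder of $f$), and gluing this mapping cylinder onto $\eCob[\extS_g,A,T]$ at either the incoming or the outgoing boundary produces two extended cobordisms that are \emph{homeomorphic as manifolds-with-ribbon-graph}, provided we are free to move the ribbon graph by the homeomorphism.

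First I would recall that $\rho(f)=Z(C_f)$, where $C_f:\extS_g\to\extS_g$ is the mapping cylinder of a representative homeomorphism $f$, so that the two sides of the claimed identity become invariants of the glued cobordisms $C_f\circ\eCob[\extS_g,A,T]$ and $\eCob[\extS_g,A,T]\circ C_f$. Here I would use that $Z$ is (projectively) functorial under gluing: $Z(\eCob_2\circ\eCob_1)\doteq Z(\eCob_2)\circ Z(\eCob_1)$, so composition of invariants corresponds to composition of cobordisms. The central step is then to observe that pushing the homeomorphism $f$ through the product cobordism $\extS_g\times[-1,1]$ carries the $A$-labelled dual triangulation $T$ (sitting on the slice $\extS_g\times\{0\}$) to an $A$-labelled dual triangulation $f(T)$ of the \emph{image} surface. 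Concretely, $C_f\circ\eCob[\extS_g,A,T]$ is homeomorphic, by an orientation-preserving homeomorphism restricting to the identity on the boundary, to $\eCob[\extS_g,A,f(T)]$, and similarly $\eCob[\extS_g,A,T]\circ C_f\cong\eCob[\extS_g,A,f^{-1}(T)]$.

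Since $Z$ depends only on the homeomorphism class of an extended cobordism, these homeomorphisms give
\begin{align}
\rho(f)\circ P[\extS_g,A,T]&=P[\extS_g,A,f(T)],\\
P[\extS_g,A,T]\circ\rho(f)&=P[\extS_g,A,f^{-1}(T)].
\end{align}
Now I invoke the preceding proposition: $P[\extS_g,A,T]$ is independent of the choice of dual triangulation $T$. Hence $P[\extS_g,A,f(T)]=P[\extS_g,A,T]=P[\extS_g,A,f^{-1}(T)]$, and combining with the two displayed identities yields $\rho(f)\circ P=P\circ\rho(f)$, as desired. I would note that any projective scalars arising from the anomaly of $Z$ cancel, because the same cobordism $P$ appears as a factor on both sides and the mapping cylinder contributes identically; thus the identity holds on the nose and not merely projectively.

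The main obstacle, and the point deserving the most care, is the geometric claim that the glued cobordism $C_f\circ\eCob[\extS_g,A,T]$ is homeomorphic to $\eCob[\extS_g,A,f(T)]$ \emph{as an extended cobordism}, i.e. respecting orientations, the embedded $A$-labelled ribbon graph, and any Lagrangian/boundary framing data that the extended-TQFT formalism records. One must check that $f$, being orientation preserving, transports the consistent $A$-labelling of $T$ to a consistent $A$-labelling of $f(T)$ (using that the labelling conventions are preserved under orientation-preserving homeomorphisms), and that the gluing of $C_f$ does not disturb the interior graph beyond applying $f$. Once this homeomorphism is in hand the rest is a formal consequence of functoriality and triangulation-independence.
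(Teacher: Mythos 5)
Your argument is correct, but it is organised quite differently from the paper's: the paper disposes of this proposition in one line by citing Theorem~2.2 of the reference [FjFRS1] (which asserts precisely the mapping-class-group invariance of correlators constructed from a symmetric special Frobenius algebra), whereas you reconstruct the underlying argument from scratch. Your route --- realise $\rho(f)$ as $Z(C_f)$ for the mapping cylinder, observe that gluing $C_f$ on either side of $\extS_g\times[-1,1]$ transports the $A$-labelled graph to $f(T)$ resp.\ $f^{-1}(T)$, and then collapse everything with the triangulation-independence proposition proved just before --- is exactly the mechanism behind the cited theorem, so the mathematics agrees; what you gain is a self-contained proof, at the cost of having to verify the geometric and anomaly bookkeeping yourself. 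On that last point your treatment is slightly loose: the clean way to see that no projective factor survives is to note that for a product cobordism $\extS_g\times I$ the induced Lagrangian correspondence is the identity, so the Maslov index governing the gluing anomaly vanishes for both compositions (the paper makes the analogous remark in a footnote when gluing onto handlebodies); your phrase ``the mapping cylinder contributes identically'' gestures at this but does not quite pin it down. With that detail tightened, your proof stands on its own and is a faithful expansion of what the paper delegates to the literature.
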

\begin{proof}
Follows immediately from Theorem~{\rm 2.2} of \cite{FjFRS1}.
\end{proof}

~\\
\noindent Since $P[\extS_g,A,T]$ is independent of triangulation we will drop the $T$. Furthermore, when it is clear from the context which algebra and surface is being used, we will abuse notation and simply write $P_g\equiv P[\extS_g,A]$.
For the remaining discussion we will choose a directed dual triangulation for each genus $g$ extended surface, as shown in figure~\ref{fig:genusgT}. As indicated, an orientation of the edges has been chosen, allowing an unambiguous labeling of the coupon covering any given vertex.
\begin{figure}[h]
\centering
	\includegraphics[scale=0.3]{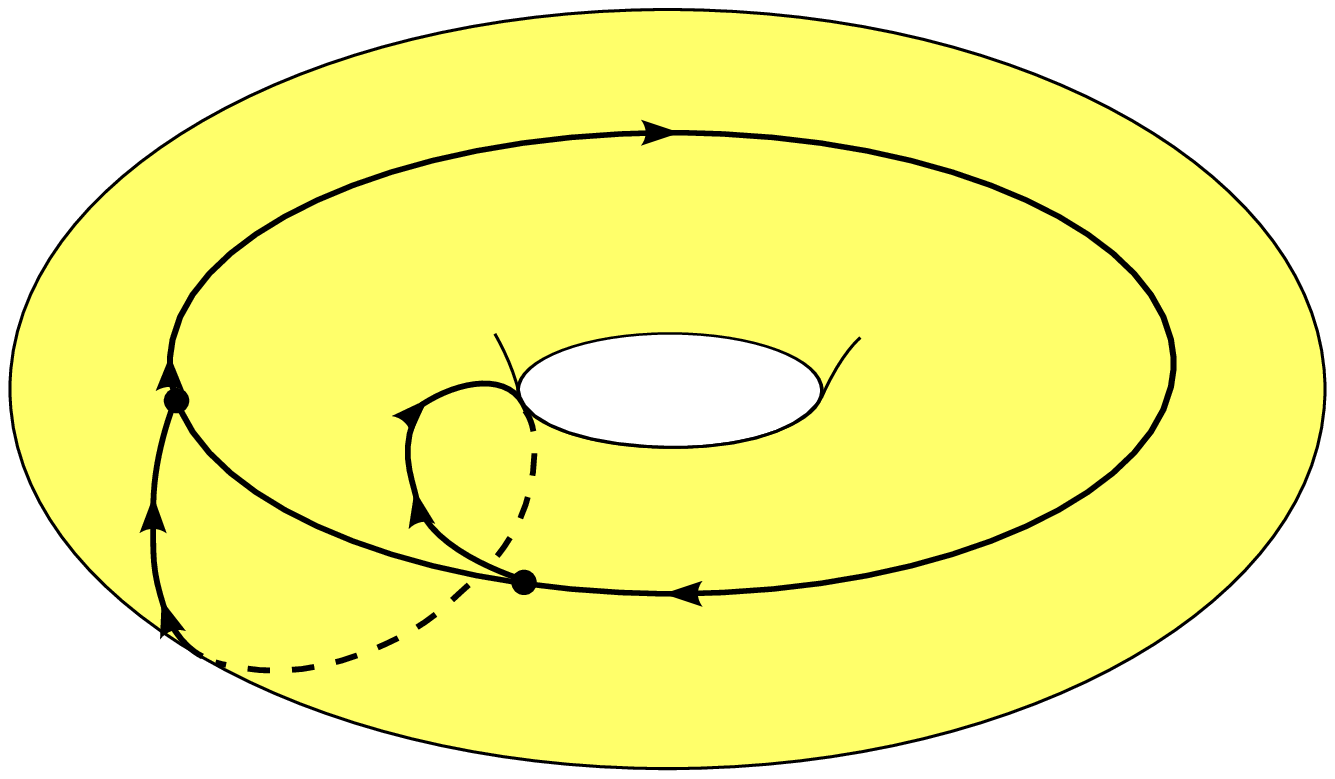}
	\includegraphics[scale=0.5]{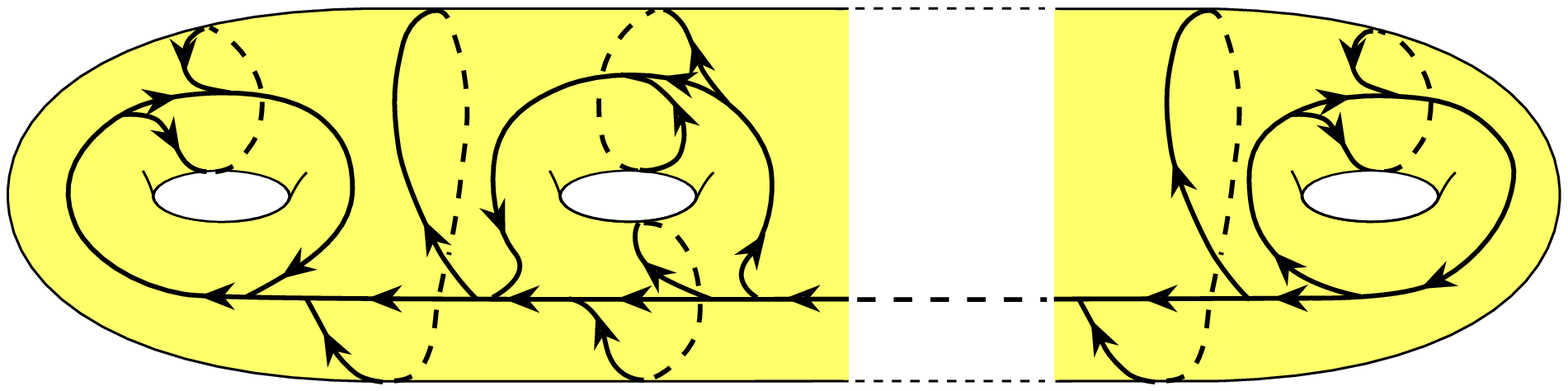}
	\caption{Our fix choice of dual triangulation of an extended surface of genus $g=1$, resp. $g\geq 2$}
	\label{fig:genusgT}
\end{figure}
Next we will see how $P_g$ acts on a standard basis element.
The standard basis of $\bl(\extS_g)$ is a tuple of (non-zero) $\lambda$'s given by a pants decomposition of \extS$_g$ by viewing $\extS_g$ as the boundary of a handle body $\mathrm{H}_g$ with an embedded ribbon graph, reduced by the corresponding pants decomposition of $\mathrm{H}_g$ to a tree as illustrated in figure~\ref{fig:standardbasis}.
\begin{figure}[h]
	\centering
	\includegraphics[scale=0.4, angle=90]{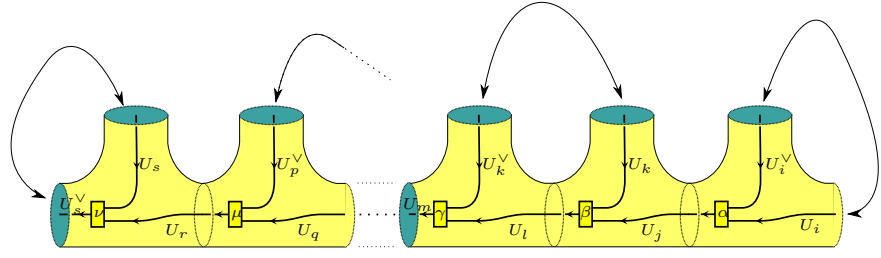}
	\put(-30,7)		{\tiny $U_i$}
	\put(-90,5)		{\tiny $U_j$}
	\put(-140,5)		{\tiny $U_l$}
	\put(-43,30)		{\tiny $U_i^\vee$}
	\put(-95,30)		{\tiny $U_k$}
	\put(-150,30)		{\tiny $U_k^\vee$}
	\put(-180,15)		{\tiny $U_m$}
	\put(-220,5)		{\tiny $U_q$}
	\put(-270,5)		{\tiny $U_r$}
	\put(-228,30)		{\tiny $U_p^\vee$}
	\put(-280,30)		{\tiny $U_s$}
	\put(-310,15)		{\tiny $U_s^\vee$}
	\put(-61,12)		{\tiny $\alpha$}
	\put(-113,12)		{\tiny $\beta$}
	\put(-168,12)		{\tiny $\gamma$}
	\put(-245,12)		{\tiny $\mu$}
	\put(-297,12)		{\tiny $\nu$}
	\caption{A tree graph inside a pants-decomposed handle body corresponding to the basis element $(\lambda_{(i\overline{\iota})j}^\alpha,\lambda_{(jk)l}^\beta,\lambda_{(l\overline{k})m}^\gamma,\ldots,\lambda_{(q\overline{p})r}^\mu,\lambda_{(rs)\overline{s}}^\nu)$.}
	\label{fig:standardbasis}
\end{figure}

The action of $P_g$ on a standard basis element is given by glueing the corresponding cobordisms and taking the invariant.~\footnote{It is obvious that the corresponding Maslov index vanishes in this simple case.} In the present case, the resulting cobordism is again a handlebody, but with the following ribbon graph inserted.
\begin{figure}[h]
	\centering
	\includegraphics[scale=0.5, angle=90]{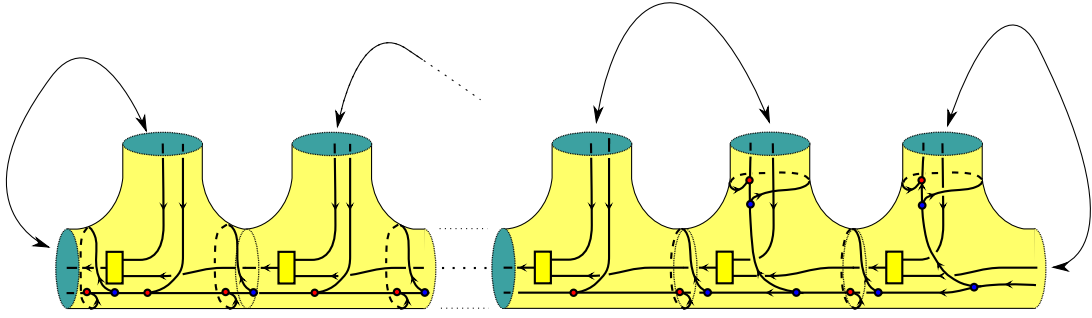}
	\caption{The extended cobordism resulting from glueing $\eCob[\extS_g,A,T]:\extS_g\rightarrow\extS_g$  onto the handlebody shown in figure~\ref{fig:standardbasis}. Labels are supressed.}
	\label{fig:PonH}
\end{figure}

Note that the ribbon graph represents a morphism involving a number of projectors $P_A^l(U_i)$. Consider a tubular section of the handle body in figure~\ref{fig:PonH} containing only such a projector. Using the representation $P_A^l(U_i)=e_i\circ r_i$ we conclude

\begin{figure}[h]
	\centering
	\begin{picture}(300,100)(0,0)
		\put(50,0)		{\includegraphics[scale=0.3]{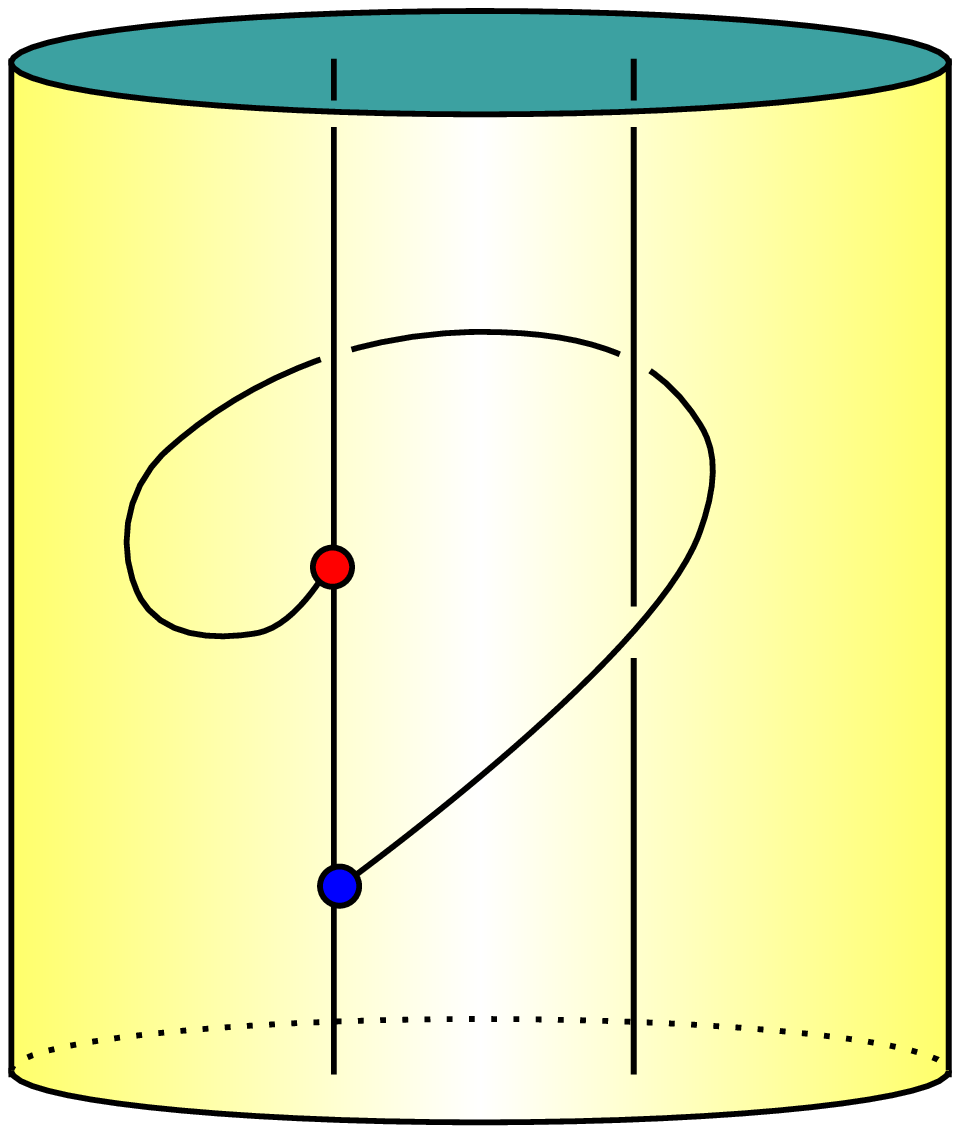}}
		\put(70,30)	{\scriptsize $A$}
		\put(107,30)	{\scriptsize $U_i$}
		\put(160,50)	{$\sim$}
		\put(200,0)	{\includegraphics[scale=0.3]{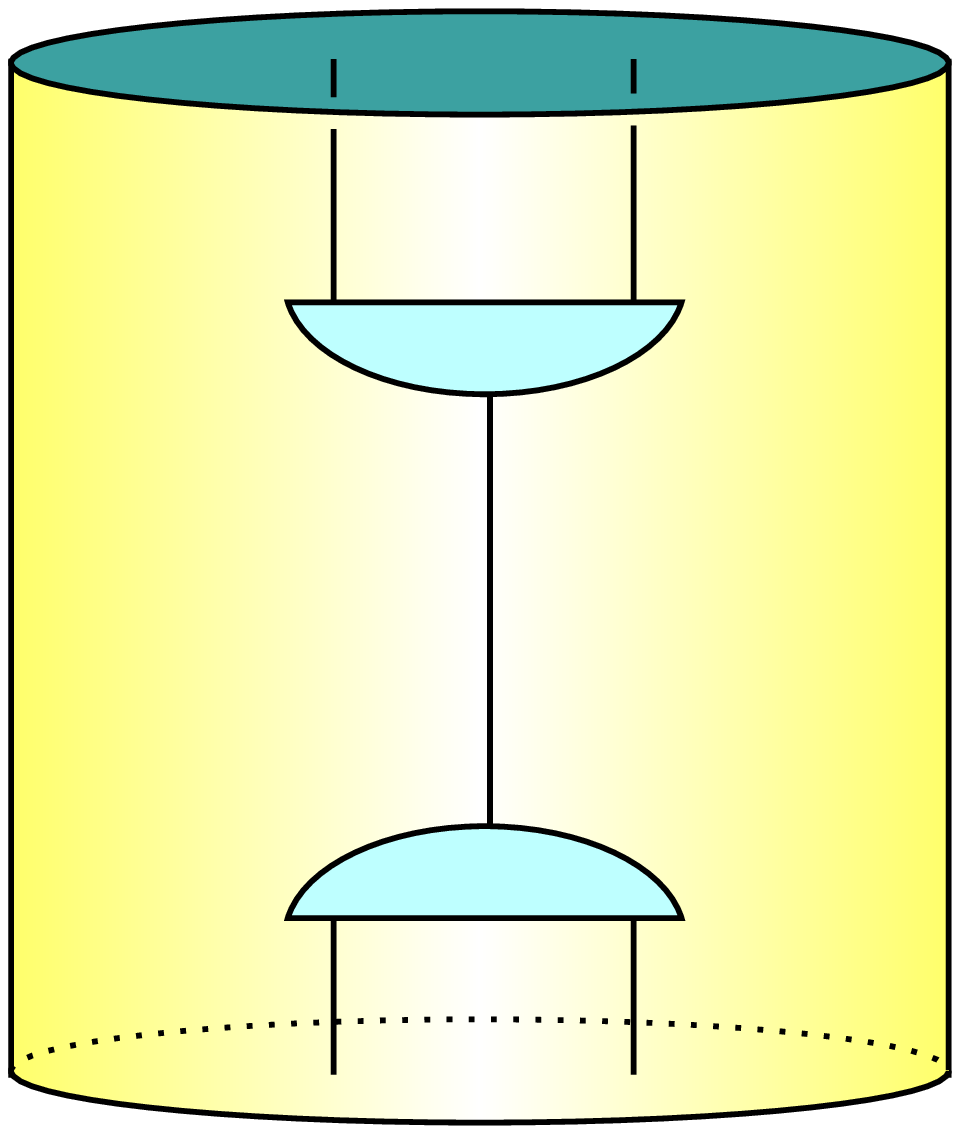}}
		\put(220,10)	{\scriptsize $A$}
		\put(257,10)	{\scriptsize $U_i$}
		\put(220,80)	{\scriptsize $A$}
		\put(257,80)	{\scriptsize $U_i$}
		\put(245,45)	{\scriptsize $E^l_A(U_i)$}
	\end{picture}
	\caption{The projectors $P^l_A(U_i)$ inside the handlebody can be represented by a composition $e_i\circ r_i$.}
	\label{fig:ProjIm}
\end{figure}

where $\sim$ means that the invariants of two extended cobordisms differing only in the depicted region, coincide.
Apply this to every occurence of $P_A^l$ inside the handle body, and slide each of the $e_i$ and $r_j$ morphisms towards the vertices of the ribbon graph. Locally, in a tubular section around a vertex labelled by $\lambda^\alpha_{(ij)k}\in\Hom(U_i\oti U_j,U_k)$, the extended cobordism then takes one of the two forms shown in figure~\ref{fig:Vertex}.

\begin{figure}[h]
	\centering
	\begin{picture}(300,100)(0,0)
		\put(50,0)		{\includegraphics[scale=0.3]{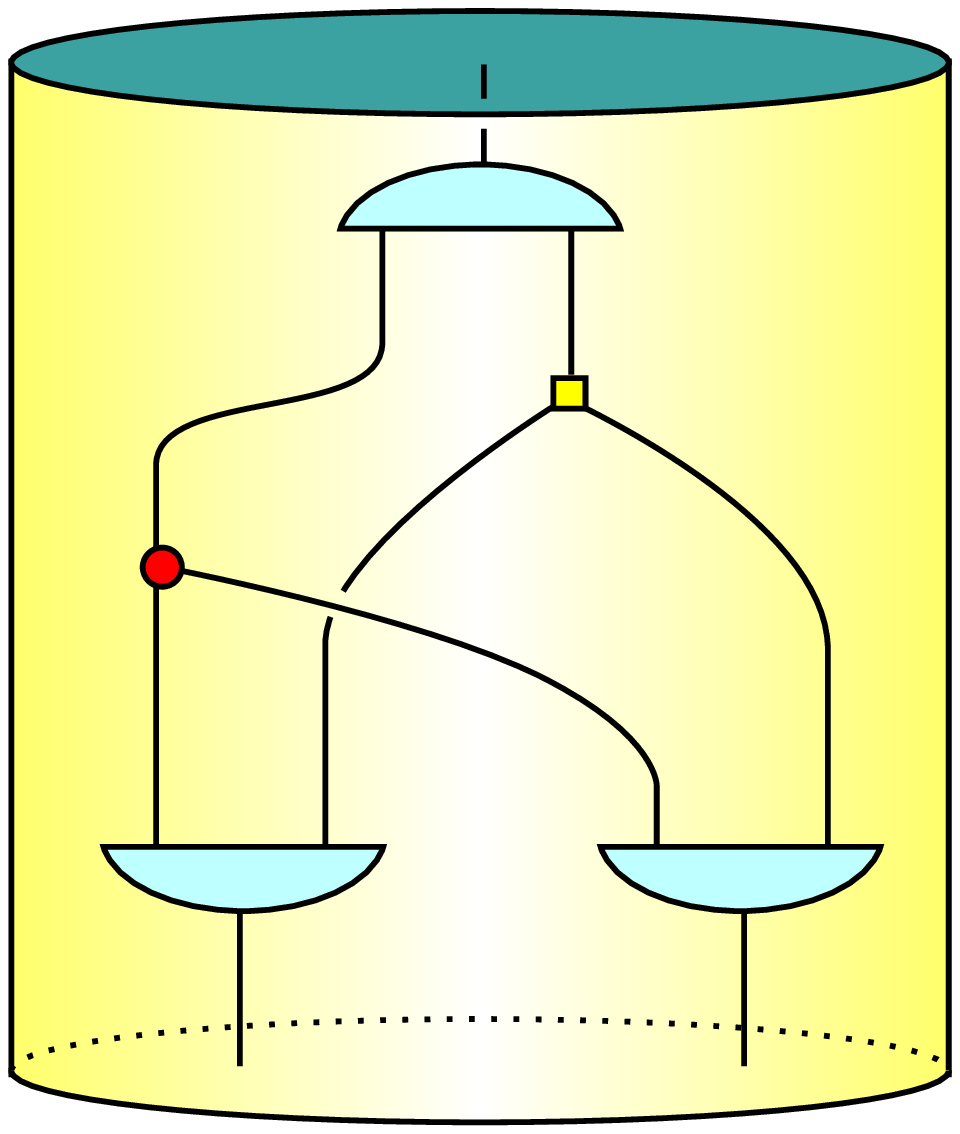}}
		\put(200,0)	{\includegraphics[scale=0.3]{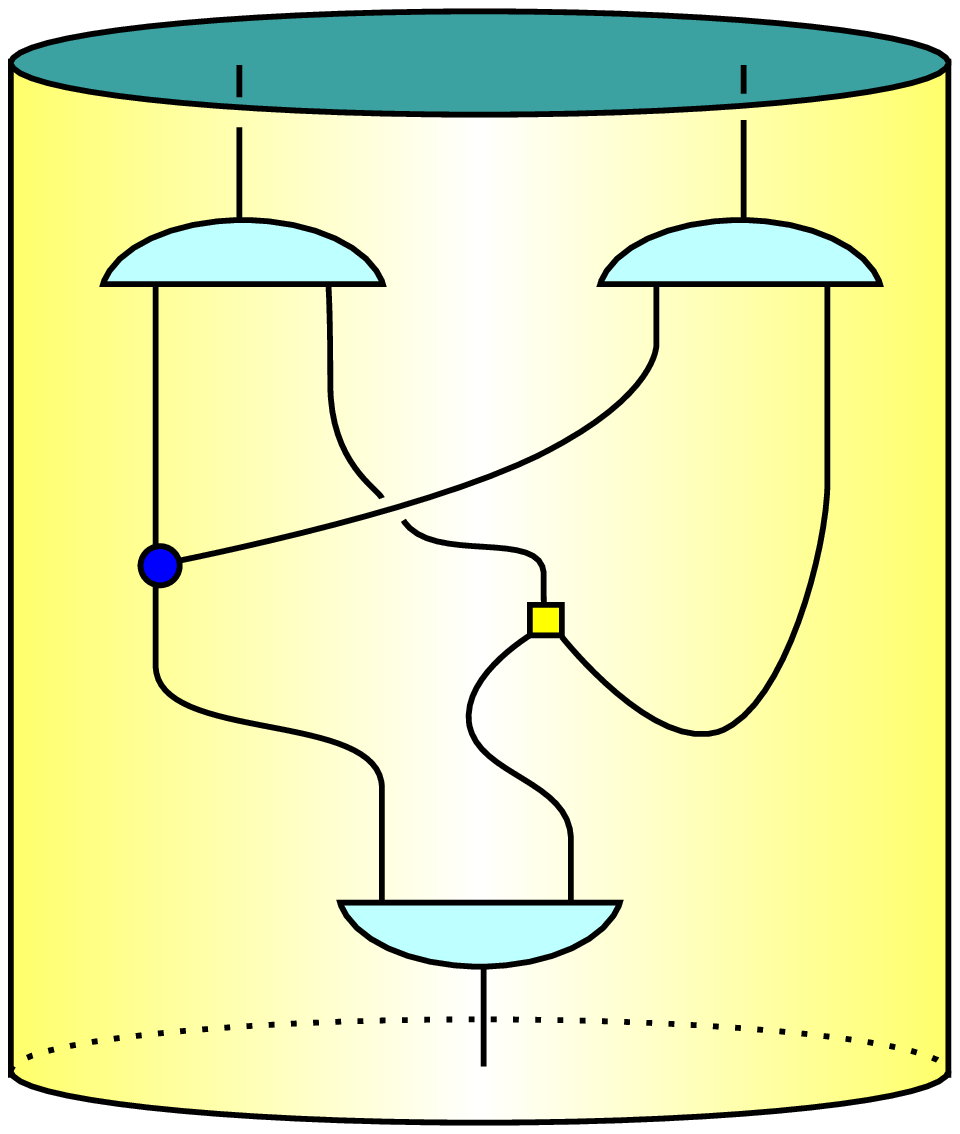}}
	\end{picture}
	\caption{The two possibilities of neighbourhoods around trivalent morphisms.}
	\label{fig:Vertex}
\end{figure}

Since the endofunctor $E^l_A$ is not a tensor functor, it is not straightforward to reduce the corresponding morphisms further.
Note, however, that if all three objects $U_i$, $U_j$, and $U_k$ are the tensor unit, the morphisms correspond to those
defining the multiplication and comultiplication of $C_l(A)$ (up to a factor of $dim(C_l(A))/dim(A)$ for the comultiplication).~\\[1ex]

\noindent For any $g\geq 1$ we define a special element $v^g_U\in\bl(\extS_g)$ for all $U\in \Obj(\cC)$ (if $U$ is simple, abbreviate $v^g_i\equiv v^g_{U_i}$) as follows. Consider the cobordism of the type shown in figure~\ref{fig:standardbasis} with all but one of the objects the tensor unit (s.t. the corresponding ribbons can be completely left out in the handle body), and all morphisms given by unit constraints (since we're considering a strict category, these are all identity morphisms), and take the ''last'' object to be $U$. Denote the resulting cobordism, shown in figure~\ref{fig:basisel}, by $M^g_U$, and define $v^g_U$ to be the corresponding element of the standard basis. Note that $v^1_i=\chi_i$ with the notation used in section~\ref{ssec:algebras}.

\begin{figure}[h]
	\centering
	\includegraphics[scale=0.5, angle=90]{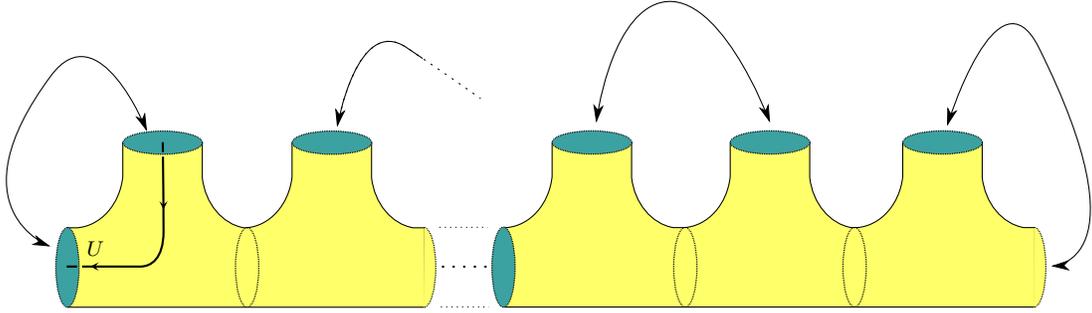}
	\put(-380,20)	{\scriptsize $U$}
	\caption{The cobordism representing the element $v^g_U$.}
	\label{fig:basisel}
\end{figure}

It follows immediately that $P_1(v^1_i)$ is precisely $v^1_{E^l_A(U_i)}$.
When $g\geq 2$, the element $P_g(v_i)$ is represented by a ribbon graph in a handle body where, following the discussion above, all but one of the ribbons are labelled by $C_l(A)$ and all but one of the coupons are labelled by multiplications and comultiplications of $C_l(A)$. The "last" ribbon is labelled by the object $E^l_A(U_i)$, and a neighbourhood of the last coupon contains a graph representing the following morphism.

\begin{figure}[h]
	\centering
	\begin{picture}(80,100)(0,0)
	\put(0,0)	{\includegraphics[scale=0.3]{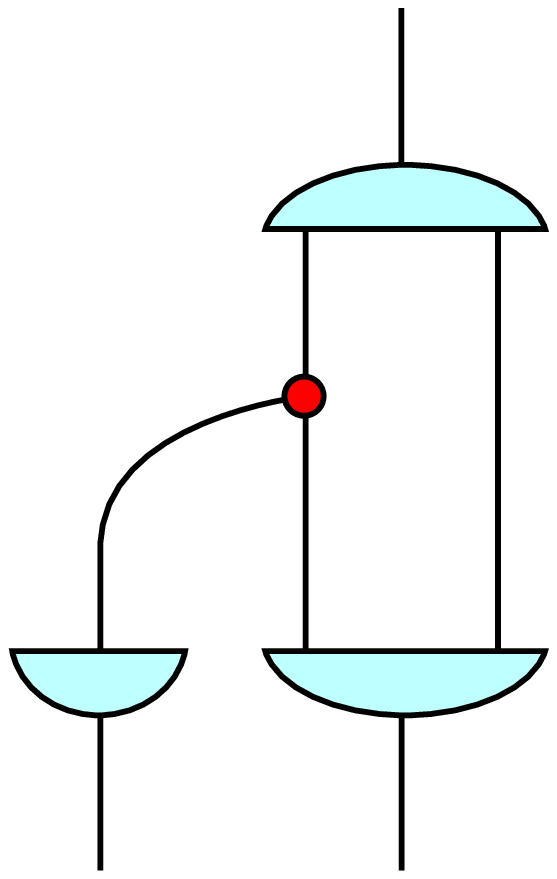}}
	\put(0,-8)	{\scriptsize $C_l(A)$}
	\put(30,-8)	{\scriptsize $E^l_A(U_i)$}
	\put(15,80)	{\scriptsize $E^l_A(U_i)$}
	\end{picture}
	\be{repmorph}
	\ee
	\label{fig:repmorph}
\end{figure}

We are now ready to prove the theorem.\\

\noindent Proof of Theorem ~\ref{thm:reducibility}:\\
Without loss of generality we can assume that $A$ is haploid.
Since $A$ gives a non-trivial torus partition function, according to \eqref{eq:modinv} there exists a simple object $U_i$ s.t. $E^l_A(U_i)\ncong U_i$ and is also not the zero-object. Take $v^g_i\in\bl(\extS_g)$ as above corresponding to this object.
If there does not exist any $\zeta\in\C$ such that $P_g(v^g_i)=\zeta v^g_i$ then, since $P_g$ commutes with $\rho$, Schur's lemma implies that $\rho$ is reducible.

The element $P_g(v^g_i)$ is represented by a ribbon graph in a handle body obtained by gluing $\eCob[\extS_g,A,T]$ to $\eCob^g_{U_i}$. Represent the morphisms $P^l_A$ and $P^l_A(U_i)$ as $e_{C_l(A)\prec A}\circ r_{C_l(A)\prec A}$ and $e_{E^l_A(U_i)\prec A}\circ r_{E^l_A(U_i)\prec A}$ respectively, and slide the corresponding coupons towards the vertices of the ribbon graph.
For $g=1$, this leaves a single ribbon labelled by $E^l_A(U_i)$ with the endomorphism $r_{E^l_A(U_i)\prec A}\circ e_{E^l_A(U_i)\prec A}=\id_{E^l_A(U_i)}$, and we have $P_1(v^1_i)=v^1_{E^l_A(U_i)}$ which by assumption is 
not proportional to $v^1_i$.\\
For $g>1$ we get a ribbon graph where all but one of the ribbons are labelled by the left center $C_l(A)$, and all but one of the coupons are labelled by either the multiplication $m_{C_l(A)}$ or comultiplication $\Delta_{C_l(A)}$. The remaining ribbon and coupon are labelled by the object $E^l_A(U_i)$ resp. the morphism \eqref{eq:repmorph}. Decompose $C_l(A)$ in simple subobjects, and $m_{C_l(A)}$ and $\Delta_{C_l(A)}$ into the corresponding components. Since $A$ is haploid, so is $C_l(A)$, and the decomposition therefore contains a {\em unique} component where all but one of the objects is $\one$, and all but one of the morphisms are either $(m_{C_l(A)}) _{00}^{\phantom{00}0}\id_\one$ or $(\Delta_{C_l(A)}) _{0}^{\phantom{0}00}\id_\one$. According to \eqref{eq:unitcomp}, both of these morphisms are exactly $\id_\one$. The corresponding component of the morphism \eqref{eq:repmorph} reduces to
\begin{figure}[h]
	\centering
	\includegraphics[scale=0.3]{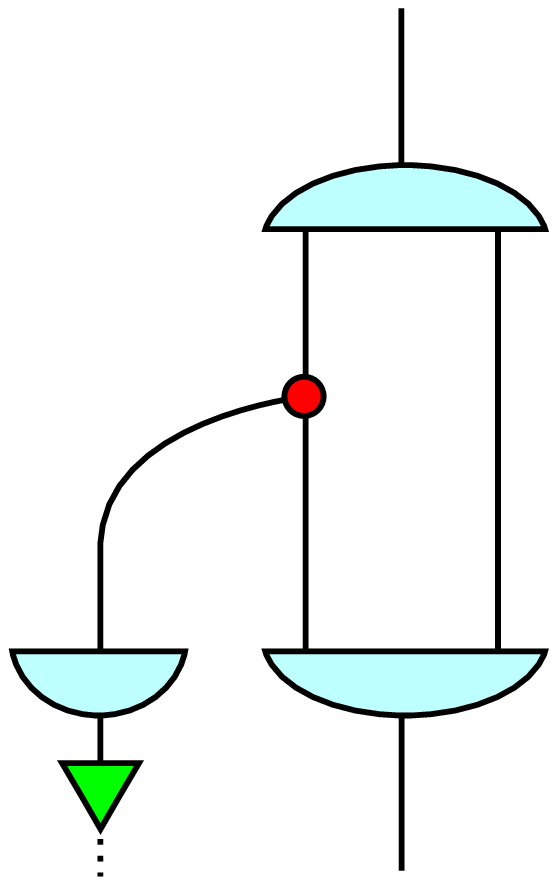}
		\put(-40,-8)	{\scriptsize $\one$}
	\put(-20,-8)	{\scriptsize $E^l_A(U_i)$}
	\put(-30,80)	{\scriptsize $E^l_A(U_i)$}
	\be{repmcomp}
	\ee
\end{figure}

\noindent Since both $A$ and $C_l(A)$ are haploid, it follows that
$e_{C_l(A)\prec A}\circ e_{\one\prec C_l(A)}=\xi\eta$ for some $\xi\in\C^\times$. Left unitality of $A$, together with the identity $r_{E^l_A(U_i)\prec A}\circ e_{E^l_A(U_i)\prec A}=\id_{E^l_A(U_i)}$, implies that \eqref{eq:repmcomp} is just $\xi\id_{E^l_A(U_i)}$.
The induced decomposition of $P_g(v^g_i)$ thus contains a term $\xi v^g_{E^l_A(U_i)}$, which by assumption is not proportional to $v^g_i$. The uniqueness of the component considered above in the decomposition implies that  $P_g(v^g_i)-\xi v^g_{E^l_A(U_i)}$ is either zero or linearly independent of $\{v^g_{E^l_A(U_i)},v^g_i\}$, and $P_g(v^g_i)$ is therefore not proportional to $v^g_i$.
We have thus shown that $P_g$ is not proportional to the identity.$\quad \Box$

\subsection{Examples}
There is a rather extensive list of known modular invariants (coefficients $Z_{ij}$ related to some modular category) in the litterature, so there is potentially a similar list of TQFT's with reducible mapping class group representations according to theorem~\ref{thm:reducibility}. One aspect, however, keeps us from immediately drawing this conclusion. The theorem states that the modular invariant $Z_{ij}$ must arise from a symmetric special Frobenius algebra, which according to~\cite{FjFRS1,FjFRS2} is equivalent to saying that the coefficients $Z_{ij}$ arise as the coefficients of the torus partition function in some (open/closed) rational conformal field theory. This is certainly not known for a large set of modular invariants.
As mentioned in the introduction, there is a large class of modular invariants that are known to be realised in terms of symmetric special Frobenius algebras, the simple curent invariants. It's important to point out that there are slightly different notions of simple current invariants in the litterature, and the relevant notion for this discussion is that presented (and completely classified) in~\cite{KrS}. It was shown in~\cite{FRS3} that there is a symmetric special Frobenius algebra realising every such simple current invariant.
We briefly discuss the cases listed in the corollary to theorem~\ref{thm:reducibility}.

\noindent
For the $SU(2)$ theories we refer to the next section. For $SU(3)$, Ocneanu has announced~\cite{Ocneanu} that all modular invariants (which are completely classified) have been realised in terms of Q-systems in the subfactor language. It is interesting to note that there exist non-trivial torus partition functions for all levels greater than or equal to $3$ in the $SU(3)$ case~\cite{Gannon}.
The series of $SU(N)$ models appear in the corollary due to the appearance of simple current invariants and the result discussed above. In addition Q-systems have been constructed corresponding to simple current invariants for $SU(N)$, see~\cite{BE2}.

\noindent
Apart from these examples one should mention that all modular invariants of the unitary minimal models have been realised~\cite{BE2}, as have those for the rational extended $U(1)$ theories~\cite{FRS1}. In addition, a number of modular invariants in theories corresponding to quantum doubles of the finite groups $S_3$ and $\Z/n\Z$ are realised in~\cite{EvPinto}.\\

\noindent
Apart from the models listed above, all modular invariants have been found for the $(A_1+A_1)^{(1)}$ series, the $(U(1)+\ldots +U(1))^{(1)}$ series, and for $A_r^{(1)}$, $B_r^{(1)}$, and $D_r^{(1)}$ for all ranks and levels $k\leq 3$. In the light of theorem~\ref{thm:reducibility}, quantum representations in TQFT's based on these affine algebras are expected to be generically reducible for all genus.

\subsection{The $\mathfrak{su}(2)$ Case}\label{ssec:su2case}
The structures that allow us to efficiently investigate how the representations decompose in explicit examples are those of direct sum and tensor product of algebras in \cC.
\begin{definition}
	Let $A\equiv(A,m_A,\eta_A)$ and $B\equiv(B,m_m,\eta_B)$ be algebras in \cC.
	\begin{itemize}
	\item The tensor product $A\boxtimes B$ of $A$ and $B$ is the algebra $(A\otimes B,(m_A\otimes m_B)\circ(\id_A\otimes c_{B,A}\otimes\id_B),\eta_A\otimes\eta_B)$
	\item The direct sum $A\boxplus B$ of $A$ and $B$ is the algebra $(A\boxplus B,m_{A\oplus B},\eta_{A\boxplus B})$, where $m_{A\boxplus B}=e_{A\prec A\oplus B}\circ m_A\circ(r_{A\prec A\oplus B}\oti r_{A\prec A\oplus B}) + e_{B\prec A\oplus B}\circ m_B\circ(r_{B\prec A\oplus B}\oti r_{B\prec A\oplus B})$ and $\eta_{A\boxplus B}=e_{A\prec A\oplus B}\circ \eta_A\circ r_{A\prec A\oplus B} + e_{B\prec A\oplus B}\circ \eta_B\circ r_{B\prec A\oplus B}$
	\end{itemize}
\end{definition}
It is straightforward to check that the morphisms satisfy the required properties of associativity and unitality. The corresponding definitions for coalgebras are analogous, with the coproduct of $A\boxtimes B$ being $(\id_A\otimes c^{-1}_{A,B}\otimes\id_B)\circ(\Delta_A\otimes\Delta_B)$.
With these definitions for algebras and coalgebras, the tensor product and direct sum of two Frobenius algebras is again Frobenius, and the same holds for the properties (normalized) special and symmetric.
\begin{remark}
Note that the braiding gives two possible structures of multiplication and comultiplication on the object $A\otimes B$, we have simply chosen one for each such that the Frobenius and special properties are preserved.
\end{remark}
The following proposition follows from a straightforward generalisation of proposition 5.3 of \cite{FRS1}.
\begin{proposition}\label{prop:Pmultadd}~\\
	\begin{itemize}
		\item[a)] $P_g[A\boxtimes B] = P_g[A]\circ P_g[B]$
		\item[b)] $P_g[A\boxplus B] = P_g[A] + P_g[B]$
	\end{itemize}
\end{proposition}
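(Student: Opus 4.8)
The plan is to compare, in each case, the two invariants of extended cobordisms directly, exploiting the freedom (guaranteed by the triangulation-independence result, Proposition~3.2 of \cite{FjFRS1}) to use one and the same directed dual triangulation $T$ of $\extS_g$ for all three algebras involved. In both parts the idea is to substitute the explicit structure morphisms of $A\boxtimes B$ (resp.\ $A\boxplus B$) into the ribbon graph labelling $T$ inside $\eCob[\extS_g]$, and then to recognise the resolved graph either as a stacking (composition) or as a formal sum of the graphs labelled by $A$ and by $B$ separately.

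\textbf{Part (b).} I would first treat the direct sum, which is purely combinatorial. By definition every structure morphism of $A\boxplus B$ — multiplication, comultiplication, unit, and counit — is block diagonal with respect to $A\oplus B$, i.e.\ a sum of an $A$-term built from $e_{A\prec A\oplus B}$, $r_{A\prec A\oplus B}$ and the $A$-structure, and an analogous $B$-term, with no mixed terms. Labelling $T$ with $A\boxplus B$ and resolving each $A\oplus B$ ribbon into its two summands, this block diagonality forces, at every trivalent coupon, all three incident edges to carry the \emph{same} summand. Since $\extS_g$ is connected the trivalent graph $T$ is connected, so the only colourings contributing a non-zero morphism are the globally-$A$ and the globally-$B$ one. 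These reproduce exactly the cobordisms $\eCob[\extS_g,A,T]$ and $\eCob[\extS_g,B,T]$, and linearity of the invariant $Z$ then yields $P_g[A\boxplus B]=P_g[A]+P_g[B]$.

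\textbf{Part (a).} For the tensor product each edge of $T$ is now labelled by $A\oti B$, which I would picture as a pair of parallel strands, one labelled $A$ and one labelled $B$. Expanding the multiplication $m_{A\boxtimes B}=(m_A\oti m_B)\circ(\id_A\oti c_{B,A}\oti\id_B)$ and the analogous comultiplication at every coupon shows that the $A$-strands by themselves assemble into the graph labelling $T$ with $A$, the $B$-strands into the graph labelling $T$ with $B$, and that these two sub-networks are linked only through the braidings $c_{B,A}$. The plan is then to invoke isotopy invariance of $Z$ to pull the entire $B$-sub-network into the collar $\extS_g\times[-1,0]$, beneath the $A$-sub-network living in $\extS_g\times[0,1]$; the uniform braiding $c_{B,A}$ at every vertex is precisely what permits this separation without introducing spurious twists or crossings. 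The resulting cobordism is the gluing of $\eCob[\extS_g,A,T]$ onto $\eCob[\extS_g,B,T]$, whose invariant is $P_g[A]\circ P_g[B]$ by functoriality of $Z$, establishing (a).

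\textbf{Main obstacle.} The routine content is part (b); the real work is the separation argument in (a). The delicate point is to upgrade the \emph{local} braiding resolution at a single coupon — which is essentially Proposition~5.3 of \cite{FRS1} — to a \emph{global} statement, verifying that the $B$-network can be isotoped below the $A$-network coherently and simultaneously over all of $\extS_g$, and that the uniform choice $c_{B,A}$ (rather than its inverse) produces the composition in the stated order $P_g[A]\circ P_g[B]$ with no residual framing anomaly. As in the genus-one computation noted above, one should also check that the associated Maslov index, and hence the projective anomaly factor of $Z$, stays trivial throughout this manipulation of mapping-cylinder-type cobordisms.
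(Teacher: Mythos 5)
Your argument is correct and is essentially the paper's own: the paper proves this proposition simply by asserting that it "follows from a straightforward generalisation of proposition 5.3 of \cite{FRS1}", and your two steps — resolving $A\oplus B$ edge-by-edge and using connectedness of the dual triangulation to kill mixed colourings for (b), and separating the $A$- and $B$-networks of $A\boxtimes B$ into two parallel layers of the cylinder via the uniform braiding $c_{B,A}$ for (a) — are exactly the content of that generalisation. The obstacles you flag (global coherence of the layer separation and triviality of the Maslov index for these mapping-cylinder gluings) are the right ones and are handled the same way as in the genus-one case of \cite{FRS1}.
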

Equation 3.37 of \cite{FrFRSduality} can be specialized to the following
\begin{proposition}\label{prop:PMorita}
Let $A$ and $B$ be two Morita equivalent symmetric special Frobenius algebras in \cC, then
$$P_g[A]=\gamma^{-\chi(\extS_g)/2} P_g[B]$$ where $\gamma=\dim(B)/\dim(A)$. In particular we have $Z_{ij}(A)=Z_{ij}(B)$.
\end{proposition}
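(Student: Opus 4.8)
The plan is to obtain Proposition~\ref{prop:PMorita} as the specialisation of the general transformation formula~(3.37) of \cite{FrFRSduality} to the particular cobordisms $\eCob[\extS_g,A,T]$, the dimension prefactor being fixed by an Euler-characteristic count. First I would recall that, by definition, $P_g[A]=Z(\eCob[\extS_g,A,T])$ is the TQFT invariant of $\extS_g\times[-1,1]$ carrying, on the middle surface $\extS_g\times\{0\}$, the ribbon graph obtained by labelling a dual triangulation $T$ with $A$; by the triangulation-independence recorded above we are free to fix any convenient $T$, and both $P_g[A]$ and $P_g[B]$ are genuine endomorphisms of the same space $\bl(\extS_g)$, since the latter depends only on \cC.

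A Morita equivalence between $A$ and $B$ is witnessed by an $A$-$B$-bimodule $X$ and a $B$-$A$-bimodule $Y$ with $X\otimes_B Y\cong A$ and $Y\otimes_A X\cong B$ as bimodules. In the graphical calculus these bimodules are exactly the (invertible) defect lines separating an $A$-labelled region from a $B$-labelled one, and the two bimodule isomorphisms are the local moves that allow such a defect to be created, fused and removed. This is precisely the data to which formula~(3.37) of \cite{FrFRSduality} applies. Next I would perform the Morita sweep: starting from the $A$-labelled network covering $\extS_g\times\{0\}$, introduce a closed $X$/$Y$-defect network dual to $T$ and use the bimodule relations to push it across the entire surface, thereby converting the $A$-labelled graph into a $B$-labelled graph representing $Z(\eCob[\extS_g,B,T])=P_g[B]$. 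Each elementary move is an equality of invariants up to a scalar coming from the specialness normalisations $\varepsilon\circ\eta=\dim(A)\,\id_\one$ and $m\circ\Delta=\id_A$ together with their $B$-counterparts.

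The main obstacle is the bookkeeping of these scalars and showing that their product is independent of all choices. The factors localise on the cells of $T$: writing $V$, $E$ and $F$ for the numbers of vertices, edges and faces, the specialness conditions arrange that the vertex and edge contributions cancel in the combination forced by $m\circ\Delta=\id$, so that a net power $\dim(A)^{aV+bE+cF}$ remains in which only the Euler-characteristic combination $V-E+F=\chi(\extS_g)$ survives, and likewise for $B$. Taking the ratio and using $\gamma=\dim(B)/\dim(A)$ produces exactly the claimed prefactor $\gamma^{-\chi(\extS_g)/2}$; this is the content of specialising~(3.37), and it is here that the delicate point lies, since one must verify that the sweep genuinely realises a global re-decoration and that the accumulated scalar is exactly this power of $\gamma$.

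Finally, taking $g=1$ gives $\chi(\extS_1)=0$, hence $\gamma^{-\chi(\extS_1)/2}=1$ and $P_1[A]=P_1[B]$. Since for $g=1$ the operator $P_1[A]$ is precisely the torus partition function $Z(A)$, whose matrix in the basis $\{\chi_i\}$ is $(Z_{ij}(A))$ by the $g=1$ instance of \eqref{eq:modinv} (indeed $P_1(\chi_i)=\chi_{E^l_A(U_i)}=\sum_j Z_{ij}\,\chi_j$), the equality of operators forces equality of matrices, giving $Z_{ij}(A)=Z_{ij}(B)$.
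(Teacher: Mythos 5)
Your proposal follows essentially the same route as the paper, which offers no independent argument but simply states that equation~(3.37) of \cite{FrFRSduality} specialises to the claimed identity; your elaboration of the Morita defect sweep, the Euler-characteristic bookkeeping of the specialness normalisations, and the $g=1$ specialisation yielding $Z_{ij}(A)=Z_{ij}(B)$ is a correct and more detailed account of that same specialisation.
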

The next result has been announced in~\cite{FRS1}.
\begin{proposition}\label{prop:Moritastability}
Let $\sim$ denote Morita equivalence, and let $A$, $A'$, $B$, $B'$ be algebras in \cC{} such that $A\sim A'$, $B\sim B'$. Then
	\begin{itemize}
		\item[a)] $A\boxtimes B \sim A'\boxtimes B'$
		\item[b)] $A\boxplus B \sim A'\boxplus B'$
	\end{itemize}
\end{proposition}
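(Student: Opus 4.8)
The plan is to exhibit, in each case, the invertible bimodules that witness Morita equivalence and to show that they behave well under $\boxtimes$ and $\boxplus$. Recall that $A\sim A'$ means precisely that there exist an $A$-$A'$-bimodule $M$ and an $A'$-$A$-bimodule $N$ together with bimodule isomorphisms $M\oti_{A'}N\cong A$ and $N\oti_A M\cong A'$; fix such data $(M,N)$ for $A\sim A'$ and analogously $(X,Y)$ for $B\sim B'$, where $X$ is a $B$-$B'$-bimodule and $Y$ a $B'$-$B$-bimodule. Since \cC{} is semisimple the relative tensor product $M\oti_{A'}N$ can be realised as the image of the idempotent on $M\oti N$ obtained by averaging the balancing $A'$-action, and it is this description that I would use throughout.

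For part a) I would equip the object $M\oti X$ with the structure of an $A\boxtimes B$-$A'\boxtimes B'$-bimodule, defining the left action as $(\ell_M\oti\ell_X)\circ(\id_A\oti c_{B,M}\oti\id_X)$ and the right action by the mirror-image prescription, in exact analogy with the multiplication on $A\boxtimes B$; the bimodule axioms then follow from those for $M$ and $X$ together with naturality and the hexagon identities for the braiding. Call this bimodule $M\boxtimes X$, and build $N\boxtimes Y$ in the same way. The crucial step is the \emph{decoupling} isomorphism
$$(M\boxtimes X)\oti_{A'\boxtimes B'}(N\boxtimes Y)\cong (M\oti_{A'}N)\oti(X\oti_{B'}Y)$$
of $A\boxtimes B$-bimodules. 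Using the idempotent description this amounts to showing that the balancing idempotent for the $A'\boxtimes B'=A'\oti B'$-action on $M\oti X\oti N\oti Y$ factorises, after reordering the tensor factors by the braiding, as the tensor product of the balancing idempotents for the $A'$-action on $M\oti N$ and the $B'$-action on $X\oti Y$; taking images then gives the claim. Combining this with the fixed isomorphisms $M\oti_{A'}N\cong A$ and $X\oti_{B'}Y\cong B$ yields $(M\boxtimes X)\oti_{A'\boxtimes B'}(N\boxtimes Y)\cong A\oti B=A\boxtimes B$, and the symmetric computation gives the other composite, so $M\boxtimes X$ is invertible and $A\boxtimes B\sim A'\boxtimes B'$.

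For part b) I would take $M\boxplus X\equiv M\oplus X$ as an object, with the $A\boxplus B$-$A'\boxplus B'$-bimodule structure given by the block prescription used to define $\boxplus$, so that the $B$-summand of $A\boxplus B$ acts as zero on $M$ and the $A$-summand as zero on $X$, and likewise on the right. Here the key observation is that the relative tensor product over $A'\boxplus B'$ of the block bimodules decomposes as a direct sum in which the off-diagonal terms vanish: since $M$ is supported on the $A'$-block on the right and $Y$ on the $B'$-block on the left, the balancing idempotent kills $M\oti Y$ and $X\oti N$, leaving $(M\boxplus X)\oti_{A'\boxplus B'}(N\boxplus Y)\cong (M\oti_{A'}N)\oplus(X\oti_{B'}Y)\cong A\oplus B=A\boxplus B$. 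The reverse composite is handled identically, giving $A\boxplus B\sim A'\boxplus B'$.

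The main obstacle is the decoupling isomorphism in part a): all of the genuine content, and all of the braiding bookkeeping, sits in verifying that the $A'\boxtimes B'$-balancing idempotent on $M\oti X\oti N\oti Y$ really does factor through the product of the separate $A'$- and $B'$-balancing idempotents once the middle factors are braided past one another. The direct sum case is comparatively routine, the only point needing care being the vanishing of the off-diagonal relative tensor products, which follows from unitality of $A'\boxplus B'$ together with the orthogonality of its two block units.
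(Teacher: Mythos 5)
Your argument is correct and is precisely the approach the paper has in mind: the paper's entire proof is the remark that the claim ``follows straightforwardly from the definition of a Morita context in terms of interpolating bimodules,'' and you have simply supplied the details, constructing $M\boxtimes X$ and $M\boxplus X$ from the interpolating bimodules and verifying invertibility via the decoupling of the balancing idempotents. The only point worth flagging is that realising $\oti_{A'}$ as the image of the averaging idempotent uses that the algebras are (symmetric) special Frobenius, which is the standing assumption in this section, so nothing is lost.
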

\begin{proof}
Follows straightforwardly from the definition of a Morita context in terms of interpolating bimodules.
\end{proof}

~\\
If $[A]$ indicates the Morita class of $A$ we have in other words that $[A\boxtimes B]=[A'\boxtimes B']$ and $[A\boxplus B]=[A'\boxplus B']$.
The properties simple, special, symmetric and Frobenius are stable under Morita equivalence, as stated in
\begin{proposition}
	Let $A$ be a simple symmetric special Frobenius algebra in \cC, then any algebra in $[A]$ is also simple symmetric special Frobenius.
\end{proposition}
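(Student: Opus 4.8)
The plan is to move all the structure across a Morita context and to reduce the Frobenius/special/symmetric part to the non-degeneracy criterion already recalled above. Fix a Morita context realising $B\sim A$: an $A$-$B$-bimodule $X$ and a $B$-$A$-bimodule $Y$ together with bimodule isomorphisms $X\otimes_B Y\xrightarrow{\sim}A$ and $Y\otimes_A X\xrightarrow{\sim}B$ intertwining the multiplications. Such a context induces mutually inverse equivalences between the categories of left modules, of right modules, and of bimodules over $A$ and $B$; the induced bimodule equivalence $F$ sends $M\mapsto Y\otimes_A M\otimes_A X$, so in particular $F(A)=Y\otimes_A A\otimes_A X\cong Y\otimes_A X\cong B$ as $B$-$B$-bimodules.

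Simplicity is then immediate. An equivalence of categories is fully faithful, so $\End_{B|B}(B)=\End_{B|B}(F(A))\cong\End_{A|A}(A)$, which is one-dimensional by the hypothesis that $A$ is simple. Since $\cC$, and hence its category of $B$-bimodules, is abelian and semisimple, any object with one-dimensional endomorphism space is simple; therefore $\dim_\C\Hom_{B|B}(B,B)=1$ and $B$ is simple.

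For the symmetric special Frobenius structure I would not transport the whole $5$-tuple directly, but instead invoke the Lemma of \cite{FRS1} recalled above: it suffices to equip the algebra $(B,m_B,\eta_B)$ with a counit $\varepsilon_B\in\Hom(B,\one)$ for which the resulting algebra is non-degenerate, and the symmetric special Frobenius structure then exists and is unique. To produce such an $\varepsilon_B$, realise $B$ as the internal endomorphism algebra of $X$ regarded as a one-sided $A$-module (this is the content of the context isomorphism $Y\otimes_A X\cong B$). For $A$ symmetric special Frobenius in the ribbon category $\cC$, this internal End inherits a canonical counit, essentially a partial trace of $\varepsilon_A$ taken with the rigid duality of $\cC$, and the morphism $\Phi_B$ of \eqref{eq:Symm} can then be expressed through $\Phi_A$ and the duality of $X$.

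The main obstacle is this last step: showing that $\Phi_B$ is an isomorphism, i.e. that $(B,m_B,\eta_B,\varepsilon_B)$ is non-degenerate. This is the one place where the hypotheses genuinely enter: non-degeneracy of $A$ (equivalently specialness, via \eqref{eq:nondegcomp}) together with the sphericity and rigidity of $\cC$ ensure that the duality pairing descends to $X$ and that the induced trace over $X$ is non-degenerate, so that $\Phi_B$ inherits invertibility from $\Phi_A$. Everything else—simplicity, and the mere existence of an algebra structure on $B$—is formal Morita theory; it is only the non-degeneracy of the counit that uses that $\cC$ is ribbon and that $A$ is special rather than an arbitrary associative algebra.
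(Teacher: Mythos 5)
The paper itself gives no argument here: its entire proof is a citation of Proposition~2.13 of \cite{FRS2}, which establishes exactly that the properties simple, symmetric, special and Frobenius are Morita invariant. Your proposal is an attempt to reconstruct the content of that cited result, and its overall shape is right: simplicity does transport along the equivalence of bimodule categories induced by the Morita context (full faithfulness gives $\dim_\C\Hom_{B|B}(B,B)=\dim_\C\Hom_{A|A}(A,A)=1$), and the correct strategy for the Frobenius part is indeed to produce a counit $\varepsilon_B$ making $(B,m_B,\eta_B,\varepsilon_B)$ non-degenerate and then invoke Lemma~3.7\,b) of \cite{FRS1} to get the symmetric special Frobenius structure for free.

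The genuine gap is that the one step carrying all the weight --- the construction of $\varepsilon_B$ and the proof that the resulting $\Phi_B$ of \eqref{eq:Symm} is invertible --- is asserted rather than carried out. You say that ``the duality pairing descends to $X$ and the induced trace over $X$ is non-degenerate, so that $\Phi_B$ inherits invertibility from $\Phi_A$,'' but this is precisely the statement to be proved, and it is not formal: one must first identify $Y$ with $X^\vee$ as a $B$-$A$-bimodule (so that $Y\otimes_A X$ really is the internal End of $X$), check that the candidate counit, a partial trace of $\varepsilon_A$ over $X$, is well defined on the quotient/retract defining $\otimes_A$, and then verify invertibility of $\Phi_B$ using the splitting of the idempotent that defines $Y\otimes_A X$ inside $Y\otimes X$. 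None of these verifications appears in your sketch. A smaller but real inaccuracy: non-degeneracy of $(A,m,\eta,\varepsilon)$ is not ``equivalently specialness'' via \eqref{eq:nondegcomp}; that equation is the component form of non-degeneracy only for haploid algebras of the special form \eqref{eq:algdec}, and specialness is a separate (normalisation) condition that Lemma~3.7\,b) of \cite{FRS1} delivers as output, not input. As written, your argument establishes simplicity but for the rest amounts to a plausible outline of the proof of the result the paper cites, with the decisive non-degeneracy computation still missing.
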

\begin{proof}
Follows from Proposition $2.13$ in \cite{FRS2}
\end{proof}

~\\
Finally, we recall a corollary from \cite{Ostrik}
\begin{proposition}
Any simple symmetric special Frobenius algebra in \cC{} is Morita equivalent to a Haploid algebra.
\end{proposition}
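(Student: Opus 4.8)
The plan is to deduce the statement from Ostrik's classification of semisimple algebras up to Morita equivalence, exactly as the phrase ``a corollary from \cite{Ostrik}'' suggests, using the earlier results of the excerpt to bridge the two vocabularies. Conceptually, a symmetric special Frobenius algebra $A$ determines its category of right $A$-modules; Morita equivalence of two such algebras is the statement that their module categories are equivalent, and the haploid representatives correspond to the algebras of internal endomorphisms of a \emph{simple} object of that module category. So the whole argument amounts to producing such a simple object, which is precisely what Ostrik's theorem does.

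First I would observe that a simple $A$ is in particular indecomposable as an algebra: by definition $\dim_\C\Hom_{A|A}(A,A)=1$, so $A$ admits no non-trivial idempotent bimodule endomorphism and hence no splitting $A\cong A_1\boxplus A_2$ into a direct sum of algebras. Next, repeating the step already used in the proof of the earlier proposition (the one producing a haploid subalgebra when $Z(A)$ is non-diagonal), Lemma~5.23 of \cite{FuS} turns the specialness of $A$ into semisimplicity in the sense of \cite{Ostrik}. With $A$ both indecomposable and semisimple, the corollary to Theorem~1 of \cite{Ostrik} applies verbatim and yields a haploid non-degenerate algebra $B$ in the Morita class of $A$.

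It then remains to upgrade $B$ to the required structure and to confirm that nothing is lost. Since $B$ is haploid and non-degenerate, Lemma~3.7 b) of \cite{FRS1} endows $B$ with a unique symmetric special Frobenius structure, with $\varepsilon=d_B\,r_{\one\prec B}$ as noted after that lemma; simplicity of $B$ is then inherited from $A$ by the Morita-invariance of the simple, special, symmetric and Frobenius properties recorded in the propositions above. Thus $B$ is the desired haploid algebra Morita equivalent to $A$. The only real obstacle is the dictionary itself: one must be certain that ``simple symmetric special Frobenius'' on our side matches ``indecomposable semisimple'' on Ostrik's side, and that the non-degenerate haploid output of his corollary carries, via Lemma~3.7 b), the full symmetric special Frobenius structure within the same Morita class. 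Once this translation is pinned down the result is immediate, the mathematical substance residing entirely in \cite{Ostrik}, with \cite{FuS} and \cite{FRS1} supplying the semisimplicity and the reconstruction of the Frobenius structure respectively.
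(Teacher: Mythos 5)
Your proposal is correct and follows essentially the same route as the paper: the paper simply recalls this statement as a corollary of Ostrik's Theorem~1, and the chain you spell out (specialness gives semisimplicity via Lemma~5.23 of \cite{FuS}, simplicity gives indecomposability, Ostrik's corollary gives a haploid non-degenerate Morita representative, and Lemma~3.7~b) of \cite{FRS1} restores the symmetric special Frobenius structure) is exactly the argument the paper itself uses in its earlier proposition on algebras with non-diagonal $Z(A)$. Nothing is missing; you have merely made explicit the translation the paper leaves implicit.
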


Using these results
 we introduce a structure of a unital rig on the set of Morita classes of symmetric special Frobenius algebras in \cC.
\begin{definition}
The {\em Frobenius rig} $\F_\cC$ of \cC{} is the rig generated by the set $$\{[A]|\textrm{$A$ is a simple special Frobenius algebra}\}$$
with
\begin{itemize}
\item addition: $[A]+[B]\equiv[A\boxplus B]$
\item multiplication: $[A]\times[B]\equiv[A\boxtimes B]$
\item unit: $[\one]$
\end{itemize}
\end{definition}

\begin{remark}
~\\[-2ex]
	\begin{itemize}
	\item[(i)] A closely related concept for subfactors was introduced in \cite{EvPinto} under the name {\em fusion of modular invariants}. The name makes sense since, as we shall see later, at least in the $\mathfrak{su}(2)$ case, all the information of this rig can be found in the coefficients $Z_{ij}$.
	\item[(ii)] Checking the necessary properties of the addition and multiplication requires checking
	that the
	bicategory $\mathcal{B}im(\cC)$ of symmetric special Frobenius algebras, with morphism categories the categories of bimodules,
	is a {\em monoidal} semisimple bicategory. We refer to a later publication for the proof~\cite{FS}.
	\item[(iii)] By the Grothendieck construction, the rig $\F_\cC$ becomes a ring. This ring has some right to be called the Grothendieck ring of $\mathcal{B}im(\cC)$, although that notion for a bicategory has been used for a different structure.
	\end{itemize}
\end{remark}
If it happens that the modular invariant torus partition functions, $Z_{ij}(A)$, with $Z_{00}(A)=1$ are in bijection with the Morita classes of simple symmetric Frobenius algebras in \cC{}, then from the propositions above it follows that the matrices $Z_{ij}(A)$ for different $A$ form a faithful matrix representation of $\F_\cC$.\\[1ex]

\noindent
By classifying module categories over the categories $\cC_k$, the Morita classes of simple symmetric special Frobenius algebras were classified in \cite{KO, Ostrik}. The result is in precise agreement with the ADE classification of modular invariant torus partition functions in \cite{CaItZu}, so $\F_{\cC_k}$ can be determined from $Z_{ij}(A)$ for different $A$. Denote the Morita classes of algebras of types $A$, $D$, $E$ by $[A]$, $[D]$, $[E]$. The rig $\F_{\cC_k}$ is then commutative, has $[A]$ as unit,
 and takes the following form~\cite{EvPinto}.\\[1ex]

\begin{itemize}
	\item $k=0$ mod $4$: $[D]\times[D]  =  2[D]$
	\item $k=2$ mod $4$: $[D]\times[D]  =  [A]$
	\item $k=10$: $[D]\times[E]  =  [E]$, $[E]\times[E] = 2[E]$
	\item $k=16$: $[D]\times[E]  =  2[E]$, $[E]\times[E] = [D]+[E]$
	\item $k=28$: $[D]\times[E]  =  2[E]$, $[E]\times[E] = 4[E]$
\end{itemize}

The propositions above, together with the structure of $\F_{\cC_k}$, prove the following
\begin{proposition}\label{prop:PnFnfusion}
In the category $\cC_k$, let $D$and $E$ denote the simple symmetric special Frobenius algebras corresponding to the $D$-type resp. $E$-type modular invariant torus partition functions as given in table~\ref{tab:ADEclass}. Let $d_D$ and $d_E$ be the (non-zero) quantum dimensions of the underlying objects.
For any genus $g$, the endomorphisms $P_g$ satisfy
\begin{align}
	P_g[D]\circ P_g[D]& =  \left(2d_D^{-1}\right)^{-\chi(\extS_g)/2}2P_g[D] & \text{for }k=0\ \mathrm{mod}\ 4\\
	P_g[D]\circ P_g[D]& =  d_D^{\chi(\extS_g)}\id_{\bl(\extS_g)} & \text{for } k=2\ \mathrm{mod}\ 4\\
	\begin{split}
		P_g[D]\circ P_g[E]& =d_D^{\chi(\extS_g)/2}P_g[E]\\
		P_g[E]\circ P_g[E]& = \left(2d_E^{-1}\right)^{-\chi(\extS_g)/2}2P_g[E]
	\end{split}
	& \text{for } k=10\\
	\begin{split}
		P_g[D]\circ P_g[E]& = \left(2d_D^{-1}\right)^{-\chi(\extS_g)/2}2P_g[E]\\
		P_g[E]\circ P_g[E]&= \left[(d_D+d_E)d_E^{-2}\right]^{-\chi(\extS_g)/2}\left(P_g[D] + P_g[E]\right)
	\end{split}
	& \text{for } k=16\\
	\begin{split}
		P_g[D]\circ P_g[E]& = \left(2d_E^{-1}\right)^{-\chi(\extS_g)/2}2P_g[E]\\
		P_g[E]\circ P_g[E]& = \left(4d_E^{-1}\right)^{-\chi(\extS_g)/2}4P_g[E]
	\end{split}
	& \text{for } k=28
\end{align}
\end{proposition}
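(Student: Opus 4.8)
The plan is to derive every identity from three facts already in hand: the behaviour of $P_g$ under the two algebra operations (Proposition~\ref{prop:Pmultadd}), its transformation under Morita equivalence (Proposition~\ref{prop:PMorita}), and the explicit multiplication table of the Frobenius rig $\F_{\cC_k}$ recorded just above. Each displayed line corresponds to a single product relation $[X]\times[Y]=\sum_Z n_Z[Z]$ of $\F_{\cC_k}$, where the representatives $X,Y,Z$ are taken among $\one,D,E$ as in table~\ref{tab:ADEclass} and $n_Z\in\{1,2,4\}$, and every such line is proved by the same three moves.

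First I would rewrite the left side using Proposition~\ref{prop:Pmultadd}a, so that $P_g[X]\circ P_g[Y]=P_g[X\boxtimes Y]$. The rig relation says exactly that the concrete algebra $X\boxtimes Y$ is Morita equivalent to the concrete algebra $W$ obtained as the $\boxplus$-sum of $n_Z$ copies of each $Z$; that the product and sum descend to Morita classes in the first place is the content of Proposition~\ref{prop:Moritastability}. Both $X\boxtimes Y$ and $W$ are symmetric special Frobenius, so Proposition~\ref{prop:PMorita} applies and gives $P_g[X\boxtimes Y]=\gamma^{-\chi(\extS_g)/2}P_g[W]$ with $\gamma=\dim(W)/\dim(X\boxtimes Y)$. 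Finally, iterating Proposition~\ref{prop:Pmultadd}b expands $P_g[W]=\sum_Z n_Z P_g[Z]$, which is the right side of the asserted identity up to the scalar $\gamma^{-\chi(\extS_g)/2}$.

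What remains is to evaluate $\gamma$ in each case, which is elementary: the underlying objects of $X\boxtimes Y$ and $W$ are $X\otimes Y$ and $\oplus_Z Z^{\oplus n_Z}$, so the quantum dimension is multiplicative under $\boxtimes$ and additive under $\boxplus$, yielding $\dim(X\boxtimes Y)=d_Xd_Y$ and $\dim(W)=\sum_Z n_Z d_Z$, with $d_\one=1$ and $d_D,d_E$ read off the objects in table~\ref{tab:ADEclass}. For instance $[D]\times[D]=2[D]$ gives $\gamma=2d_D/d_D^2=2d_D^{-1}$ and hence $P_g[D]\circ P_g[D]=(2d_D^{-1})^{-\chi(\extS_g)/2}2P_g[D]$; while $[D]\times[D]=[A]=[\one]$ (the case $k\equiv 2$ mod $4$) gives $\gamma=1/d_D^2$ and, because $W=\one$ forces $P_g[\one]=\id_{\bl(\extS_g)}$, the identity $P_g[D]\circ P_g[D]=d_D^{\chi(\extS_g)}\id_{\bl(\extS_g)}$. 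The remaining relations for $k=10,16,28$ are handled identically by reading the coefficients off $\F_{\cC_k}$.

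I do not anticipate a genuine obstacle: the statement is a bookkeeping consequence of the three cited propositions, and the substance lies in them rather than here. The two points that need a little care are, first, the identity $P_g[\one]=\id_{\bl(\extS_g)}$, which holds because labelling the dual triangulation with the trivial algebra $\one$ makes the inserted ribbon graph collapse, so that $\eCob[\extS_g,\one,T]$ is the identity cobordism and its invariant is $\id_{\bl(\extS_g)}$; and second, keeping the orientation of the ratio in $\gamma=\dim(W)/\dim(X\boxtimes Y)$ consistent with the normalisation of Proposition~\ref{prop:PMorita}, since inverting it would negate every exponent and spoil each prefactor. With these conventions fixed, reading the dimensions off table~\ref{tab:ADEclass} reproduces the listed scalars line by line.
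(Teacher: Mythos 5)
Your proposal is correct and is essentially the paper's own argument: the paper disposes of this proposition in one line (``The propositions above, together with the structure of $\F_{\cC_k}$, prove the following''), and the three moves you spell out --- $P_g[X]\circ P_g[Y]=P_g[X\boxtimes Y]$ from Proposition~\ref{prop:Pmultadd}, the rig relation plus Proposition~\ref{prop:Moritastability} to identify the Morita class of $X\boxtimes Y$ with $W=\boxplus_Z Z^{\boxplus n_Z}$, and Proposition~\ref{prop:PMorita} with $\gamma=\dim(W)/\dim(X\boxtimes Y)$ followed by additivity of $P_g$ under $\boxplus$ --- are exactly what that sentence leaves implicit. One remark: carried through faithfully, your computation gives the prefactor $\left(2d_D^{-1}\right)^{-\chi(\extS_g)/2}$ for the $[D]\times[E]=2[E]$ relation at $k=28$, whereas the statement prints $\left(2d_E^{-1}\right)^{-\chi(\extS_g)/2}$; since $d_D=2$ but $d_E\neq 2$ at $k=28$, this points to a typo in the proposition rather than a flaw in your argument.
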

~\\

When $k=0$ mod $4$, the endomorphisms
\be{Proj0}
	\begin{split}
		\Pi^{g,k,D}_{+}& \equiv\frac{\left(2d_D^{-1}\right)^{\chi(\extS_g)/2}}{2}P_g[D]\\
		\Pi^{g,k,D}_{-}& \equiv \id_{\bl(\extS_g)}-P_g^{D+}
	\end{split}
\ee
are idempotent.
For $k=2$ mod $4$, corresponding idempotents are defined by
\be{Proj2}
	\Pi^{g,k,D}_\pm\equiv\frac{1}{2}\left(\id_{\bl(\extS_g)}\pm d_D^{-\chi(\extS_g)/2}P_g[D]\right).
\ee
 Furthermore, since $P_g[D]$ is not proportional to the identity, the rank of $\Pi^{g,k,D}_{\pm}$ does not vanish for any $g\geq 1$ and $k\in 2\Z_+$.\\
 For $k=10, 16, 28$ there are additional idempotents given by
 \begin{align}
 	\begin{split}
 		\Pi^{g,10,E}_+ & \equiv \frac{\left( 2d_E^{-1}\right)^{\chi(\extS_g)/2}}{2}P_g[E]\\
		\Pi^{g,10,E}_- & \equiv \id_{\bl(\extS_g)}-\Pi^{g,10,E}_+
	\end{split}
	& \text{for $k=10$}\\
	\begin{split}
		\Pi^{g,16,E}_+ & \equiv \frac{\left(2d_D^{-1}\right)^{\chi(\extS_g)/2}}{\sqrt{16\gamma+\gamma^2}}\left(\left(\sqrt{16\gamma+\gamma^2}-\gamma/4\right)\Pi^{g,16,D}_++P_g[E]\right)\\
		\Pi^{g,16,E}_- & \equiv  \id_{\bl(\extS_g)}-\Pi^{g,16,E}_+
	\end{split}
	& \text{for $k=16$}\\
	\begin{split}
		\Pi^{g,28,E}_+ & \equiv \frac{\left( 4d_E^{-1}\right)^{\chi(\extS_g)/2}}{4}P_g[E]\\
		\Pi^{g,28,E}_- & \equiv  \id_{\bl(\extS_g)}-\Pi^{g,28,E}_+
	\end{split}
	& \text{for $k=28$}
\end{align}
where $\gamma=\left[(d_D+d_E)d_E^{-2}\right]^{-\chi(\extS_g)/2}$.
Similarly as above we conclude that $\Pi^{g,10,E}_{\pm}$ and $\Pi^{g,28,E}_\pm$ do not have vanishing rank. An explicit check confirms that also the ranks of $\Pi^{g,16,E}_\pm$ are non-vanishing.
Using proposition~\ref{prop:PnFnfusion} it is easily shown that
\begin{align}
	\Pi^{g,k,D}_+\Pi^{g,k,E}_+ &= \Pi^{g,k,E}
\end{align}
for $k=10, 16, 28$.
Furthermore, explicit calculations confirm that $\Pi^{g,k,D}_+\Pi^{g,k,E}_-$ has non-vanishing rank for the same values of $k$.

Combining these results, we have shown
theorem \ref{thm:su2}.

\begin{remark}
	It is true that theorem~\ref{thm:reducibility} and the methods used in the proof already imply that the representations considered in theorem~\ref{thm:su2} decompose in direct sums of two sub-representations. Theorem~\ref{thm:su2} contains more information than this, however. First, it shows the non-trivial decomposition in three sub-representations in the presence of the exceptional series of algebras. Second,  the projectors allow us to determine the dimensions of the subrepresentations. It is for instance straightforward to determine the dimensions of the sub-representations. As a simple example we get $\dim(V^{1,4n,D}_+)=n+1$, $\dim(V^{1,4n,D}_-)=3n$, $\dim(V^{1,4n+2,D}_+)=n+1$, $\dim(V^{1,4n,D}_+)=3n+2$, $\dim(V^{1,10,E}_-)=5$, $\dim(W^{1,10})=3$.
\end{remark}


\begin{thebibliography}{99}

\bibitem[A1]{A1} J.~E. Andersen.  \newblock Asymptotic
  faithfulness of the quantum {${\text{SU}}(n)$} representations of
  the mapping class groups.  \newblock {\em Ann. of Math. (2)},
  163(1):347--368, 2006.

\bibitem[A2]{A2} J.~E. Andersen.  \newblock Mapping class groups do
  not have {K}azhdan's property (\uppercase{T}).  \newblock
  arXiv:math.QA/0706.2184.
  
\bibitem[A3]{A3} J.~E. Andersen.  \newblock {T}oeplitz operators and
  {H}itchin's connection.  \newblock In {\em The many facets of
    geometry: A tribute to Nigel Hitchin}.  Oxford University Press.
  \newblock To appear (2008).
  
\bibitem[ADW]{ADW} 
S.~Axelrod, S.~Della~Pietra and E.~Witten, 
{\em Geometric quantization of Chern Simons gauge theory}, 
J.Diff.Geom. {\bf 33} (1991) 787--902.

\bibitem[AM]{AM} J. E. Andersen \& G. Masbaum, {\em Involutions on moduli spaces and refinements of the Verlinde formula}, Math. Ann. {\bf 314(2)}, 291--326 (1999).

\bibitem[At]{Atiyah}
  M.~Atiyah,
  {\em Topological Quantum Field Theories},
  Inst.\ Hautes Etudes Sci.\ Publ.\ Math.\  {\bf 68} (1989) 175.

\bibitem[Ba]{Bantay}
  P.~Bantay,
  {\em The kernel of the modular representation and the Galois action in RCFT},
  Commun.\ Math.\ Phys.\  {\bf 233} (2003) 423
  [arXiv:math/0102149].
  
\bibitem[Bl]{B1}C. Blanchet, 
{\em A spin decomposition of the Verlinde formulas for type A modular categories}, 
Comm. Math. Phys. {\bf 257} (2005), no. 1, 1--28.
  
\bibitem[BE1]{BE}
  J.~B\"ockenhauer and D.~E.~Evans,
  {\em Modular invariants, graphs and alpha-induction for nets of subfactors.
  II},
  Commun.\ Math.\ Phys.\  {\bf 200} (1999) 57
  [arXiv:hep-th/9805023].
  
 \bibitem[BE2]{BE2}
  J.~B\"ockenhauer and D.~E.~Evans,
  {\em Modular invariants, graphs and alpha-induction for nets of subfactors.
  III},
  Commun.\ Math.\ Phys.\  {\bf 205} (1999) 183
  [arXiv:hep-th/9812110].

 
\bibitem[BHMV1]{BHMV1} C. Blanchet, N. Habegger, G. Masbaum \&
P. Vogel, "Three-manifold invariants derived from the Kauffman Bracket."
Topology {\bf 31} (1992), 685-699.

\bibitem[BHMV2]{BHMV2} C. Blanchet, N. Habegger, G. Masbaum \&
P. Vogel, "Topological Quantum Field Theories derived from the
Kauffman bracket." Topology {\bf 34} (1995), 883-927.

\bibitem[CG]{CosteGannon}
  A.~Coste and T.~Gannon,
  {\em Congruence subgroups and rational conformal field theory},
  [arXiv:math/9909080].

\bibitem[CK]{Kerler}
	Q. Chen and T. Kerler,
	{\em Higher Rank TQFT Representations of $SL(2,\Z)$ are Reducible},
	[arXiv:0706.3734].
  
\bibitem[CIZ]{CaItZu}
  A.~Cappelli, C.~Itzykson and J.~B.~Zuber,
  {\em The ADE Classification of Minimal and A1(1) Conformal Invariant Theories},
  Commun.\ Math.\ Phys.\  {\bf 113} (1987) 1.

\bibitem[EP]{EvPinto}
  D.~E.~Evans and P.~R.~Pinto,
  {\em Subfactor realisation of modular invariants},
  Commun.\ Math.\ Phys.\  {\bf 237} (2003) 309,
  [arXiv:math/0309174].

\bibitem[FRS1]{FRS1}
	 J.~Fuchs, I.~Runkel and C.~Schweigert,	
	 {\em TFT construction of RCFT correlators. I: Partition functions},
	 Nucl.\ Phys.\  B {\bf 646} (2002) 353
	 [arXiv:hep-th/0204148].

\bibitem[FRS2]{FRS2}
	J.~Fuchs, I.~Runkel and C.~Schweigert,
	{\em TFT construction of RCFT correlators. II: Unoriented world sheets},
	Nucl.\ Phys.\  B {\bf 678} (2004) 511
	[arXiv:hep-th/0306164].

\bibitem[FRS3]{FRS3}
	J.~Fuchs, I.~Runkel and C.~Schweigert,
	{\em TFT construction of RCFT correlators. III: Simple currents},
	Nucl.\ Phys.\  B {\bf 694} (2004) 277
	[arXiv:hep-th/0403157].
	
\bibitem[FjFRS1]{FjFRS1}
	J.~Fjelstad, J.~Fuchs, I.~Runkel and C.~Schweigert,
	{\em TFT construction of RCFT correlators. V: Proof of modular invariance  and factorisation},
	Theor.\ Appl.\ Categor.\  {\bf 16} (2006) 342
	[arXiv:hep-th/0503194].
	
\bibitem[FjFRS2]{FjFRS2}
	J.~Fjelstad, J.~Fuchs, I.~Runkel and C.~Schweigert,
	{\em Uniqueness of open/closed rational CFT with given algebra of open states},
	to appear in Adv.Theor.Math.Phys. vol.12, issue 6, 2008,
	[arXiv:hep-th/0612306].

\bibitem[FrFRS1]{FrFRS}
	J.~Fr\"ohlich, J.~Fuchs, I.~Runkel and C.~Schweigert,
	{\em Correspondences of ribbon categories},
	Adv.\ Math.\  {\bf 199} (2006) 192
	[arXiv:math/0309465].
	
\bibitem[FrFRS2]{FrFRSduality}
  J.~Fr\"ohlich, J.~Fuchs, I.~Runkel and C.~Schweigert,
  {\em Duality and defects in rational conformal field theory},
  Nucl.\ Phys.\  B {\bf 763} (2007) 354
  [arXiv:hep-th/0607247].

\bibitem[FjS]{FS}
	J.~Fjelstad and U.~Schreiber,
	work in progress.
	
\bibitem[FrS]{FrS}
  D.~Friedan and S.~H.~Shenker,
  {\em The Analytic Geometry of Two-Dimensional Conformal Field Theory},
  Nucl.\ Phys.\  B {\bf 281} (1987) 509.

	
\bibitem[FuS]{FuS}
  J.~Fuchs and C.~Schweigert,
  {\em Category theory for conformal boundary conditions},
  [arXiv:math/0106050].

\bibitem[G]{Gannon}
  T.~Gannon,
  {\em The Classification of affine SU(3) modular invariant partition functions},
  Commun.\ Math.\ Phys.\  {\bf 161} (1994) 233
  [arXiv:hep-th/9212060].

	
\bibitem[GQ]{GepQ}
  D.~Gepner and Z.~a.~Qiu,
  {\em Modular Invariant Partition Functions for Parafermionic Field Theories},
  Nucl.\ Phys.\  B {\bf 285} (1987) 423.
  
\bibitem[H]{H1} 
N.~Hitchin, 
{\em Flat connections and geometric quantization}, 
Comm.Math.Phys., {\bf 131} (1990) 347--380.

\bibitem[KrS]{KrS}
  M.~Kreuzer and A.~N.~Schellekens,
  {\em Simple currents versus orbifolds with discrete torsion: A Complete
  classification},
  Nucl.\ Phys.\  B {\bf 411} (1994) 97
  [arXiv:hep-th/9306145].


\bibitem[KiO]{KO}
  A.~J.~Kirillov and V.~Ostrik,
  {\em On q-analog of McKay correspondence and ADE classification of sl(2)
  conformal field theories},
 [arXiv:math/0101219].
  
\bibitem[MS]{MS} G. Moore and N. Seiberg, {\em Classical and quantum
conformal field theory}, Comm. Math. Phys. {\bf 123} (1989), 177
-- 254.

\bibitem[Oc]{Ocneanu}
  A.~Ocneanu,
  {\em The classification of subgroups of quantum SU(N)}.


\bibitem[Os]{Ostrik}
	V.~Ostrik,
	{\em Module categories, weak Hopf algebras and modular invariants}
	[arXiv:math/0111139].

\bibitem[R]{Rob}
	J.~Roberts,
	{\em Irreducibility of some quantum representations of mapping class groups},
	[arXiv:math/9909128v1]
  
\bibitem[RT1]{RT1} N. Reshetikhin and V. Turaev, {\em Ribbon graphs and
their invariants derived fron quantum groups}, Comm. Math. Phys.
{\bf 127} (1990), 1--26.

\bibitem[RT2]{RT2} N. Reshetikhin and V. Turaev, {\em Invariants of
$3$-manifolds via link polynomials and quantum groups}, Invent.
Math. {\bf 103} (1991), 547--597.

\bibitem[S]{Segal}
  G.~B.~Segal,
  {\em The Definition of Conformal Field Theory},
{ In COMO 1987, proceedings, Differential Geometrical Methods in Theoretical Physics 165-171. }

\bibitem[T]{T1}
  V.~G.~Turaev,
  {\em Quantum invariants of knots and three manifolds},
  De Gruyter Stud.\ Math.\  {\bf 18} (1994) 1.

\bibitem[V]{V1} E. Verlinde, {\em Fusion rules and modular transformations in $2$D conformal field theory}, Nuclear Phys. B {\bf 300} (1988), no. 3, 360--376.

\bibitem[W]{W1} E. Witten, {\em Quantum field theory and the Jones
polynomial.}, Commun. Math. Phys {\bf 121} (1989) 351--98.


\end{thebibliography}
\end{document}